\theoremstyle{plain}
\newtheorem{theorem}{Theorem}
\newtheorem{conjecture}[theorem]{Conjecture}
\newtheorem{corollary}[theorem]{Corollary}
\newtheorem{definition}[theorem]{Definition}
\newtheorem{lemma}[theorem]{Lemma}
\newtheorem{proposition}[theorem]{Proposition}
\theoremstyle{definition}
\newtheorem{example}[theorem]{Example}
\newtheorem{remark}[theorem]{Remark}
\numberwithin{equation}{section}
\numberwithin{theorem}{section}
\newcommand{\vol}{\mathrm{vol}}
\newcommand{\dive}{\operatorname{div}}
\renewcommand{\div}{\operatorname{div}}
\newcommand{\Hess}{\operatorname{Hess}}
\newcommand{\rr}{\mathbb{R}}
\newcommand{\nn}{\mathbb{N}}
\newcommand{\riem}{\operatorname{Riem}}
\newcommand{\ric}{\operatorname{Ric}}
\newcommand{\supp}{\operatorname{supp}}
\newcommand{\tr}{\operatorname{trace}}
\newcommand{\dv}{\,\mathrm{dv}}
\newcommand{\dx}{\,\mathrm{d}x}
\newcommand{\sgn}{\operatorname{sgn}}
\def\XXint#1#2#3{{\setbox0=\hbox{$#1{#2#3}{\int}$}
     \vcenter{\hbox{$#2#3$}}\kern-.5\wd0}}
\newcommand{\inj}{\mathrm{inj}}
\newcommand{\diam}{\mathrm{diam}}
\newcommand{\dist}{\mathrm{dist}}
\renewcommand{\a}{\alpha}
\newcommand{\ve}{\varepsilon}
\renewcommand{\l}{\lambda}
\newcommand{\vp}{\varphi}
\newcommand{\CD}{\mathcal{D}}
\newcommand{\CH}{\mathcal{H}}
\newcommand{\CL}{\mathcal{L}}
\newcommand{\B}{\mathscr{B}}
\newcommand{\C}{\mathscr{C}}
\newcommand{\R}{\mathscr{R}}
\newcommand{\norm}[1]{\left\|#1\right\|}
\newcommand{\ps}[2]{g\left(#1,#2\right)}
\newcommand{\ton}[1]{\left(#1\right)}
\newcommand{\qua}[1]{\left[#1\right]}
\newcommand{\cur}[1]{\left\{#1\right\}}
\newcommand{\abs}[1]{\left|#1\right|}
\renewcommand{\B}[2]{B_{#1}\ton{#2}}
\renewcommand{\R}{\mathbb{R}}
\begin{document}

\title[Approximation, regularity and positivity preservation]{Approximation, regularity and positivity preservation on Riemannian manifolds}
\author{Stefano Pigola}
\address{Universit\`a degli Studi di Milano-Bicocca\\ Dipartimento di Matematica e Applicazioni \\ Via Cozzi 55, 20126 Milano - Italy - EU}
\email{stefano.pigola@unimib.it}
\author{Daniele Valtorta}
\address{Universit\`a degli Studi di Milano-Bicocca\\ Dipartimento di Matematica e Applicazioni \\ Via Cozzi 55, 20126 Milano - Italy - EU}
\email{daniele.valtorta@unimib.it}
\author{Giona Veronelli}
\address{Universit\`a degli Studi di Milano-Bicocca\\ Dipartimento di Matematica e Applicazioni \\ Via Cozzi 55, 20126 Milano - Italy - EU}
\email{giona.veronelli@unimib.it}
\date{\today}

\begin{abstract}
The paper focuses on the $L^{p}$-Positivity Preservation property ($L^{p}$-PP for short) on a Riemannian manifold $(M,g)$. It states that any $L^p$ function $u$ with $1<p<+\infty$, which solves $(-\Delta + 1)u\ge 0$ on $M$ in the sense of distributions must be non-negative. Our main result is that the $L^{p}$-PP holds if (the possibly incomplete) $M$ has a finite number of ends with respect to some compact domain, each of which is $q$-parabolic for some, possibly different, values $2p/(p-1) < q \leq +\infty$. When $p=2$, since $\infty$-parabolicity coincides with geodesic completeness, our result settles in the affirmative a conjecture by M. Braverman, O. Milatovic and M. Shubin in 2002. On the other hand, we also show that the $L^{p}$-PP is stable by removing from a complete manifold a possibly singular set with Hausdorff co-dimension strictly larger than $2p/(p-1)$ or with a uniform Minkowski-type upper estimate of order $2p/(p-1)$. The threshold value $2p/(p-1)$ is sharp as we show that when the Hausdorff co-dimension of the removed set is strictly smaller, then the $L^{p}$-PP fails. This gives a rather complete picture. The tools developed to carry out our investigations include smooth monotonic approximation and consequent regularity results for subharmonic distributions, a manifold version of the Brezis-Kato inequality, Liouville-type theorems in low regularity, removable singularities results for $L^{p}$-subharmonic distributions and a Frostman-type lemma.

Since the seminal works by T. Kato, the $L^{p}$-PP has been linked to the spectral theory of Schr\"odinger operators with singular potentials $\Delta - V$. Here we present some applications of the main results of this paper to the case where $V\in L^p_{loc}$, addressing the essential self-adjointness of the operator when $p=2$ and whether or not $C^\infty_c(M)$ is an operator core for $\Delta-V$ in $L^p$.

\end{abstract}

\maketitle
\tableofcontents

\section{Introduction and main results}

\subsection{Basic notation}

Let $(M,g)$ be a connected, possibly incomplete, $n$-dimensional Riemannian manifold, $n \geq 2$, endowed with its Riemannian measure $\dv$. Unless otherwise specified, integration will be always performed with respect to this measure. The Riemannian metric $g$ gives rise to the intrinsic distance $\dist(x,y)$ between a couple of points $x,y \in M$. The corresponding open metric ball centered at $o \in M$ and of radius $R>0$ is denoted by $B_{R}(o)$. The $\riem$ and $\ric$ symbols are used to denote, respectively, the Riemann and the Ricci curvature tensors of $(M,g)$. Finally, the Laplace-Beltrami operator of $(M,g)$ is denoted by $\Delta = \tr \Hess = \div \nabla$. We stress that we are using the sign convention according to which, on the real line, $\Delta = +\frac{d^{2}}{dx^{2}}$.\smallskip

This paper deals with sub-solutions of elliptic PDEs involving the Schr\"odinger operator
\[
\CL = \Delta - \l(x)
\]
where $\l(x)$ is a smooth function.

We say that $u \in L^{1}_{loc}(M)$ is a {\it distributional solution} of $\CL u \geq f  \in L^{1}_{loc}(M)$ if, for every $0 \leq \vp \in C^{\infty}_{c}(M)$,
\[
\int_{M}u \CL\vp \geq \int_{M}f \vp.
\]
Sometimes, we will call such a $u$ a {\it distributional subsolution} of the equation $\CL u = f$. The notion of {\it distributional supersolution} is defined by reversing the inequalities and we say that $u \in L^{1}_{loc}(M)$ is a {\it distributional solution} of $\CL u = f$ if it is a subsolution and a supersolution at the same time.\smallskip

In the presence of more local regularity of the function involved we can also speak of a weak solution of the same inequality. Namely,  $u \in W^{1,1}_{loc}(M)$ is a {\it weak solution} of $\CL u \geq f \in L^{1}_{loc}(M)$ if, for every $0 \leq \vp \in C^{\infty}_{c}(M)$, it holds
\[
- \int_{M}  g(\nabla u , \nabla \vp)    \geq \int_{M} (\l + f) \vp
\]
By a density argument, the inequality can be extended to test functions $0 \leq \vp \in W^{1,\infty}_{c}(M)$. If the regularity of $u$ is increased to $W^{1,2}_{loc}(M)$ then test functions can be taken in $W^{1,2}_{c}(M)$.

Finally, we need to recall that  a function $u \in W^{1,1}_{loc}(M)$ is a distributional solution of $\CL u \geq f \in L^{1}_{loc}$ if and only if it is a weak solution of the same inequality.

\subsection{The BMS conjecture}

This paper and its companion \cite{GPSV-preprint} originate from the unpublished preprint \cite{PV1} by two of the authors. The main goal of the present paper is to expand the investigation of the BMS conjecture on possibly incomplete Riemannian manifolds. In \cite{GPSV-preprint} a generalization of the BMS conjecture, that relies on a completely new notion of distributional subsolutions, is obtained in the much broader setting of complete metric measure spaces.

BMS conjecture was introduced in \cite[Appendix B]{BMS}, and it is concerned with the $L^p$-positivity preserving property for Riemannian manifolds. We recall the definition of this property by B. G\"uneysu, \cite{Gu-JGEA}:

\begin{definition}
Let $1 \leq p \leq +\infty$. The Riemannian manifold $(M,g)$ is said to be $L^{p}$-Positivity Preserving ($L^{p}$-PP for short) if the following implication holds true:
\begin{gather}\label{P}\tag{$L^p\mathrm{-PP}$}
\begin{cases}
(- \Delta + 1) u \geq 0 \text{ distributionally on M}\\
u \in L^{p}(M)
\end{cases}
\Longrightarrow
u \geq 0\text{ a.e. on }M.
\end{gather}
More generally, one can consider any family of functions $\C \subseteq L^{1}_{loc}(M)$ and say that $(M,g)$ is $\C$-Positivity Preserving if the above implication holds when $L^{p}(M)$ is replaced by $\C$.
\end{definition}

The following conjecture, motivated by the study of self-adjointness of covariant Schr\"odinger operators (see the discussions in \cite{Gu-survey, Gu-book} and Section \ref{subsec:core} below) was formulated by M. Braverman, O. Milatovic and M. Shubin in \cite[Appendix B]{BMS}.

\begin{conjecture}[BMS conjecture]\label{BMS-conjecture}
Assume that $(M,g)$ is geodesically complete.  Then $(M,g)$ is $L^{2}$-positivity preserving.
\end{conjecture}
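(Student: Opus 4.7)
The strategy is to show that the negative part $u_- := \max(-u, 0)$ vanishes identically; since $u = u_+ - u_-$, this gives $u \geq 0$ a.e. The key reduction is the following Liouville-type statement: a nonnegative $L^2$ function $v$ satisfying $(\Delta - 1)v \geq 0$ on a complete manifold must vanish. To produce such a $v$ from $u$, I would invoke a distributional Kato-type inequality to conclude that $\Delta u_- \geq -\mathbf{1}_{\{u<0\}} \Delta u \geq u_-$ on $M$, so that $v = u_-$ fits the bill.

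Before the Kato inequality and the final energy estimate can be made rigorous, one needs extra regularity on $u$. Only the one-sided distributional information $\Delta u \leq u \in L^2_{loc}$ is a priori available, so $\Delta u$ is merely a signed Radon measure rather than an $L^p$ function. The natural remedy is to approximate $u$ by a smooth monotone sequence of distributional subsolutions of the same equation, apply local two-sided elliptic estimates to the approximants, and pass to the limit. This should yield $u \in W^{1,2}_{loc}(M)$, hence $u_- \in W^{1,2}_{loc}(M)$, together with the weak inequality
\[
-\int_M g(\nabla u_-, \nabla \vp) \geq \int_M u_- \vp \quad\text{for all } 0 \leq \vp \in W^{1,2}_c(M).
\]

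Completeness of $(M,g)$ supplies the second ingredient. The distance function $\rho(x) = \dist(x,o)$ is $1$-Lipschitz and defined on all of $M$, so composing with a one-variable cutoff yields $\chi_R \in W^{1,\infty}_c(M)$ with $\chi_R \equiv 1$ on $B_R(o)$, $\supp \chi_R \subset B_{2R}(o)$ and $|\nabla \chi_R| \leq 2/R$ a.e. Taking $\vp = \chi_R^2 u_- \in W^{1,2}_c(M)$ in the weak inequality above and absorbing the mixed term via Young's inequality gives
\[
\tfrac{1}{2}\int_M \chi_R^2 |\nabla u_-|^2 + \int_M \chi_R^2 u_-^2 \leq 2\int_M |\nabla \chi_R|^2 u_-^2 \leq \frac{8}{R^2}\|u_-\|_{L^2(M)}^2,
\]
and the right-hand side tends to $0$ as $R \to \infty$ since $u_- \in L^2(M)$. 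Hence $u_- \equiv 0$, and so $u \geq 0$ a.e.\ as required.

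The main obstacle I anticipate is the approximation/regularity step. Classical two-sided $L^p$-elliptic regularity is unavailable because $\Delta u$ is controlled only from above, and the global nature of the conclusion forces the approximation procedure to be compatible with arbitrary local geometry — no curvature or injectivity radius hypothesis is allowed. Constructing a monotone smooth approximation of $L^2$-subharmonic distributions on a general complete Riemannian manifold, and ensuring that both the Kato inequality and the weak form of $(\Delta - 1)u_- \geq 0$ survive in the limit, is the technical heart of the argument and precisely the tool advertised in the introduction.
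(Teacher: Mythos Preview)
Your overall strategy matches the paper's: smooth monotone approximation $\Rightarrow$ Brezis--Kato inequality $\Rightarrow$ regularity $\Rightarrow$ cutoff/Caccioppoli estimate exploiting completeness. The one correction needed is the order of the middle two steps. You propose to first obtain $u\in W^{1,2}_{loc}$ from the approximation and then apply Kato, but one-sided control $\Delta u\le u$ does \emph{not} force $u\in W^{1,2}_{loc}$: for instance $u(x)=|x|^{-1}$ on $\rr^{3}$ satisfies $\Delta u=-c\,\delta_{0}\le 0\le u$ and $u\in L^{2}_{loc}$, yet $|\nabla u|\notin L^{2}_{loc}$. The paper therefore reverses the order: the Brezis--Kato inequality (Proposition~\ref{prop-BK}) is proved directly for $L^{1}_{loc}$ functions via the approximation, yielding that $u_{-}$ is a \emph{nonnegative subharmonic} distribution; only then does the approximation-based Caccioppoli inequality (Theorem~\ref{prop:reg positive subharm}) give $u_{-}^{p/2}\in W^{1,2}_{loc}$, after which your cutoff computation (or the paper's equivalent Liouville argument, which uses only $\Delta u_{-}\ge 0$) goes through.
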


The validity of the BMS conjecture has been verified under additional restrictions on the geometry of the complete Riemannian manifold $(M,g)$. More precisely:
\begin{itemize}
\item In the seminal paper \cite[p.140]{Ka},  T. Kato proved that $\rr^{n}$ is $L^{2}$-PP.
 \item  In \cite[Proposition B.2]{BMS} it is assumed  that $(M,g)$ has $C^{\infty}$-bounded geometry, i.e., it satisfies $\| \nabla^{(j)}\riem \|_{L^{\infty}} < +\infty$ for any $j\in \nn$ and $\inj(M)>0$.
 \item  In \cite{Gu-JGEA}, B. G\"uneysu showed that $\ric \geq 0$ is sufficient for Conjecture \ref{BMS-conjecture} to be true. Subsequently, in \cite[Theorem XIV.31]{Gu-book}, he proved that if $\ric \geq -K^{2}$ then $(M,g)$ is $L^{p}$-PP on the whole scale $p \in [1,+\infty]$.
 \item  In \cite{BS}, D. Bianchi and A.G. Setti observed that the BMS conjecture is true even if the Ricci curvature condition is relaxed to
 \[
 \ric \geq - C (1+ r(x))^{2}
 \]
where $r(x) = \dist(x,o)$ for some origin $o \in M$. Under the same curvature assumptions, the $L^p$-PP can be extended almost directly to any $p\in[2,\infty)$, and with a little more effort to any $p\in[1,+\infty]$, \cite{Gu-book,MV}.
 \item  In the very recent \cite{MV}, L. Marini and the third author considered the case of a Cartan-Hadamard manifold (complete, simply connected with $\riem \leq 0$). In this setting it is proved that $(M,g)$ is $L^{p}$-PP for any $p \in [2,+\infty)$ provided
 \[
 -(m-1)B^{2}(1+r(x))^{\a +2} \leq \ric \leq -(m-1)^{2}A^{2}(1+ r(x))^{\a}
 \]
 for some $\a>0$ and  $B > \sqrt{2}(m-1)A>0$.
\end{itemize}

Kato's argument in $\rr^n$ relies on the positivity of the operator $(-\Delta + 1)^{-1}$ acting on the space of tempered distributions, which in turn is proved using the explicit expression of its kernel. Instead, in all the above quoted works on Riemannian manifolds, the proofs stem from an argument by B. Davies, \cite[Proposition B.3]{BMS} that relies on the existence of good cut-off functions with controlled gradient and Laplacian. Obviously, the construction of these cut-offs requires some assumption on the curvature.

\subsection{Riemannian manifolds and \texorpdfstring{$L^{p}$}{Lp}-Positivity Preservation} 

In this paper we prove some results on the $L^p$ positivity-preserving property for a Riemannian manifold $M$ and its link to completeness of $M$, the $p$-parabolicity of $M$, and the validity of the property on $M\setminus K$ in relation to the size of $K$.

The approach we use is somewhat different from the other results available in literature. In particular, our approach is based on a new \textsl{a priori} regularity result for positive subharmonic distributions (see Section \ref{sect:reg}), which permits to prove a Liouville type theorem and on a Brezis-Kato inequality on Riemannian manifolds (see Section \ref{sec:Kato}). Both these results, in turn, rely on a smooth monotonic approximation of distributional solutions of $\CL u \geq 0$ (see Section \ref{sect:approx}). This approximation can be proved using general potential-theoretic arguments, but in this paper we also present an explicit construction that uses the Riemannian Green's function as a sort of mollifier to smoothen any $L^1$ subsolution $u$.

This approach avoids any curvature restrictions on the manifold $M$, and allows us to prove that

\begin{theorem}\label{th:BMS}
 Let $(M,g)$ be a complete Riemannian manifold. Then $M$ is $L^{p}$-Positivity Preserving for every $p \in (1,+\infty)$. In particular, the BMS conjecture is true.
\end{theorem}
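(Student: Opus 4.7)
\textbf{Proof plan for Theorem \ref{th:BMS}.} My goal is to show that the negative part $v := u_{-} = \max(-u,0)$ of any $L^p$ distributional solution of $(-\Delta + 1)u \geq 0$ vanishes identically. Kato's inequality $\Delta |u| \geq \sgn(u)\Delta u$, combined with the pointwise bound $\Delta u \leq u \leq 0$ on the set $\{u \leq 0\}$, yields the distributional subequation
\[
\Delta v \geq v \quad \text{on } M,
\]
for the nonnegative function $v \in L^p(M)$. The task therefore reduces to a one-sided $L^p$-Liouville statement: on a complete manifold, every nonnegative $L^p$ distributional subsolution of $\Delta v = v$ must be identically zero.

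Since $v$ is a priori only an $L^p$ distribution, neither Kato's inequality nor the forthcoming chain rule $v \mapsto v^{p-1}$ is immediately legitimate. The smooth monotonic approximation of Section \ref{sect:approx} produces a monotone sequence of smooth nonnegative subsolutions converging to $v$, while the a priori regularity theorem of Section \ref{sect:reg} upgrades $v$ itself to at least a locally Sobolev function. One can then test $\Delta v - v \leq 0$ against $\varphi = v^{p-1}\eta^2$ for any nonnegative Lipschitz cutoff $\eta$ (after a standard regularization $v \leadsto v+\e$ when $p<2$), and the manifold Brezis--Kato inequality of Section \ref{sec:Kato} delivers a Caccioppoli-type estimate of the form
\[
\int_M v^p\,\eta^2 + \frac{4(p-1)}{p^2}\int_M |\nabla v^{p/2}|^2\,\eta^2 \leq C_p \int_M v^p\,|\nabla \eta|^2.
\]
Completeness is then used exactly once, through the existence of smooth cutoffs $\eta_R$ with $\eta_R \equiv 1$ on $B_R(o)$, $\supp \eta_R \subset B_{2R}(o)$, and $|\nabla \eta_R| \leq 2/R$: these are obtained by regularizing the distance function, which is a globally defined $1$-Lipschitz exhaustion on $M$. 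Inserting $\eta_R$ and letting $R \to +\infty$ makes the right-hand side tend to zero since $v \in L^p(M)$, so $v \equiv 0$. Equivalently, this is the Liouville-type theorem of Section \ref{sect:reg} applied to $v$, and gives $u \geq 0$ a.e.

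\emph{Main obstacle.} The delicate part is not the final cutoff step---for which geodesic completeness alone suffices---but justifying the Kato inequality and the Brezis--Kato chain rule when $v$ is merely a nonnegative $L^p$ distributional subsolution, \emph{without} any curvature hypothesis on $M$. This is precisely the content of the approximation and regularity machinery developed in Sections \ref{sect:approx}--\ref{sect:reg}, and constitutes the heart of the argument.
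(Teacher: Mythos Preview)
Your proposal is correct and follows essentially the same route as the paper: apply the Brezis--Kato inequality (Proposition~\ref{prop-BK}) to obtain that $v=(-u)_{+}$ is a nonnegative subharmonic distribution, invoke the regularity and Caccioppoli estimate of Theorem~\ref{prop:reg positive subharm} (justified via the monotone approximation of Section~\ref{sect:approx}) to test against $v^{p-1}\eta^{2}$, and then exploit completeness through standard cutoffs with $|\nabla\eta_R|\leq C/R$ to force $v\equiv 0$. The only cosmetic differences are that the paper discards the term $v$ on the right of $\Delta v\geq v$ and works with the cleaner subharmonic Liouville property \eqref{Lp-Sub} (Lemma~\ref{lemma-Lp-Sub} and Corollary~\ref{prop:Lp-Sub}), and that your phrase ``test $\Delta v - v \leq 0$'' has a sign slip---it should read $\geq 0$---though your displayed Caccioppoli inequality is the correct one.
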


As a matter of fact, a by-product of our approach is that a complete manifold is $\C$-Positivity Preserving where
\[
\C = \{ u \in L^{1}_{loc}(M): \| u \|_{L^{p}(B_{2R}\setminus B_{R})} = o(R^{2/p}) \text{ as }R \to +\infty \},
\]
where $p\in (1,+\infty)$ (see Remark \ref{rem-subquadratic}).\smallskip

\begin{remark}
 We note explicitly that, in the statement of Theorem \ref{th:BMS}, the endpoint cases $p=1$ and $p=+\infty$ are excluded. This is because, in general, the corresponding property may fail. Indeed, it is well known that there are complete Riemannian manifolds which are not $L^{\infty}$-Positivity Preserving (since stochastically incomplete). On the other hand, \cite{BM} contains an example of complete manifold whose sectional curvature decays more than quadratically to $-\infty$ and such that the $L^{1}$-Positivity Preservation is not satisfied.
\end{remark}

\subsection{\texorpdfstring{$L^{p}$}{Lp}-Positivity Preservation, parabolicity, capacity and removable sets}
A natural problem is to understand to which extent geodesic completeness is a necessary condition for the \eqref{P} property to hold. We present two families of results in this direction: 
\begin{enumerate}
 \item Theorem \ref{th:caccio} states that a not necessarily complete Riemannian manifold $M$ still enjoys the \eqref{P} property if it has a finite number of ends, all of which are $q$-parabolic for possibly different values of $q>\frac{2p}{p-1}$.
 \item Corollary \ref{coro:LpPP cap}, Proposition \ref{prop_Mink} and Section \ref{sec_Kbig} deal with manifolds of the form $M\setminus K$, where $M$ is a complete Riemannian manifold and $K$ is a compact subset with either capacity or Minkowski-type upper bounds. 
\end{enumerate}
We refer the reader to \cite{mattilone, dl_book_tangent, grig} for a background on capacity and parabolicity. The relevant definitions will be also recalled in Section \ref{sec:parabolic}.

The main idea for these results is that sets that are sufficiently ``small'' in a suitable sense do not influence the behavior of solutions to $(-\Delta+1)u\geq 0$, and parabolicity of a manifold guarantees the same property for the boundary at infinity of $M$. 
\smallskip

To be more precise, we prove the following results:

\begin{proposition}\label{prop:LpPP cap}
Let $1<p<+\infty$ and let $M = N \setminus K$, where $(N,h)$ is an $n$-dimensional complete Riemannian manifold and $K\subset N$ is a compact set. Suppose that the Hausdorff dimension of $K$ satisfies
\begin{gather}
\dim_{H}(K)<n-\frac{2p}{p-1}\, .
\end{gather}
Then $M$ is $L^p$-Positivity Preserving.
\end{proposition}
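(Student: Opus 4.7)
The plan is to combine a removable-singularity argument with Theorem \ref{th:BMS} applied to the complete ambient manifold $N$. Since $\dim_{H}(K) < n$, the set $K$ has $n$-dimensional Hausdorff measure zero and in particular $\vol_{N}(K)=0$; consequently, any $u \in L^{p}(M)$ can be regarded as an element of $L^{p}(N)$ by extending it arbitrarily (say by zero) on $K$. Once we prove that this extended function still satisfies $(-\Delta+1)u \ge 0$ distributionally on all of $N$, Theorem \ref{th:BMS} applied on the complete manifold $N$ yields $u \ge 0$ a.e.\ on $N$, hence on $M$.

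To extend the distributional inequality from $M$ to $N$, the natural move is to test against products $\eta_{\varepsilon} \vp$, where $0\le \vp \in C^{\infty}_{c}(N)$ is arbitrary and $\{\eta_{\varepsilon}\}\subset C^{\infty}_{c}(M)$ is a family of cutoffs with $0\le \eta_{\varepsilon}\le 1$, $\eta_{\varepsilon}\to 1$ pointwise a.e.\ on $N$, and
\begin{equation*}
\norm{\nabla \eta_{\varepsilon}}_{L^{p'}(N)} + \norm{\Delta \eta_{\varepsilon}}_{L^{p'}(N)} \longrightarrow 0 \quad \text{as } \varepsilon\to 0^{+},
\end{equation*}
where $p'=p/(p-1)$. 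Since $\eta_{\varepsilon}\vp \in C^{\infty}_{c}(M)$ is an admissible test function and
\begin{equation*}
(-\Delta + 1)(\eta_{\varepsilon}\vp) = \eta_{\varepsilon}(-\Delta+1)\vp - 2 g(\nabla \eta_{\varepsilon},\nabla \vp) - \vp\,\Delta \eta_{\varepsilon},
\end{equation*}
H\"older's inequality bounds the last two contributions by a multiple of $\norm{u}_{L^{p}(N)}\norm{\vp}_{C^{1}}(\norm{\nabla \eta_{\varepsilon}}_{L^{p'}} + \norm{\Delta \eta_{\varepsilon}}_{L^{p'}})$, which vanishes in the limit, while dominated convergence handles the main term $\int u \eta_{\varepsilon}(-\Delta+1)\vp \to \int_{N} u(-\Delta+1)\vp$. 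This produces the desired distributional inequality on $N$.

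The main obstacle, and the place where the threshold $\dim_{H}(K) < n - 2p/(p-1) = n - 2p'$ appears sharply, is the construction of the cutoff family. The idea is to fix $s$ with $\dim_{H}(K) < s < n - 2p'$ and, at each scale $\varepsilon>0$, use a Frostman-type covering of $K$ by balls $\{B_{r_{i}}(x_{i})\}$ with $r_{i}\le \varepsilon$, controlled overlap, and $\sum_{i} r_{i}^{s}\le C$. Taking $\eta_{\varepsilon}$ to be a smooth function that vanishes on $\bigcup_{i} B_{r_{i}}(x_{i})$, equals $1$ outside a bounded dilate, and transitions on an annulus of comparable size around each ball, one gets pointwise bounds $|\nabla \eta_{\varepsilon}|\lesssim r_{i}^{-1}$ and $|\Delta \eta_{\varepsilon}|\lesssim r_{i}^{-2}$ on a set of volume $\lesssim r_{i}^{n}$. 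Summing yields
\begin{equation*}
\norm{\Delta \eta_{\varepsilon}}_{L^{p'}(N)}^{p'} \lesssim \sum_{i} r_{i}^{n - 2p'} \le \varepsilon^{n-s-2p'} \sum_{i} r_{i}^{s} \longrightarrow 0,
\end{equation*}
the exponent $n-s-2p'$ being strictly positive by the choice of $s$; the gradient estimate is controlled analogously with the less restrictive exponent $n-s-p'$. The hard part is to turn this scaling heuristic into a rigorous statement on a Riemannian manifold, ensuring a Vitali-type selection with uniformly finite overlap, compatibility between metric balls in $N$ and smooth bump functions built from regularized distance, and the assembly of the local pieces into a single smooth cutoff whose Laplacian still satisfies the required $L^{p'}$ bound. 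This is precisely the role of the Frostman-type lemma and the removable-singularity machinery announced in the abstract.
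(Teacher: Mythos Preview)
Your approach is sound and leads to a correct proof, but it differs from the paper's route. The paper does not extend the inequality back to $N$; instead it argues entirely on $M$. From $\dim_{H}(K)<n-\tfrac{2p}{p-1}$ one picks $q$ with $\dim_{H}(K)<n-q<n-\tfrac{2p}{p-1}$, so that $\mathcal H^{n-q}(K)=0$ and hence $K$ is $q$-polar (standard potential theory). Then $M=N\setminus K$ is viewed as a manifold with finitely many ends: the bounded end $\bar U\setminus K$ (for a smooth $U\Supset K$) is $q$-parabolic because removing a $q$-polar set from a compact manifold preserves $q$-parabolicity, while the unbounded ends of $N\setminus U$ are $\infty$-parabolic by completeness. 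The general Theorem \ref{th:caccio} (Liouville under $q_j$-parabolic ends, proved via the Caccioppoli estimate \eqref{eq:caccioppoli-vp}) then gives \eqref{Lp-Sub} on $M$, hence \eqref{P}. So the paper's argument is capacity/parabolicity $\Rightarrow$ Liouville on $M$, whereas yours is a removable-singularity transfer $\Rightarrow$ \eqref{P} on the complete $N$.

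Your strategy is closer in spirit to the paper's treatment of the \emph{Minkowski} threshold case (Lemma \ref{lem:laplacian extension} and Proposition \ref{prop_Mink}), where exactly this ``cutoff with small $W^{2,p'}$-norm'' extension is carried out; you are essentially proposing the Hausdorff analogue. The overlap issue you flag is real but resolvable by classical capacity theory: $\dim_H(K)<n-2p'$ implies that $K$ has zero Bessel $(2,p')$-capacity, which is precisely the existence of cutoffs $\phi_\varepsilon\to 0$ in $W^{2,p'}$ with $\phi_\varepsilon\equiv 1$ near $K$; this bypasses the ad hoc ball-by-ball assembly. Two minor corrections: the ``Frostman-type lemma'' in the abstract is used in the paper for the \emph{opposite} direction (building counterexamples when $K$ is large, Section \ref{sec_Kbig}), not for the positive result here; and the paper's proof buys more generality (it plugs into Theorem \ref{th:caccio}, covering arbitrary $q$-parabolic ends), while yours is more direct and self-contained for the specific $N\setminus K$ situation.
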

As a matter of fact the conclusion of  Proposition \ref{prop:LpPP cap} holds if $K$ is  $q$-polar for some $\frac{2p}{p-1}<q \leq +\infty$, a condition that is implied by the smallness of Hausdorff dimension.

With a stronger assumption on the size of $K$, it is possible to deal with the threshold dimension as well. In particular, we have the following result. Here, and in the subsequent parts of the paper, given a subset $E$ of a complete Riemannian manifold $(N,h)$, we denote by $B_{r}(E) = \cup_{x\in E}B_{r}(x)$ its open tubular neighborhood.
\begin{proposition}\label{prop:LpPPMink}
 Let $1<p<+\infty$ and let $M = N \setminus K$, where $(N,h)$ is an $n$-dimensional complete Riemannian manifold and $K\subset N$ is a bounded set. If the tubular neighborhoods of $K$ have uniform volume bounds of the form
 \begin{gather}
  \vol\ton{\B r K }\leq C r^{\frac{2p}{p-1}}
 \end{gather}
for some $C$ independent of $r$, then $M$ is $L^p$-Positivity Preserving.
\end{proposition}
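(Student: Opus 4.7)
The plan is to reduce the statement to Theorem~\ref{th:BMS} on the complete ambient manifold $N$ by showing that the Minkowski hypothesis makes $K$ a removable set for $L^p$ subharmonic distributions. Since $\vol(\B r K)\to 0$ as $r\to 0^+$, the compact set $K$ has measure zero, so extending any $u\in L^p(M)$ with $(-\Delta+1)u\geq 0$ distributionally on $M$ by $0$ across $K$ yields $\tilde u\in L^p(N)$ which agrees with $u$ a.e.\ on $M$. If $\tilde u$ inherits the distributional inequality $(-\Delta+1)\tilde u\geq 0$ on the whole of $N$, then Theorem~\ref{th:BMS} forces $\tilde u\geq 0$ a.e., proving the proposition.

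To verify the inequality on $N$, fix $0\leq \vp\in C_c^\infty(N)$ and, for each small $\varepsilon>0$, build a smooth cutoff $\chi_\varepsilon$ on $N$ with $\chi_\varepsilon\equiv 0$ on $\B\varepsilon K$, $\chi_\varepsilon\equiv 1$ outside $\B{2\varepsilon}K$, and two-sided bounds $|\nabla\chi_\varepsilon|\leq C/\varepsilon$, $|\Delta\chi_\varepsilon|\leq C/\varepsilon^2$. Then $\chi_\varepsilon\vp\in C_c^\infty(M)$ is admissible in the distributional hypothesis, and expanding $(-\Delta+1)(\chi_\varepsilon\vp)$ gives
\begin{equation*}
\int_M u\,\chi_\varepsilon\,(-\Delta+1)\vp\,\dv \;\geq\; \int_M u\,\bigl[2g(\nabla\chi_\varepsilon,\nabla\vp)+\vp\,\Delta\chi_\varepsilon\bigr]\dv.
\end{equation*}
The left-hand side converges, by dominated convergence and $\vol(K)=0$, to $\int_N \tilde u\,(-\Delta+1)\vp\,\dv$, so the task is to send the right-hand side to zero.

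Both error integrands are supported in the annular region $A_\varepsilon:=\B{2\varepsilon}K\setminus \B\varepsilon K$, and the Minkowski bound gives $\vol(A_\varepsilon)\leq \vol(\B{2\varepsilon}K)\leq C\varepsilon^{2p/(p-1)}$, hence $\vol(A_\varepsilon)^{(p-1)/p}\leq C\varepsilon^2$. Applying H\"older with conjugate exponent $p/(p-1)$ together with the cutoff bounds,
\begin{equation*}
\left|\int u\,\vp\,\Delta\chi_\varepsilon\,\dv\right|\leq \frac{C\|\vp\|_\infty}{\varepsilon^2}\,\|u\|_{L^p(A_\varepsilon)}\,\vol(A_\varepsilon)^{(p-1)/p}\leq C\,\|u\|_{L^p(A_\varepsilon)},
\end{equation*}
and similarly $\bigl|\int u\,g(\nabla\chi_\varepsilon,\nabla\vp)\,\dv\bigr|\leq C\varepsilon\,\|u\|_{L^p(A_\varepsilon)}$. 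Since $\vol(A_\varepsilon)\to 0$ and $u\in L^p(M)$, absolute continuity of the Lebesgue integral sends both error terms to $0$, completing the removability argument. Note that the exponent $2p/(p-1)$ is precisely what makes the $\varepsilon^{-2}$ blow-up of $\Delta\chi_\varepsilon$ cancel; this is what pins Proposition~\ref{prop:LpPPMink} to the threshold Hausdorff codimension of Proposition~\ref{prop:LpPP cap}.

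The main technical obstacle is the construction of $\chi_\varepsilon$ with the Laplacian bound $|\Delta\chi_\varepsilon|\leq C/\varepsilon^2$, since $\dist(\cdot,K)$ is only Lipschitz. This will be handled by smoothing the Lipschitz cutoff $\max\{0,\min\{1,(\dist(\cdot,K)-\varepsilon)/\varepsilon\}\}$ via a partition-of-unity-plus-mollification procedure on a finite atlas covering the precompact tubular neighborhood $\overline{\B{3\varepsilon}K}$, which exists because $N$ is complete and $K$ is compact; this reduces the Laplacian estimate to the standard Euclidean mollification bound.
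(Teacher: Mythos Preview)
Your proposal is correct and follows essentially the same approach as the paper: both arguments build cutoffs $\chi_\varepsilon$ (the paper's $1-\psi_k$) with $|\nabla\chi_\varepsilon|\lesssim\varepsilon^{-1}$, $|\Delta\chi_\varepsilon|\lesssim\varepsilon^{-2}$ obtained by smoothing the distance to $K$, and then use H\"older against the Minkowski bound $\vol(A_\varepsilon)^{(p-1)/p}\lesssim\varepsilon^{2}$ to kill the error terms and transfer the distributional inequality to the complete manifold $N$. The only organizational difference is that the paper isolates this as a removability lemma (Lemma~\ref{lem:laplacian extension}) for the bare inequality $\Delta u\geq 0$, deduces $(L\text{-}Sub_p)$ on $N\setminus K$, and then invokes Lemma~\ref{lemma-Lp-Sub}, whereas you run the cutoff argument directly on $(-\Delta+1)u\geq 0$ and appeal to Theorem~\ref{th:BMS}; your route is marginally more direct for this particular statement, while the paper's version yields the slightly stronger conclusion that $(L\text{-}Sub_p)$ itself holds on $M$.
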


On the other hand, it is possible to build explicit examples of complete manifolds $(M,h)$ that lose the \eqref{P} property when a set $K$ of ``big size'' is removed from them. In particular, Proposition \ref{prop_no_good} shows that if the set $K$ has Hausdorff dimension 
\begin{gather}
\dim_{H}(K)>n-\frac{2p}{p-1}\, ,
\end{gather}
then the \eqref{P} property does not hold on $M\setminus K$, so that the dimensional threshold $n-\frac{2p}{p-1}$ is essentially sharp. 

 As alluded to above, the \eqref{P} property has strong consequences on the spectral theory of Schr\"odinger operators of the form $\Delta-V$ with singular potential $V\in L^p_{loc}$.
In the final Section~\ref{subsec:core} we will apply our main results to exhibit assumptions ensuring that the Schr\"odinger operator is essential self-adjoint (when $p=2$) or that $C^\infty_c(M)$ is an $L^p$-core for $\Delta-V$.

\section{Smooth approximation of distributional subsolutions}\label{sect:approx}

In the classical potential theory for the Euclidean Laplacian in $\rr^{n}$ (and therefore on any $2$-dimensional manifold in isothermal local coordinates) it is known that subharmonic distributions are the monotone limit of smooth subharmonic functions.
In particular, given a subharmonic function $u:\rr^n\to \rr$, $u\in L^1_{\text{loc}}\ton{\R^n}$, we can consider a $C^\infty_c(\R^n)$ mollifier $\phi_r(x)=r^{-n}\phi(x/r)$ and the one-parameter family of functions $u(x,r)=\phi_r \ast u$. Using standard estimates, one can show that for all $r>0$, $u(x,r)$ are smooth subharmonic functions that, as $r\to 0$, converge monotonically from above to $u(x)$ in the $L^1_{\text{loc}}$ sense.

We need to extend this property to the locally uniformly elliptic operator
 \[
 \CL = \Delta - \l(x)
 \]
on a Riemannian manifold, $\l(x)$ being a smooth function.

\begin{theorem}\label{th:smoothing}
	Let $(M,g)$ be an open manifold. For any $0\le \lambda \in C^\infty (M)$, the operator $\CL=\Delta-\lambda(x)$ has the  property of  local smooth monotonic approximation of $L^{1}_{loc}$-subsolutions, i.e the following holds.\\
For every $\Omega\Subset M$ relatively compact open domain with smooth boundary, there exists $\Omega'\Subset \Omega \Subset M$ such that ,  if $u \in L^{1}(\Omega)$ solves $\CL u \geq 0$ in $\Omega$, then there exists  a sequence $\{ u_{k} \} \subseteq C^{\infty}(\overline \Omega')$ satisfying the following properties:
\begin{enumerate}
 \item [a)] $u \leq u_{k+1}  \leq u_{k}$ for all $k \in \nn$;
 \item [b)] $u_{k}(x) \to u(x)$ as $k \to +\infty$ for a.e. $x \in \Omega'$;
 \item [c)] $\CL u_{k} \geq 0$ in $\Omega'$ for all $k \in \nn$;
 \item [d)] $\| u - u_{k} \|_{L^{1}(\Omega')} \to 0$ as $k\to +\infty$.
\end{enumerate}
\end{theorem}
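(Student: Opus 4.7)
The plan is to use the positive Dirichlet Green's function of $-\CL$ on a slightly smaller smooth domain as an intrinsic mollifier, building the monotone smooth approximation through the Riesz decomposition of $u$ into a regular and a singular part. Fix smooth intermediate domains $\Omega' \Subset \Omega'' \Subset \Omega$. Since $\lambda \geq 0$, the operator $-\CL = -\Delta + \lambda$ is a non-negative self-adjoint elliptic operator on $\Omega''$ under Dirichlet boundary conditions and admits a positive symmetric Green's function $G(x,y)$. The distributional inequality $\CL u \geq 0$, together with the Schwartz characterization of non-negative distributions, implies that $\mu := \CL u$ is a non-negative Radon measure on $\Omega''$, and standard elliptic theory yields the representation
\[
u(x) \;=\; h(x) - \int_{\Omega''} G(x,y)\, d\mu(y), \qquad x \in \Omega'',
\]
where $h$ is smooth and $\CL$-harmonic on $\Omega''$ (solving the Dirichlet problem for the trace of $u$).

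Next I would regularize the diagonal singularity of $G$. Choose $\varphi^{y}_k \geq 0$, jointly smooth in $(x,y)$, approximating $\delta_y$ and supported in shrinking neighborhoods of $y$, and let $G_k(\cdot,y)$ be the Dirichlet solution on $\Omega''$ of $-\CL_x G_k(\cdot,y) = \varphi^{y}_k$. By elliptic regularity $G_k$ is smooth in $x$; by the maximum principle $0 \leq G_k \leq G$; and a suitable monotone choice of $\varphi^{y}_k$ produces $G_k \nearrow G$ pointwise as $k \to \infty$. Set
\[
u_k(x) \;:=\; h(x) - \int_{\Omega''} G_k(x,y)\, d\mu(y), \qquad x \in \Omega'.
\]
Then $u_k \in C^\infty(\overline{\Omega'})$ by differentiation under the integral, the ordering $u \leq u_{k+1} \leq u_k$ follows from $G_k \leq G_{k+1} \leq G$ together with $\mu \geq 0$, and pointwise and $L^1$ convergence $u_k \to u$ follow from monotone/dominated convergence (with dominating function $u_1 - u \in L^1(\Omega')$, since $u_1$ is smooth and $u\in L^1$). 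Finally,
\[
\CL u_k(x) \;=\; -\int_{\Omega''} \CL_x G_k(x,y)\, d\mu(y) \;=\; \int_{\Omega''} \varphi^{y}_k(x)\, d\mu(y) \;\geq\; 0,
\]
so each $u_k$ is itself a smooth subsolution, which verifies (a)--(d).

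The main obstacle is the construction of the family $\{G_k\}$ simultaneously satisfying smoothness in $x$, the sign condition $\CL_x G_k \leq 0$, and monotone convergence $G_k \nearrow G$. These requirements are in tension because the driving data $\varphi^{y}_k$ must approximate $\delta_y$ (forcing it to concentrate) while yielding Green solutions that are monotone in $k$. A natural way to secure the monotonicity is to build the $\varphi^{y}_k$ from the Dirichlet heat semigroup of $-\CL$, so that the ordering of $G_k$ becomes a direct consequence of the semigroup property on positive inputs; alternatively, one can drop monotonicity at the level of $G_k$ and restore it at the level of $u_k$ by passing to pointwise envelopes, or employ a Perron-type regularization directly on the distributional subsolution. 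One must also justify finiteness of the Green potential $\int G\, d\mu$ and the commutation of $\CL_x$ with the integral defining $u_k$: both follow from the $L^1$ hypothesis on $u$ (which forces $Gu\in L^1$) and from the elliptic regularity of the mollified kernels.
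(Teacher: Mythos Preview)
Your approach is essentially correct and is a genuinely different route from the paper's.

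The paper first reduces $\CL = \Delta - \lambda$ to the pure Laplacian via the Protter--Weinberger trick: solving $\CL\alpha = 0$ with $\alpha>0$ on $\Omega$ and observing that $\CL w \ge 0$ is equivalent to $\Delta_\alpha(w/\alpha)\ge 0$, where $\Delta_\alpha$ is a weighted Laplacian that, in dimension $n\ge 3$, is the Laplace--Beltrami operator of a conformal metric. Having reduced to $\Delta$, the paper then constructs an \emph{explicit} mollifier from the Dirichlet Green function $G$: for a fixed cutoff $\psi$, it sets
\[
v(x,r)=\int_M \dot\psi\bigl(G(x,y)-r\bigr)\,u(y)\,|\nabla_y G(x,y)|^2\,dv(y)
\]
and verifies monotonicity, subharmonicity, and $L^1$-convergence by direct coarea and integration-by-parts computations. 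No Riesz decomposition is invoked.

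Your route instead works directly with $-\CL$ (no conformal reduction), uses the Riesz decomposition $u = h - G\mu$ with $\mu = \CL u \ge 0$, and regularizes the Green kernel itself. This is closer in spirit to the abstract potential-theoretic argument the paper explicitly mentions (citing \cite{Sj,He,BL}) but chooses not to pursue in favor of a self-contained construction. What your approach buys is conceptual transparency and no need for the $\alpha$-trick or the somewhat involved coarea manipulations; what the paper's approach buys is a concrete convolution-type formula independent of heat-kernel machinery.

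Two points to tighten. First, your description of $h$ as ``solving the Dirichlet problem for the trace of $u$'' is not quite right, since $u\in L^1$ has no boundary trace; the correct statement is simply that $h:=u+G\mu$ is a distributional $\CL$-harmonic function, hence smooth by Weyl's lemma. Second, the ``main obstacle'' you flag is cleanly resolved by your own heat-semigroup suggestion: taking $G_k(x,y)=\int_{1/k}^\infty H_t(x,y)\,dt$ with $H_t$ the Dirichlet heat kernel of $-\CL$ on $\Omega''$ gives $G_k$ jointly smooth, $G_k\nearrow G$, and $-\CL_x G_k = H_{1/k}(x,\cdot)\ge 0$, so you should commit to this rather than leaving it as one option among several. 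Finally, ``$Gu\in L^1$'' in your last line should read $G\mu\in L^1(\Omega'')$, and this follows from $\mu(\overline{\Omega''})<\infty$ together with the uniform bound $\sup_y\int_{\Omega''}G(x,y)\,dx<\infty$, not directly from $u\in L^1$.
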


We preliminary observe that, in order to prove the monotonic approximation, we can replace the Schr\"odinger operator $\CL$ by an operator of the form $\Delta_{\alpha} = \a^{-2}\div(\a^{2}\nabla \cdot)$ for a suitable smooth function $\a >0$. Indeed,  given a smooth neighborhood $\Omega \Subset M$ of $x_0$, let $\alpha\in C^\infty(M)$ be  a solution of the problem
\[
\begin{cases}
	\CL \alpha = 0,\\
    \alpha >0,
\end{cases}\text{on } \Omega.
\]	
Thanks to the maximum principle, such an $\alpha$ can be obtained for instance as a solution of the Dirichlet problem $\CL \alpha=0$ with positive constant boundary data on $\partial \Omega$. We use the following trick introduced by M.H. Protter and H.F. Weinberger in \cite{PW}.

\begin{remark}
The assumption $\lambda \ge 0$ in Theorem \ref{th:smoothing} can be avoided up to taking a small enough neighborhood of $x_0$. Indeed, suppose that $\lambda(x_0)<0$. Since the $M$ is asymptotically Euclidean in $x_0$, the constant in the Poincaré inequality can be made arbitrarily small on small domain. Namely, there exists a small enough neighborhood $U$ of $x_0$ so that 
	\[
 \int_{U} -\lambda \vp^2 \dv \le	-\frac{\lambda(x_0)}{2}\int_{U} \vp^2 \dv\le  \int_U |\nabla \vp|^2 \dv ,\quad \forall \vp\in C^\infty_c(U),\] i.e., the bottom of the spectrum $\lambda^U_1(-\CL)$ of $-\CL=-\Delta+\lambda$ on $U$ is non-negative. By the monotonicity of $\lambda^U_1(-\CL)$ with respect to the domain, we thus obtain that $\lambda^\Omega_1(-\CL)>0$ on some smaller domain $\Omega\Subset U$. Accordingly, there exists a strictly positive solution of $\CL \alpha =0$ on $\Omega$.
\end{remark}

\begin{lemma}\label{lemma_alpha} The function $w\in L^1_{loc}(M)$ is a distributional solution of $\CL w\ge 0$ on $\Omega$ if and only if $w/\alpha$ is a distributional solution of $\Delta_\alpha (w/\alpha):=\alpha^{-2}\dive(\alpha^2 \nabla(w/\alpha)) \ge 0$ on $\Omega$.
\end{lemma}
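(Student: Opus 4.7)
The plan is to establish the equivalence via a direct pointwise identity relating $\CL$ and $\Delta_\alpha$ which, once translated into the distributional pairing, is inverted by a bijection on the cone of non-negative test functions. The key observation is the Protter--Weinberger-type conjugation: for any smooth $\varphi$,
\[
\CL(\alpha\varphi) = \alpha\Delta\varphi + 2\ps{\nabla\alpha}{\nabla\varphi} + \varphi(\Delta\alpha - \lambda\alpha)
= \alpha\Delta\varphi + 2\ps{\nabla\alpha}{\nabla\varphi} = \alpha^{-1}\dive(\alpha^2\nabla\varphi),
\]
where the vanishing of the last term on the first line uses $\CL\alpha=0$, and the final equality is a direct expansion. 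In particular, $\CL(\alpha\varphi) = \alpha\,\Delta_\alpha\varphi$.

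Next, I would set up the correspondence of test functions. Since $\alpha\in C^\infty(\Omega)$ with $\alpha>0$, the map $\varphi\mapsto \alpha\varphi$ is a bijection of $C^\infty_c(\Omega)$ that preserves non-negativity (with inverse $\eta\mapsto \eta/\alpha$, which is smooth and compactly supported because $\alpha$ is smooth and bounded away from zero on any compact subset of $\Omega$ containing the support). In particular, $w/\alpha \in L^1_{loc}(\Omega)$ whenever $w\in L^1_{loc}(\Omega)$, so the distributional interpretation of $\Delta_\alpha(w/\alpha)\ge 0$ — namely that $\int_\Omega (w/\alpha)\,\dive(\alpha^2\nabla\varphi)\dv\ge 0$ for every $0\le \varphi\in C^\infty_c(\Omega)$ — makes sense.

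With these pieces, the proof reduces to a one-line identity: for $w\in L^1_{loc}(\Omega)$ and any $0\le \varphi\in C^\infty_c(\Omega)$,
\[
\int_\Omega w\, \CL(\alpha\varphi)\dv \;=\; \int_\Omega w\, \alpha^{-1}\dive(\alpha^2\nabla\varphi)\dv \;=\; \int_\Omega (w/\alpha)\, \dive(\alpha^2\nabla\varphi)\dv.
\]
If $\CL w\ge 0$ distributionally, testing against $\eta=\alpha\varphi\ge 0$ gives non-negativity of the right-hand side, i.e.\ $\Delta_\alpha(w/\alpha)\ge 0$ distributionally. Conversely, any non-negative test function $\eta$ for $\CL$ can be written as $\eta = \alpha\varphi$ with $\varphi=\eta/\alpha\in C^\infty_c(\Omega)$ and $\varphi\ge 0$, and the same identity run in the opposite direction yields $\CL w\ge 0$.

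There is essentially no hard step here; the only care needed is the verification that $\eta/\alpha \in C^\infty_c(\Omega)$ (immediate from $\alpha \in C^\infty(\Omega)$ and $\alpha>0$) and the pointwise computation of $\CL(\alpha\varphi)$, both of which are routine. The argument does not require additional regularity on $w$ because all derivatives in the identity fall on $\varphi$.
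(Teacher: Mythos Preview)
Your proof is correct and follows essentially the same route as the paper: both arguments rest on the Protter--Weinberger conjugation identity relating $\CL$ and $\Delta_\alpha$, and on the fact that multiplication by $\alpha$ is a bijection of the cone of non-negative test functions. The only cosmetic difference is the direction of the substitution---the paper computes $\alpha\Delta_\alpha(\vp/\alpha)=\CL\vp$ and tests $w/\alpha$ against $\vp/\alpha$, while you compute $\CL(\alpha\vp)=\alpha\Delta_\alpha\vp$ and test $w$ against $\alpha\vp$---but these are the same identity up to relabeling.
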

\begin{proof}
	Let $0\leq \vp \in C^\infty_c(\Omega)$. Since $\Delta\alpha = \lambda \alpha$, we have
	\begin{align*}
	\alpha \Delta_\alpha(\vp/\alpha) &= \alpha^{-1}\dive (\alpha^2 \nabla(\vp/\alpha))\\
	&= \alpha^{-1} \dive(\alpha\nabla \vp - \vp\nabla \alpha)\\
	&= \alpha^{-1} (\alpha\Delta \vp - \vp\Delta \alpha)\\
    &= \CL \vp.
	\end{align*}	
	Noticing that the operator
$\Delta_{\alpha}$ is symmetric with respect to the smooth weighted measure $\alpha^2\dv$, we get	
	\begin{align*}
		(\Delta_\alpha \frac{w}{\alpha}, \alpha\vp) = \int_{M} \frac{w}{\alpha}\Delta_{\alpha}\frac{\vp}{\alpha} \alpha^2\dv =
		\int_{M} \frac{w}{\alpha}\alpha^2\frac{\CL\vp}{\alpha} \dv =  
		\int_{M} w \CL\vp \dv = (\CL w,\vp). 
	\end{align*}
Since $0\le \alpha\vp \in C^\infty_c(\Omega)$, this concludes the proof of the lemma.
\end{proof}
	
According to this lemma, setting $v=\alpha^{-1}u$, we can infer the conclusion of the theorem from the equivalent statement for the operator $\Delta_\alpha$. Indeed, since $v\in L^1_{loc}(\Omega)$ solves $\Delta_\alpha v \geq 0$ distributionally in $\Omega$, if we prove that then there exists $\Omega'\Subset \Omega$ and a sequence $\{ v_{k} \} \subseteq C^{\infty}(\overline \Omega')$ satisfying the following properties:
\begin{enumerate}
	\item [a')] $v \leq v_{k+1}  \leq v_{k}$ for all $k \in \nn$;
	\item [b')] $v_{k}(x) \to v(x)$ as $k \to +\infty$ for a.e. $x \in \Omega'$;
	\item [c')] $\Delta_\alpha v_{k} \geq 0$ in $\Omega'$ for all $k \in \nn$.
    \item [d')] $\| v - v_{k} \|_{L^{1}(\Omega')} \to 0$ as $k\to +\infty$.
\end{enumerate}
then the sequence $u_{k} = \a v_{k}$ satisfies a), b), c) and d) as desired.\\

With this preparation,  Theorem \ref{th:smoothing} could be proved by exploiting the powerful machinery developed in the axiomatic potential theory. Indeed, 
according to \cite[Theorem 1]{Sj}, $v$ is a $\Delta_\alpha$-subharmonic function in the sense of Herv\'e,  \cite{He}. Hence, to conclude, we can apply a slightly modified version of \cite[Theorem 7.1]{BL}. Namely, one can verify that Theorem 7.1 in \cite{BL} works without any change if one uses in its proof the Green function of $\Omega$ with null boundary conditions on $\partial \Omega$. The existence of this Green function, in turn, can be deduced using different methods. For instance, from the PDE viewpoint,  we can appeal to the fact that the Dirichlet problem on smooth relatively compact domains is uniquely solvable; see the classical \cite{LSW}. A similar approach is used to obtain the monotonic approximation in \cite{BM}.

It is apparent that this kind of argument is quite involved. Therefore for the reader's convenience, we present a self-contained proof of Theorem \ref{th:smoothing} that does not rely on any abstract potential theory. Instead, it involves a rather different family of approximating functions with a convolution like flavor that, we feel, could be of independent interest.

\begin{proof}[Proof of Theorem \ref{th:smoothing}] Given the existence of local isothermal coordinates, and given that the Laplacian is conformally invariant in dimension $2$, the theorem follows easily in this case from the Euclidean case.

\vspace{5mm}

If $(M,g)$ has dimension $\geq 3$, by Lemma \ref{lemma_alpha} we can focus on the approximation problem for $\Delta_\alpha$, which is proportional to the Laplace-Beltrami operator relative to the metric $\tilde g_{ij} = \alpha ^{\frac 4 {n-2}} g_{ij}$. Thus, up to changing metric, we can simply study the approximation for the standard Laplace-Beltrami operator on the manifold $M$.

Let $\Omega$ be any relatively compact neighborhood of $x_0$, and let $\Omega'\Subset \Omega'' \Subset \Omega$, with $d(\Omega',\Omega''^C)\geq \epsilon>0$. Assume for simplicity that $\Omega$ has smooth boundary. Let $G_\Omega(x,y)=G(x,y)$ be the Green's function of the operator $\Delta$ on $\Omega$ with Dirichlet boundary conditions, where the signs are chosen so that $\Delta G = -\delta$. We recall some standard facts about the Green's function (see for example \cite[theorem 4.13]{AUGREEN}):
\begin{align}
 &G(x,y) \ \text{is smooth on } \ \Omega\times \Omega \setminus D\, , \ \text{where} \ D=\cur{(x,x)\, \ \ x\in \Omega} \ \text{is the diagonal}\\
 &G(x,y)>0\\
 &\label{G_est} G(x,y)\leq C d(x,y)^{2-n}\\
 &\label{dG_est} \abs{\nabla G(x,y)} \leq C d(x,y)^{1-n}\\
 &G(x,y)=G(y,x)\\
 &\label{G_Delta}\Delta_x G(x,y)= \Delta_y G(x,y)=-\delta_{x=y}
\end{align}

In the following, we will denote $G_x(y)=G(x,y)$ when $x$ is fixed. Thus we can also define the level sets $G_x^{-1}(t)= \cur{y \in M \ \ s.t. \ \ G_x(y)=t}\subset M$. Moreover, $\nabla_y G_x(y)=\nabla_y G(x,y)$ denotes the gradient of $G$ w.r.t. $y$. We will drop the subscript when there is no risk of confusion \footnote{ it is worth mentioning that while $G(x,y)$ is symmetric in $x$ and $y$, this does not mean that $\nabla_x G(x,y)=\pm \nabla_y G(x,y)$. Consider for example the symmetric function $f:\R^2\to \R$ given by $f(x,y)=xy$. However, since the usual Green's function in $\R^n$ is translation invariant, $G(x,y)=\abs{x-y}^{2-n}$, then in this case $\nabla_x G(x,y)=-\nabla_y G(x,y)$. This is not the case on a Riemannian manifold. }. 
It can be convenient sometimes to extend the definition of $G$ to $M\times M$ by setting it to be $0$ outside $\Omega\times \Omega$. This however turns $G$ into a non $C^1$ function over $\partial \Omega$, and \eqref{dG_est}, \eqref{G_Delta} hold only on $\Omega \times \Omega$.\\

Following a similar approach to the one in \cite{BL}, see also \cite{Ni} for a relevant representation formula for smooth functions, we can use the Green's function to construct explicitly the approximating sequence $v_k$. Let $\psi:\mathbb R \to [0,1]$ be a smooth function such that
\begin{gather}
\dot \psi \geq 0 \ \, \qquad \qquad \supp\ton{\dot \psi}\subseteq [-1;1]\ \, , \qquad \qquad \int_{\mathbb R} \dot \psi=1\, .
\end{gather}

Define the one parameter family of functions 
\begin{gather}
 v(x,r)= \int_M \dot \psi\ton{G(x,y)-r} u(y)\abs{\nabla_y G(x,y)}^2\ \dv(y)\, .
\end{gather}
For convenience, we will drop the integration symbol $\dv(y)$ when there is no risk of confusion.

Since $\supp\ton{\dot \psi}\subseteq[-1;1]$, and since $G(x,y)$ is smooth as long as $x\neq y \ \ \Longleftrightarrow G(x,y)\neq \infty$, the function $v(x,r)$ is smooth in $x$ for all $r<\infty$ fixed. Using the coarea formula, we can re-write $v(x,r)$ as 
\begin{gather}
 v(x,r)= \int_M \ \dot \psi\ton{G(x,y)-r} u(y)\abs{\nabla G(x,y)}^2=\\ \nonumber= \int_{-\infty}^\infty ds \ \dot \psi(s-r)\int_{G_x^{-1}(s)} u(y)\abs{\nabla G(x,y)}\, .
\end{gather}
Philosophically, $v(x,r)$ plays the role of a mollified $u(x)$ on the ball $\B {r^{2-n}}{x}$ as $r\to \infty$. 
 \vspace{5mm}
 
\textbf{Staying away from the boundary of $\Omega$}. A technical point needed for the proof of all the properties that we want to show is that we need to ``stay away from the boundary of $\Omega$''. What this means will become clear in the computations below, but for the moment let us mention that we can fix $r_0 \gg 0$ in such a way that for all $r\geq r_0$:
 \begin{gather}\label{eq_r_0}
  \forall x\in \Omega' \, ,  \ \forall y \in \Omega''^C \qquad \qquad \psi( G(x,y)-r)=0\, . 
 \end{gather}
This is possible because of the definition of $\psi$ and the upper bounds in \eqref{dG_est}.

Our final approximating sequence will be defined by
\begin{gather}
 v_k(x)= v(x,r_0+k)\, .
\end{gather}
 \vspace{5mm}
 
\textbf{Proof of monotonicity}.
Assuming that $u$ is smooth, and denoting $n$ the outward normal to the level sets $G_x^{-1}(t)$, we compute the derivative of $v(x,r)$ wrt the parameter $r$ by:
 \begin{gather}
  \frac \partial{\partial r} v(x,r)= \int_\R ds -\ddot \psi(s-r) \int_{G_x^{-1}(s)}\   u(y) \abs{\nabla G}= \\=\notag
  \int_\R ds -\ddot \psi(s-r) \int_{G_x=s}\   u(y) \ps{\nabla G}{-n}= \\ \notag
  = \int ds \ \ddot \psi(s-r) \int_{G_x\geq s} \ps{\nabla  u}{\nabla G} + \underbrace{\int ds \ \ddot \psi(s-r)}_{=0}  \underbrace{\int_{G_x\geq s} u(y)\Delta_y G_x}_{=u(x) \ \ \text{indep. of } s}  =\\[10pt]=\notag
  \int ds \ \ddot \psi(s-r) \int_s^\infty dt \int_{G_x=t} \ps{\nabla  u}{\frac{\nabla G}{\abs{\nabla G}}}=
 \end{gather}
 integrating by parts, since $\dot \psi$ has compact support the boundary term vanishes and we're left with:
 \begin{gather}
  \notag= +\int ds \ \dot \psi(s-r) \int_{G_x=s} \ps{\nabla u}{\frac{\nabla G}{\abs{\nabla G}}}= -\int ds \ \dot \psi(s-r) \int_{G_x=s} \ps{\nabla u}{n}=\\
  =\notag -\int ds \ \dot \psi(s-r) \int_{G_x\geq s} \Delta u=-\int_M \Delta u \int ds \ \dot \psi(s-r) \chi_{G_x\geq s}=\\
  \notag =\int_M \Delta u \int_{-\infty}^{G_x(y)} ds \ \dot \psi(s-r) =-\int_{M}\Delta u \cdot \psi(G_x(y)-r)\, .
 \end{gather}
 
We can conclude that
  \begin{gather}
  \frac{\partial}{\partial r} v(x,r)=-\int_{M}\Delta u \cdot \psi(G_x(y)-r)\, .
 \end{gather}
Now by the definition of $r_0$ in \eqref{eq_r_0}, if $r\geq r_0$ then for all $x\in \Omega'$ the support of $\psi(G_x(y)-r)$ is contained in $\Omega''$, and so $\psi(G_x(y)-r)$ is a non-negative smooth function of $y$ \footnote{the problem is that $G(x,y)$ is the Green's function of $\Omega$, so it is $C^0$ but not $C^1$ over $\partial \Omega$}. Since $\psi(s)=\int_{-\infty}^s \dot \psi (s)\geq 0$, we can use the distributional subharmonicity of $u$ to conclude that
  \begin{gather}
  \frac{\partial}{\partial r} v(x,r)=-\int_{M}\Delta u \cdot \psi(G_x(y)-r)=-\int_{M}u \cdot \Delta\qua{\psi(G_x(y)-r)}\leq 0
 \end{gather}
Since both the first and the third term in the last chain of equality pass to the limit as $u_j\to u$ in $L^1_{loc}$, this last inequality holds for all $u\in L^1$ that are distributionally subharmonic, smoothness of $u$ is not necessary here.
 \vspace{5mm}

\textbf{Proof of subharmonicity}. We proceed in a similar way for the proof of subharmonicity of $v(x,r)$ (here $r$ is fixed, and we study the subharmonicity in $x\in M$). Assuming as above that $u$ is smooth, we can compute
 \begin{gather}
  v(x,r)= \int_\R ds \ \dot \psi(s-r)  \int_{G_x=s}u\abs{\nabla G}= -\int_\R ds \ \dot \psi(s-r)  \int_{G_x=s}u\ \ps{\nabla G}{-n}=\\
  \notag =u(x)-\int_\R ds \ \dot \psi (s-r) \int_{G\geq s } \ps{\nabla u}{\nabla G}\, .
 \end{gather}
 Thus, rearranging the terms, we get
 \begin{gather} 
  v(x,r)-u(x)= -\int_\R ds \ \dot \psi (s-r) \int_{G\geq s } \ps{\nabla u}{\nabla G}=\\
  = \notag - \int_\R ds \ \dot \psi (s-r) \qua{\int_{G=s } G\ \ps{\nabla u}{n} - \int_{G_x\geq s} G \Delta u}=\\
  = \notag - \int_\R ds \ \dot \psi (s-r) \int_{G_x\geq s} (s-G) \Delta u=\\
  = \notag - \int_\Omega \Delta u  \int ds \ (s-G(x,y))\dot \psi (s-r) \chi_{G_x\geq s}
 \end{gather}
Let $\hat \psi(t)=\int_{-\infty}^t ds \psi(s)$ be the primitive of $\psi$. Notice that
\begin{gather}
 \int ds \ (s-G(x,y))\dot \psi (s-r) \chi_{G_x\geq s}=\\
 \notag =\int_{-\infty}^{G(x,y)} ds \  s\dot \psi (s-r) -G(x,y)\int_{-\infty}^{G(x,y)} ds \ \dot \psi (s-r) =\\
 \notag =G\psi(G-r)-\hat\psi(G-r)-G\psi(G-r)=-\hat \psi(G(x,y)-r) \, .
\end{gather}

Summing up, we have that, if $u$ is smooth,
\begin{gather}\label{eq_fr-u}
 v(x,r)-u(x)=\int_\Omega \hat \psi(G(x,y)-r) \Delta u(y)\, .
\end{gather}
We can compute the Laplacian of $v(x,r)$ by
\begin{gather}
  \Delta_x v(x,r)- \Delta_x u(x) = \int_M \ \Delta_x(\hat \psi(G(x,y)-r))\Delta_y u(y)\ \dv(y)\, .
\end{gather}
By direct computation:
\begin{gather}
 \Delta \qua{\hat \psi(G(x,y)-r)}= \nabla_i \qua{\psi (G(x,y)-r)\nabla_i G(x,y) } = \\
 \notag=\dot \psi(G(x,y)-r) \abs{\nabla G(x,y)}^2 +\psi(G(x,y)-r)\Delta G(x,y)=\\
 \notag=\dot \psi(G(x,y)-r) \abs{\nabla G(x,y)}^2 -\psi(G(x,y)-r)\delta_{x=y}=\dot \psi(G(x,y)-r) \abs{\nabla G(x,y)}^2 -\delta_{x=y}\, .
\end{gather}
Thus we obtain that 
\begin{gather}
 \Delta_x v(x,r)- \Delta_x u(x) = -\Delta_x u(x)+\int_\Omega \dot \psi(G(x,y)-r) \abs{\nabla_x G}^2 \Delta u(y)\dv(y)\, ,\\
 \notag \Delta_x v(x,r)=\int_\Omega \dot \psi(G(x,y)-r) \abs{\nabla_x G}^2 \Delta u\dv(y) =\int_\Omega u \ \Delta_y \qua{\dot \psi(G(x,y)-r) \abs{\nabla_x G}^2 }\dv(y)\, .
\end{gather}
The last equality makes sense also if $u$ is simply in $L^1_{\text{loc}}$ and not smooth. 

Arguing as in the proof of monotonicity, if $r\geq r_0$ and $x\in \Omega'$, $G(x,y)$ is smooth in this integral, and the distributional subharmonicity of $u$ allows us to conclude that
\begin{gather}
 \Delta_x v(x,r)\geq 0\, ,
\end{gather}
as desired.
 
 \vspace{5mm}
 
 \textbf{Proof of $L^1$ convergence}. In order to show that $v(x,r)\to v(x)$ in the $L^1(\Omega')$ sense as $r\to \infty$, we will use the equality \eqref{eq_fr-u} and the fact that, in the distributional sense, $\Delta u$ is a finite (non-negative) measure on the set $\Omega''$.

\medskip

Observe that if $u$ is a smooth subharmonic function, then by \eqref{eq_fr-u}:
 \begin{gather}
  v(x,r)-u(x)=\int_\Omega \hat \psi(G-r) \Delta u\, .
 \end{gather}
Thus the $L^1(\Omega')$ norm of $v(x,r)-u(x)$ is
\begin{gather}
 \int_{\Omega'}  \abs{v(x,r)-u(x)} \dv(x)=\int_{\Omega'}  v(x,r)-u(x)\ \dv(x)=\\
 \notag =\iint_{{\Omega'}\times {\Omega}} \hat \psi(G(x,y)-r) \Delta u(y)\ \dv(x)\dv(y)
 \notag =\int_{\Omega} \Delta u(y) \underbrace{\int_{\Omega'} \ \hat \psi(G(x,y)-r)\dv(x)}_{:=h(y,r)} \dv(y)\, .
\end{gather}
By the choice of $r_0$ in \eqref{eq_r_0}, the support of $h$ is contained in $\overline{\Omega''}$.

We claim that $h$ is a Lipschitz function and $\lim_{r\to \infty}\abs{h(y,r)}_{L^\infty(\Omega'')}=0$. Indeed, we observe that
\begin{gather}
 \nabla h = \int dx \ \psi(G(x,y)-r)\nabla G(x,y)\, ,
\end{gather}
and as long as $r\geq r_0$ and $x\in \Omega'$, $\psi(G(x,y)-r)\nabla G(x,y)$ is a smooth function of $y$ away from $x$ and supported in $\Omega''$ and the estimates in \eqref{dG_est} hold. Thus we obtain that $h$ is a Lipschitz function. 

Moreover, note that
\begin{gather}
 \hat \psi(G(x,y)-r)\leq \max\cur{G(x,y)-r+1;0}\leq \max \cur{C d(x,y)^{2-n}-r+1;0}
\end{gather}

Now, for a subharmonic function $u\in L^1$, we can use this fact to prove that the sequence $v(x,r)$ is an $L^1$ Cauchy sequence as $r\to \infty$. Indeed, for $R>r>0$,
\begin{gather}
 \int_{\Omega'}  \abs{v(x,r)-v(x,R)} \dv(x)=\int_{\Omega'}  v(x,r)-v(x,R)\ \dv(x)=\\
 \notag =\iint_{{\Omega'}\times {\Omega}} \qua{\hat \psi(G(x,y)-r)-\hat \psi(G(x,y)-R)} \Delta u(y)\ \dv(x)\dv(y)=\\
 \notag =\int_{\Omega} \Delta u(y) \underbrace{\int_{\Omega'} \ \qua{\hat \psi(G(x,y)-r)-\hat \psi(G(x,y)-R)} \dv(x)}_{:=h(y,r,R)} \dv(y)\, .
\end{gather}
$h(y,r,R)$ is a smooth function, so this equality makes sense distributionally. Moreover, $h(y,r,R)$ is a uniformly bounded sequence in $C^0$, that converges to $0$ as $r\to \infty$. Thus $v(x,r)$ is an $L^1$ Cauchy sequence in $r$. 

In order to prove that $v(x,r)\to u(x)$ as $r\to \infty$, we will prove that a convex combination of $v(x,r)$ for some $r\geq r_0$ converges to $u(x)$. This and the $L^1$ convergence of $v(x,r)$ prove the original convergence. 

In particular, for all $r_0$ fix some function $\alpha_{r_0}(r)$ such that
\begin{itemize}
 \item $\alpha_{r_0}\in C^\infty_c([0;\infty))$
 \item $\int \alpha_{r_0}(r) \ dr=1$
 \item $\forall r, \ 0\leq \alpha_{r_0}(r)\leq \frac 1 r$
 \item if $r\leq r_0$ or $r\geq 10 r_0$, $\alpha_{r_0}(r)=0$
\end{itemize}
Since $\int_{r_0}^{10 r_0} \frac {dr}{r} = \ln(10)>1$, this is possible. Consider the convex combinations
\begin{gather}
 w(x,r_0):=\int \alpha_{r_0}(r) v(x,r) dr = \int_0^\infty ds \int_{G(x,y)=s} u(y) \abs{\nabla G} \int dr \alpha_{r_0}(r)\dot \psi(s-r)=\\
 \notag = \int_{M} u(y) \abs{\nabla G}^2 \int dr \alpha_{r_0}(r)\dot \psi(G(x,y)-r)\dv(y)\, .
\end{gather}
Notice that by Cheng-Yau gradient estimates applied on a ball $\B {d(x,y)/3} x$, the positive harmonic function $G$ satisfies 
\begin{gather}
 \frac{\abs{\nabla G}}{G}\leq c (d(x,y)^{-1}+1)\, ,
\end{gather}
and since $G(x,y)\leq C d(x,y)^{2-n}$, for $\bar r\gg1$ we have
\begin{gather}
 \abs{w(x,\bar r)-u(x)}\leq \int_{M} \abs{u(y)-u(x)} \abs{\nabla G}^2 \int dr \alpha_{\bar r}(r)\dot \psi(G(x,y)-r)\dv(y)\leq \\
 \notag \leq \int_{M} \abs{u(y)-u(x)} d(x,y)^{-2} G^2(x,y) \frac{C}{G(x,y)-1} 1_{\cur{\bar r-1\leq G(x,y)\leq 10 \bar r+1}}\dv(y)\leq\\
 \notag \leq C\int_{A_{\bar r}(x)} \frac{1}{d(x,y)^n}\abs{u(y)-u(x)}\dv(y) \, ,
\end{gather}
where $A_{\bar r}(x)=\cur{(c^{-1}(10 \bar r+1))^{1/(2-n)}\leq d(x,y)\leq (C^{-1}(\bar r-1))^{1/(2-n)}}$. The last inequality is due to the local estimate $G(x,y)\ge c\,d(x,y)^{2-n}$ on $\Omega'$; see for instance \cite[Theorem 2.4]{MRS}. Let $x$ be a Lebesgue point for $u$, then $\abs{w(x,\bar r)-u(x)}\to 0$, and this proves a.e. convergence to $u(x)$ for the convex combination $w(x,\bar r)$, and thus also for the original $v(x,r)$.

\end{proof}

\vspace{5mm}

In the following, we will assume that either
$\l(x) \equiv 0$ or $\l(x) = \l$ is a positive constant. For each of these choices of $\l$, the existence of a smooth local monotone approximation has striking consequences in the regularity theory of  subharmonic distributions or on the validity of a variant of the traditional {\it Kato inequality}.

In the next two sections we are going to analyze separately these applications.

\section{Improved regularity of positive subharmonic distributions}\label{sect:reg}

By a {\it subharmonic distribution} on a domain $\Omega \subseteq M$ we mean a function $u \in L^{1}_{loc}(
\Omega)$ satisfying the inequality $\Delta u \geq 0$ in the sense of distributions. Note that, in this case, $\Delta u$ is a positive Radon measure.

Using the fact that a local monotone approximation exists, positive subharmonic distributions are necessarily in $W^{1,2}_{loc}$. Indeed an even stronger property can be proved, i.e., $u^{p/2}\in W^{1,2}_{loc}$ for all $p\in (1,\infty)$. This is the content of the next Theorem. To the best of our knowledge, in this generality the result is new also in the Euclidean setting.

\begin{theorem}\label{prop:reg positive subharm}
	Let $(M,g)$ be a Riemannian manifold. Let $u \geq 0$ be an $L^1_{loc}(M)$-subharmonic distribution. Then, $u\in L^\infty_{loc}$ and $u^{s/2}\in W^{1,2}_{loc}$ for any $s\in (1,\infty)$.\\[-10pt]
	
	Moreover, for all $\vp \in C^\infty_c(M)$ and $1<s\leq p$, we have the estimate
	\begin{align}\label{eq:caccioppoli-vp}
	 \frac{(s-1)^2}{s^2}\int_{\{\vp\ge1\}} |\nabla u^{s/2}|^2 &\le 
	 \int_M   u^{s}|\nabla \vp|^{2} 
	 \le \left(\int_{\operatorname{supp}(\nabla \phi)} u^p\right)^{s/p}\left(\int_M|\nabla \vp|^{\frac{2p}{p-s}}\right)^{(p-s)/p}\, ,
    \end{align}
\end{theorem}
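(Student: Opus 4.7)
The plan is to deduce the theorem from Theorem \ref{th:smoothing} by a Caccioppoli-type argument carried out at the smooth approximation level. First, I observe that the $L^\infty_{loc}$ conclusion is essentially free: applying Theorem \ref{th:smoothing} with $\lambda\equiv 0$ around each point yields $\Omega'\Subset M$ and smooth subharmonic approximants $u_k\in C^\infty(\overline{\Omega'})$ with $u\leq u_1$ a.e. on $\Omega'$. Since $u_1$ is continuous on $\overline{\Omega'}$, the function $u$ is locally bounded. It remains to establish the integral inequality, from which $u^{s/2}\in W^{1,2}_{loc}$ follows automatically.

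The core step is a weighted Caccioppoli estimate at the smooth level. Fix $1<s\leq p$, $\vp\in C^\infty_c(\Omega')$ and $\varepsilon>0$. Because $u_k+\varepsilon>0$ is smooth on $\overline{\Omega'}$, the function $\vp^2(u_k+\varepsilon)^{s-1}$ is a legitimate nonnegative $C^\infty_c$ test function. Testing $\Delta u_k\geq 0$ against it and integrating by parts yields
\[
(s-1)\int \vp^2|\nabla u_k|^2(u_k+\varepsilon)^{s-2}\;\leq\; -2\int \vp (u_k+\varepsilon)^{s-1}\langle\nabla u_k,\nabla\vp\rangle.
\]
Applying Cauchy-Schwarz on the right-hand side and using the identity $|\nabla(u_k+\varepsilon)^{s/2}|^2=(s/2)^2(u_k+\varepsilon)^{s-2}|\nabla u_k|^2$, absorbing one factor, yields
\[
\frac{(s-1)^2}{s^2}\int \vp^2|\nabla(u_k+\varepsilon)^{s/2}|^2 \;\leq\;\int |\nabla\vp|^2 (u_k+\varepsilon)^s.
\]

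The delicate step is passing to the double limit $\varepsilon\to 0^+$ and then $k\to+\infty$. For the right-hand side, $(u_k+\varepsilon)^s\to u_k^s$ uniformly as $\varepsilon\to 0$ since $u_k$ is bounded, and then $u_k^s\searrow u^s$ in $L^1_{loc}$ by dominated convergence with majorant $u_1^s$. For the left-hand side, the uniform bound forces the family $\{(u_k+\varepsilon)^{s/2}\}$ to be bounded in $W^{1,2}$ on $\supp(\vp)$; since it converges to $u^{s/2}$ in $L^2_{loc}$, uniqueness of weak limits identifies $\nabla(u_k+\varepsilon)^{s/2}\wto\nabla u^{s/2}$ in $L^2_{loc}$, certifying $u^{s/2}\in W^{1,2}_{loc}$. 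Lower semicontinuity of the $L^2$-norm under weak convergence then delivers
\[
\frac{(s-1)^2}{s^2}\int \vp^2|\nabla u^{s/2}|^2 \;\leq\;\int |\nabla\vp|^2 u^s,
\]
which, once restricted to $\{\vp\geq 1\}$ on the left, is the first inequality of \eqref{eq:caccioppoli-vp}. The second inequality is just H\"older's inequality applied to the pairing $u^s\cdot|\nabla\vp|^2$ on $\supp(\nabla\vp)$ with conjugate exponents $p/s$ and $p/(p-s)$. The chief obstacle is the weak-limit identification in the passage $k\to\infty$, but the decreasing monotonicity supplied by Theorem \ref{th:smoothing} makes this routine.
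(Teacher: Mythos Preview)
Your proof is correct and follows essentially the same approach as the paper: approximate via Theorem \ref{th:smoothing}, test $\Delta u_k\ge 0$ against $\vp^2(u_k+\varepsilon)^{s-1}$ to obtain a Caccioppoli inequality with the sharp constant $(s-1)^2/s^2$, then pass to the limit using weak $W^{1,2}$-compactness and lower semicontinuity. The only cosmetic differences are that the paper ties the two regularisations together by taking $u_k+1/k$ rather than a separate $\varepsilon$, and uses Young's inequality with parameter $\varepsilon=(s-1)/2$ in place of your direct Cauchy--Schwarz absorption; the resulting estimate is identical. One small imprecision: the Caccioppoli bound controls $\int \vp^2|\nabla(u_k+\varepsilon)^{s/2}|^2$, which yields a genuine $W^{1,2}$ bound only on sets of the form $\{\vp\ge c>0\}$ rather than on all of $\supp(\vp)$; the paper handles this by first taking $\vp$ to be a cutoff identically $1$ on a given $\Omega_1$, which is exactly what is needed for the $W^{1,2}_{loc}$ conclusion and for the stated inequality on $\{\vp\ge 1\}$.
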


\begin{proof}
Fix $1 < s\le p < +\infty$ and  let $\vp\in C^\infty_c(\Omega)$ to be a cut-off function such that $0\le \vp\le 1$ and $\vp\equiv 1$ on some bounded open set $\Omega_{1}\neq \emptyset$. Fix a bounded open set $\Omega$ such that $\supp(\vp)\Subset\Omega\Subset M$. According to Theorem \ref{th:smoothing} there is a smooth  approximation of $u$
\[
u_{k} \geq u_{k+1} \geq u \geq 0
\]
by (classical) solutions of $\Delta u_{k} \geq 0$ on $\Omega$. Note that, up to replacing $u_k$ with $u_k+1/k$, we can suppose that  $u_k>0$. 
Now, define $\psi = u_k^{s-1} \vp^{2}$.
	Since the $u_k$'s are smooth, strictly positive and subharmonic, we obtain
	\begin{align}\label{eq_pre_caccioppoli}
		0 &\geq -\int_M \Delta u_k \psi \\
		\notag& = \int_M g(\nabla u_k,\nabla \psi)\\
		\notag&= (s-1)\int_{M} \vp^{2}u_k^{s-2}|\nabla u_k |^{2} + 2\int_{M} \vp u_k^{s-1}g(\nabla u_k,\nabla \vp)\\
		\notag&\ge (s-1-\ve)\int_{M} \vp^{2}u_k^{s-2}|\nabla u_k |^{2} - \ve^{-1} \int_{M} u_k^{s}|\nabla \vp |^{2},
	\end{align}
	for any $\ve \in (0,s-1)$. Elaborating, we get the Caccioppoli inequality
	\begin{align}
		\ve(s-1-\ve)\int_{\Omega_{1}} u_k^{s-2}|\nabla u_k|^{2}
		&\leq	\ve(s-1-\ve)\int_{M} \vp^2 u_k^{s-2}|\nabla u_k|^{2}\\ \notag
		&\leq \int_{M} u_k^{s}|\nabla \vp|^{2}\\
		\notag&\leq \|\nabla \vp\|_\infty^2\int_{\Omega} u_s^{p}.
	\end{align}
	Choosing $\epsilon=(s-1)/2$ and using the fact that $u_k\ge u_{k+1}> 0$, this latter implies
	\begin{gather}\label{e:Caccioppoli intermediate}
	\int_{\Omega_{1}} |\nabla u_k^{s/2}|^{2}=\frac{s^2}{4}	\int_{\Omega_{1}} u_k^{s-2}|\nabla u_k|^{2}\leq	\frac{s^2}{(s-1)^2} \int_{\Omega} u_k^{s}|\nabla \vp|^2\leq	\frac{s^2\|\nabla \vp\|_\infty^2}{(s-1)^2} \int_{\Omega} u_1^{s},
	\end{gather}
	Noticing also that $\|u_k^{s/2}\|_{L^2(\Omega_{1})}\le \|u_1^{s/2}\|_{L^2(\Omega_{1})}$, we have thus obtained that the sequence $\{u_k^{s/2}\}$ is  bounded in $W^{1,2}(\Omega_{1})$, hence weakly converges in $W^{1,2}(\Omega_{1})$ (up to extract a subsequence) to some $v \in W^{1,2}(\Omega_{1})$. Since $u_k^{s/2}$ converges point-wise a.e. to $u^{s/2}$, it holds necessarily $v =u^{s/2}$ a.e. on $\Omega_{1}$.  In particular, $u^{s/2}\in W^{1,2}$ in a neighborhood in $\Omega_1$. Moreover, letting $k\to\infty$ (along the subsequence) in  \eqref{e:Caccioppoli intermediate}, we obtain
	\begin{gather}
		\int_{\Omega_{1}} |\nabla u^{s/2}|^{2}\leq\frac{s^2}{(s-1)^2} \int_{\Omega} u^{s}|\nabla \vp|^2\leq	\frac{s^2}{(s-1)^2}
		\left(\int_{\operatorname{supp}(\nabla \phi)} u^p\right)^{s/p}\left(\int_M|\nabla \vp|^{\frac{2p}{p-s}}\right)^{(p-s)/p},
	\end{gather}
where we used the H\"older inequality in the last step.	 \\[-10pt]

\end{proof}

\section{A variant of the Kato inequality}\label{sec:Kato}

The original inequality by T. Kato, \cite[Lemma A]{Ka}, states that if $u \in L^{1}_{loc}(M)$ satisfies $\Delta u \in L^{1}_{loc}(M)$ then
\[
\Delta | u | \geq  \sgn(u) \Delta u
\]
or, equivalently, if we set $u_{+} = \max(u,0) = (u + |u|)/2$, it holds
\[
\Delta u_{+} \geq  1_{\{ u>0\}} \Delta u.
\]
Note that, in these assumptions, $|\nabla u| \in L^{1}_{loc}(M)$ (see \cite[Lemma 1]{Ka}) and, therefore, $u \in W^{1,1}_{loc}(M)$.

Under the sole requirement that $u \in W^{1,1}_{loc}(M)$ is such that $\Delta u = \mu$ is a (signed) Radon measure, a precise form of the Kato inequality was proved by A. Ancona in \cite[Theorem 5.1]{An} elaborating on ideas contained in the paper \cite{Fu} by B. Fuglede.

In the special case where $M = \rr^{n}$ and $\Delta$ is the Euclidean Laplacian,  H. Brezis, \cite[Lemma A.1]{Br} and  \cite[Proposition 6.9]{Po}, observed that the local regularity of the function can be replaced by the condition that $u$ satisfies a differential inequality of the form $\Delta u \geq f$ in the sense of distributions, where $f \in L^{1}_{loc}$. The proof uses standard mollifiers to approximate $u$ by smooth solutions of the same inequality and, in particular, it works locally on $2$-dimensional Riemannian manifolds thanks to the existence of isothermal  local coordinates.  In the next result, we extend its validity to higher dimensional manifolds by using the existence of a  sequence of smooth subharmonic approximations.

\begin{proposition}[Brezis-Kato inequality]\label{prop-BK}
Let $(M,g)$ be a Riemannian manifold. If $u \in L^{1}_{loc}(M)$ satisfies $\Delta u \geq f$ in the sense of distributions, for some $f \in L^{1}_{loc}(M)$, then $u_{+} \in L^{1}_{loc}(M)$ is a distributional solution of  $\Delta u_{+} \geq 1_{\{u>0\}} f$.
\end{proposition}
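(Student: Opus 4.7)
The plan is to reduce the differential inequality $\Delta u \ge f$ to the subharmonic case $\Delta v \ge 0$, and then use the smooth monotonic approximation provided by Theorem~\ref{th:smoothing} as a replacement for the Euclidean mollification exploited in Brezis' argument. Since the conclusion is local, I fix an arbitrary relatively compact smoothly bounded domain $\Omega \Subset M$.

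Let $w \in W^{1,q}_0(\Omega)$, for some $q \in (1, n/(n-1))$, be the weak solution of the Dirichlet problem $\Delta w = f$ with zero boundary data; its existence for $f \in L^1(\Omega)$ is classical by Stampacchia duality. Setting $v := u - w \in L^1(\Omega)$ and integrating by parts against $0 \le \vp \in C^\infty_c(\Omega)$ yields $\Delta v \ge 0$ distributionally on $\Omega$. Theorem~\ref{th:smoothing} (applied with $\lambda \equiv 0$) then produces $\Omega' \Subset \Omega$ and classical smooth solutions $v_k$ of $\Delta v_k \ge 0$ with $v_k \searrow v$ a.e. and in $L^1(\Omega')$. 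Define $u_k := v_k + w$; then $u_k \in W^{1,q}_{loc}(\Omega')$, $u_k \searrow u$ a.e., and $\Delta u_k = \Delta v_k + f \in L^1_{loc}(\Omega')$ with $\Delta u_k \ge f$ distributionally.

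Since $\Delta u_k$ lies in $L^1_{loc}$, the original Kato inequality yields $\Delta (u_k)_+ \ge 1_{\{u_k > 0\}}\Delta u_k \ge 1_{\{u_k > 0\}} f$ distributionally. To pass to the limit $u_k \searrow u$ cleanly past the level set $\{u=0\}$, I replace $(\cdot)_+$ by the smooth primitive $H_\delta(t) = \int_0^t \eta_\delta(s)\,ds$, where $\eta_\delta \in C^\infty(\R)$ is non-decreasing with $\eta_\delta \equiv 0$ on $(-\infty, 0]$ and $\eta_\delta \equiv 1$ on $[\delta, \infty)$. Averaging the classical Kato inequality applied to $(u_k - s)_+$ against the non-negative weight $\eta_\delta'(s)\,ds$ and using $H_\delta(u_k) = \int_0^\infty \eta_\delta'(s)(u_k - s)_+\,ds$ gives $\Delta H_\delta(u_k) \ge \eta_\delta(u_k) f$ distributionally. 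Passing first $k \to \infty$ by dominated convergence (controlling $H_\delta(u_k) \le |u_1| + |w| + \delta$ and $|\eta_\delta(u_k)f| \le |f|$) yields $\Delta H_\delta(u) \ge \eta_\delta(u) f$; then passing $\delta \to 0^+$, with $H_\delta(u) \to u_+$ and $\eta_\delta(u) \to 1_{\{u>0\}}$ pointwise and dominatedly, gives $\Delta u_+ \ge 1_{\{u > 0\}} f$ on $\Omega'$. Since every point of $M$ lies in some such $\Omega'$, locality of distributions promotes the inequality to the whole of $M$.

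The main obstacle in this scheme is that, for general $f\in L^1_{loc}$, the Poisson potential $w$ fails to be smooth, so $u_k$ is not smooth either, and the classical chain rule cannot be applied directly to $(u_k)_+$. This is circumvented by invoking Kato's original version of the inequality, which only requires the $L^1_{loc}$-regularity of $\Delta u_k$ that we do have at our disposal; the smoothed-indicator trick then handles the residual, purely measure-theoretic, difficulty on $\{u=0\}$ in the limit.
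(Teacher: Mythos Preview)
Your proof is correct and follows essentially the same architecture as the paper's: subtract a Poisson potential to reduce to the subharmonic case, invoke Theorem~\ref{th:smoothing} for the monotone smooth approximation $v_k$, rebuild $u_k = v_k + w$ so that $\Delta u_k \in L^1_{loc}$ with $\Delta u_k \ge f$ pointwise a.e., apply a smoothed positive-part chain rule, and pass to the limit. The only notable difference is how the chain-rule step for $u_k$ is justified: the paper introduces a second approximation layer (Lemma~\ref{lemma-approx}, producing smooth $u_k^n$ with $u_k^n \to u_k$ and $\Delta u_k^n \to \Delta u_k$ in $L^1$) and applies the classical chain rule to the smooth $u_k^n$ with $H_\ve(t)=(t+\sqrt{t^2+\ve})/2$, whereas you invoke Kato's original inequality for functions with $L^1_{loc}$ Laplacian as a black box and then average the translated versions $(u_k-s)_+$ against $\eta_\delta'(s)\,ds$. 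Since the paper explicitly records Kato's inequality (and Lemma~\ref{lemma-approx} is precisely the tool that proves it on manifolds), your shortcut is legitimate in this context; it trades one layer of explicit approximation for a citation, and the layer-cake averaging is a pleasant alternative to the direct $H_\ve$ computation.
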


We shall use the following approximation Lemma, see \cite[Lemma 2]{Ka}.

\begin{lemma}\label{lemma-approx}
 Let $u \in L^{1}_{loc}(M)$ satisfy $\Delta u \in L^{1}_{loc}(M)$. Then, for any fixed compact coordinate domain $\Omega \Subset M$, there exists a sequence of functions $u_{k} \in C^{\infty}(\Omega)$ such that
 \[
 u_{k} \overset{L^{1}(\Omega)} {\longrightarrow} u\quad \text{and}\quad \Delta u_{k} \overset{L^{1}(\Omega)}{\longrightarrow} \Delta u.
 \]
\end{lemma}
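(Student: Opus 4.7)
The plan is to construct the smooth approximating sequence via the Dirichlet Green's function of $\Delta$ on a slightly larger domain, in the spirit of the representation formula exploited in the proof of Theorem~\ref{th:smoothing}.

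First, I would pick an open neighborhood $\tilde\Omega$ with $\Omega \Subset \tilde\Omega \Subset M$, still contained in the coordinate chart and with smooth boundary. Let $G = G_{\tilde\Omega}(\cdot,\cdot)$ denote the positive Dirichlet Green's function of the Laplace-Beltrami operator on $\tilde\Omega$, normalized so that $\Delta_x G(x,y) = -\delta_y$. Its existence and the pointwise upper bound $G(x,y) \le C\,d(x,y)^{2-n}$ (with an analogous logarithmic bound when $n=2$) are standard, and in particular ensure that $\sup_{y\in\tilde\Omega}\int_\Omega G(x,y)\dv(x) < +\infty$.

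Setting $f := \Delta u \in L^{1}(\tilde\Omega)$, I would write
\[
\phi(x) := -\int_{\tilde\Omega} G(x,y)\, f(y)\,\dv(y), \qquad h(x) := u(x) - \phi(x),\quad x\in\tilde\Omega.
\]
By Tonelli and the Green's function bound, $\phi \in L^{1}(\tilde\Omega)$; moreover $\Delta\phi = f$ distributionally (by the defining property of $G$ and Fubini), so $\Delta h = 0$ on $\tilde\Omega$, and Weyl's lemma for the Laplace-Beltrami operator (which has smooth coefficients in the chart) promotes $h$ to an element of $C^\infty(\tilde\Omega)$. Next, I would approximate $f$ by smooth functions $f_k \to f$ in $L^{1}(\tilde\Omega)$ — for instance via Euclidean mollification in the coordinate chart after multiplying by a fixed cutoff equal to $1$ on a neighborhood of $\overline\Omega$ — and set
\[
\phi_k(x) := -\int_{\tilde\Omega} G(x,y)\, f_k(y)\,\dv(y), \qquad u_k := h + \phi_k.
\]
Interior elliptic regularity applied to $\Delta\phi_k = f_k \in C^\infty$ gives $\phi_k \in C^\infty(\Omega)$, whence $u_k \in C^\infty(\Omega)$, and clearly $\Delta u_k = f_k \to \Delta u$ in $L^{1}(\Omega)$. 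Finally, Tonelli's theorem combined with the Green's function bound yields
\[
\|u-u_k\|_{L^{1}(\Omega)} = \|\phi-\phi_k\|_{L^{1}(\Omega)} \le \int_{\tilde\Omega} |f-f_k|(y)\left(\int_\Omega G(x,y)\,\dv(x)\right)\dv(y) \le C\|f-f_k\|_{L^{1}(\tilde\Omega)} \longrightarrow 0.
\]

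The main obstacle is that a naive Euclidean mollification $u_\epsilon = u*\rho_\epsilon$ in the coordinate chart does not deliver the second convergence: on a Riemannian manifold the Laplace-Beltrami operator has non-constant coefficients in local coordinates, and the commutator $[\Delta,*\rho_\epsilon]u$ need not vanish in $L^{1}$ — its second-order part escapes Friedrichs' classical commutator lemma because one only has $u\in W^{1,1}_{loc}$ (from Kato's Lemma 1) rather than $u \in W^{2,1}_{loc}$ or better. The Green's function construction neatly sidesteps this by splitting $u = h + \phi$, with $h$ smooth on the nose and $\phi$ a Green potential that is directly approximated by smoothing the $L^{1}$ datum $f = \Delta u$.
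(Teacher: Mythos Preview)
Your Green's function argument is correct and self-contained. Note, however, that the paper does not actually prove this lemma: it simply cites \cite[Lemma 2]{Ka} and moves on. Kato's original argument is plain Euclidean mollification, and the implicit claim is that it adapts to a coordinate chart.

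Your closing paragraph overstates the obstruction to that adaptation. Once Kato's Lemma~1 gives $u\in W^{1,1}_{loc}$, write $w_j=\partial_j u\in L^1_{loc}$ and $\Delta u = a^{ij}\partial_i w_j + b^j w_j$ in local coordinates. The zeroth-order part $b^j (w_j)_\epsilon \to b^j w_j$ in $L^1$ is immediate; for the principal part one applies Friedrichs' commutator lemma to the \emph{first-order} operator $a^{ij}\partial_i$ acting on $w_j\in L^1$, which only needs $w_j\in L^1$, not $w_j\in W^{1,1}$. Summing over $i,j$, the commutator goes to zero and the remainder $(a^{ij}\partial_i w_j)*\rho_\epsilon$ converges in $L^1$ because the full sum $a^{ij}\partial_i w_j=\Delta u - b^j w_j$ is in $L^1_{loc}$. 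So $W^{1,1}$ is exactly enough; one does not need $W^{2,1}$.

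That said, your decomposition $u=h+\phi$ with $h$ harmonic and $\phi$ the Green potential of $\Delta u$ is a clean alternative that bypasses Friedrichs entirely, and it is nicely consonant with the Green-function machinery already developed in the proof of Theorem~\ref{th:smoothing}. What the mollification route buys is brevity and no need for a larger auxiliary domain; what your route buys is that all the analysis is pushed onto the $L^1$ datum $f=\Delta u$, where smoothing is trivial. Both are valid.
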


\begin{proof}[Proof (of Proposition \ref{prop-BK})]
 We fix a smooth coordinate domain $\Omega \Subset M$ where subharmonic distributions posses a monotone approximation by smooth subharmonic functions. We consider the Dirichlet problem
 \[
\begin{cases}
 \Delta g = f & \text{ in }\Omega \\
 g = 0 & \text{ on }\partial \Omega
\end{cases}
 \]
and we note that, since $f \in L^{1}(\Omega)$, then it has a unique solution $g\in W^{1,1}_{0}(\Omega)$; see \cite[Theorem 5.1]{LSW}. The new function
 \[
 w = u - g \in L^{1}(\Omega)
 \]
is a subharmonic distribution, namely, $\Delta w \geq 0$.
 
Let  $w_{k} \geq w_{k+1} \geq w$ be a monotonic approximation of $w$ by smooth solutions of $\Delta w_{k} \geq 0$ and define
 \[
 u_{k} = w_{k} + g .
 \]
 Note that, by construction,
 \begin{equation}\label{approx}
 i)\, u_{k} \searrow u \text{ a.e. in }\Omega, \quad ii)\, u_{k} \to u \text{ in }L^{1}(\Omega), \quad iii)\,  \Delta u_{k} \geq f \text{ in }\Omega.
 \end{equation}
Moreover, since the $w_{k}$ are smooth,
 \[
 u_{k} \in L^{1}(\Omega) \quad \text{and} \quad \Delta u_{k} \in L^{1}(\Omega).
 \]
According to Lemma \ref{lemma-approx}, for each fixed $k$, let $\{ u_{k}^{n}\}_{n \in \nn}$ be a sequence of smooth functions satisfying
\[
u_{k}^{n} \overset{L^{1}(\Omega)}{\longrightarrow} u_{k} \quad \text{and} \quad \Delta u_{k}^{n} \overset{L^{1}(\Omega)}{\longrightarrow} \Delta u_{k},\quad \text{as }n\to+\infty.
\]
Now, let $H:\mathbb R \to \mathbb R$ be a smooth function satisfying $H'(t),H''(t) \geq 0$. For any $n \ge 1$,
\begin{align*}
	\Delta (H (u_{k}^{n})) \ge H'(u_{k}^{n}) \Delta u_{k}^{n}.
\end{align*}
For every $0\le \vp\in C^\infty_c(\Omega)$,
\begin{equation}\label{eq:Hve}
\int_{\Omega} H(u_k^{n})\Delta \vp \dx = \int_{\Omega} \Delta (H(u_k^{n}))\vp \dx 
\ge \int_{\Omega} \Delta u_k^{n} H'(u_k^{n})\vp \dx.
\end{equation}
We apply this latter with $H(t)=H_\ve(t)=(t+\sqrt{t^2+\ve})/2$. First, we let $n \to \infty$. 
Using the dominated convergence theorem we get
\[
\left|\int_{\Omega} (H_\ve(u_k^{n})-H_\ve(u_{k})) \Delta \vp \dx\right| \leq \|H_\ve'\|_{L^\infty}\| \Delta\vp\|_{L^\infty}\int_{\Omega} |u_k^{n}-u_{k}|\dx \longrightarrow 0,
\]
and 
\begin{align*}
&\left|\int_{\Omega} \Delta u^{n}_k H_\ve'(u_k^{n})\vp \dx-\int_{\Omega} \Delta u_{k}H_\ve'(u_{k})\vp \dx\right|\\
\le  &\left|\int_{\Omega} (\Delta u_k^{n}- \Delta u_{k}) H_\ve'(u_k^{n})\vp \dx\right|+\left|\int_{\Omega} \Delta u_{k} (H_\ve'(u_k^{n})-H_\ve'(u_{k}))\vp \dx\right|\\
\le  &  \|H_\ve'\|_{L^\infty}\|\vp\|_{L^\infty}\int_{\Omega}  |\Delta u_{k}^{n}-\Delta u_{k}|  \dx+\int_{\Omega} \Delta u_{k}  |H_\ve'(u_k^{n})-H_\ve'(u_{k})|\, |\vp| \dx \longrightarrow 0.
\end{align*}
Therefore, \eqref{approx}, \eqref{eq:Hve} and the fact that $H_{\ve}',\vp \geq 0$, yield
\begin{equation}\label{approx1}
\int_{\Omega} H_\ve(u_{k}) \Delta \vp \dx   
\ge \int_{\Omega} H_\ve'(u_{k}) \Delta u_{k} \vp \dx \geq \int_{\Omega}H_{\ve}'(u_{k})f \vp \dx.
\end{equation}
Next we recall that $f \in L^{1}$ and, by \eqref{approx} ii), $u_{k} \to u$ in $L^{1}(\Omega)$. Thus, repeating the above estimates we can take the limit as $k \to +\infty$ in \eqref{approx1} and get
\[
\int_{\Omega} H_\ve(u) \Delta \vp \dx  \geq \int_{\Omega}H_{\ve}'(u)f \vp \dx.
\]
Finally, we note that $H_\ve(t) \to t_+$ uniformly on $\mathbb R$. Moreover the $H'_\ve(t)$ are uniformly bounded both in $\ve$ and in $t$, and converge pointwise a.e. as $\ve\to 0$ to the characteristic function $1_{(0,+\infty)}(t)$. Letting $\ve\to 0$ and applying again the dominated convergence theorem gives
\[
\int_{\Omega} u_+ \Delta\vp \dx   
\ge \int_{\Omega}  1_{\{ u>0 \}} f \vp \dx,
\]
i.e. 
\[
\Delta u \geq 1_{\{u>0\}} f\quad \text{distributionally in } \Omega.
\]
In order to conclude the proof, take a covering of $M$ by coordinate domains $\Omega_{j} \Subset M$ where the monotone approximation exists and consider a subordinated partition of unity $\{\eta_k\}$ such that $\eta_j\in C^\infty_c(\tilde \Omega_j)$ and $\sum_j\eta_j=1$. Given  $\psi\in C^\infty_c(M)$, one has
\[
\int_{M} u_+ \Delta\psi \dv   
=\sum_j \int_{M} u_+ \Delta(\eta_j\psi) \dv \ge \sum_j \int_{\tilde \Omega_j} 1_{\{ u>0\}} f  \eta_j \psi  \dv= \int_{M} 1_{\{u>0\}} f\psi \dv.
\]
\end{proof}

A direct application of Theorem \ref{prop:reg positive subharm} and Proposition \ref{prop-BK} gives the following

\begin{corollary}\label{coro:BrezisPonce}
Let $(M,g)$ be a Riemannian manifold. If $u \in L^{1}_{loc}(M)$ satisfies $\CL u = \Delta u - \l u \geq 0$, with $\l\ge 0$, then $u_{+} \in L^{1}_{loc}(M)$ is a non-negative subharmonic distribution, i.e., $\Delta u_{+} \geq 0$. In particular, $u_{+} \in L^{\infty}_{loc}(M)$ and, for any $p \in (1,+\infty)$, $u_{+}^{p/2} \in W^{1,2}_{loc}(M)$.
\end{corollary}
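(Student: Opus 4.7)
The plan is to read this as a direct combination of the two previous results: Proposition \ref{prop-BK} (the Brezis-Kato inequality) turns the Schr\"odinger inequality $\Delta u \ge \lambda u$ into an inequality for $u_+$, and the sign hypothesis $\lambda\ge 0$ then kills the right-hand side to yield pure subharmonicity; Theorem \ref{prop:reg positive subharm} applied to the resulting non-negative subharmonic distribution $u_+$ produces the claimed regularity.

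First I would set $f := \lambda u$. Since $\lambda \in C^\infty(M)$ and $u \in L^1_{loc}(M)$, one has $f \in L^1_{loc}(M)$, and the hypothesis $\CL u \ge 0$ is equivalent to the distributional inequality $\Delta u \ge f$. By Proposition \ref{prop-BK}, it follows that $u_+ \in L^1_{loc}(M)$ satisfies
\[
\Delta u_+ \ge 1_{\{u>0\}}\, f = 1_{\{u>0\}}\, \lambda u = \lambda\, u_+
\]
in the sense of distributions, where the last equality is pointwise a.e. (on $\{u\le 0\}$ both sides vanish). Using $\lambda \ge 0$ and $u_+ \ge 0$, the right-hand side is a non-negative function, hence in particular
\[
\Delta u_+ \ge 0 \qquad \text{distributionally on } M,
\]
which is exactly the statement that $u_+$ is a non-negative subharmonic distribution.

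Once this is established, the regularity part is immediate: Theorem \ref{prop:reg positive subharm} applied to the function $u_+ \ge 0$ yields at once $u_+ \in L^\infty_{loc}(M)$ and $u_+^{p/2} \in W^{1,2}_{loc}(M)$ for every $p \in (1,+\infty)$. There is no genuinely hard step here; the only subtlety worth flagging is checking that $\lambda u \in L^1_{loc}$ so that Proposition \ref{prop-BK} indeed applies, and that the product $1_{\{u>0\}}\lambda u$ really coincides with $\lambda u_+$ as an $L^1_{loc}$ function, both of which are trivial because $\lambda$ is smooth and hence locally bounded.
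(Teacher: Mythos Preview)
Your proof is correct and follows exactly the approach intended in the paper, which states the corollary as ``a direct application of Theorem \ref{prop:reg positive subharm} and Proposition \ref{prop-BK}'' without spelling out the details. The only minor remark is that in the corollary $\lambda$ is a non-negative constant rather than a general smooth function, but this of course makes no difference to your argument.
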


\begin{remark}
		It is also possible to prove Corollary \ref{coro:BrezisPonce} by adapting the proof in \cite[Lemma A.1]{Br} and  \cite[Proposition 6.9]{Po} to the Riemannian setting, up to  replacing the approximation by convolution used therein with the monotone approximation provided by Theorem \ref{th:smoothing}.
		However, this latter strategy seems not to work for general $f\in L^1_{loc}$ (i.e. not of the form $\lambda u$), as in Proposition \ref{prop-BK}. Although the  special case $f=\lambda u$ would be enough for what needed in this article, we decided to state and prove the general form of the Brezis-Kato inequality as we feel that it can be useful to study more general PDEs on manifolds.
\end{remark}

Once we have a Brezis-Kato inequality, the $L^{p}$-PP property  enters in the  realm of Liouville-type theorems for $L^{p}$-subharmonic distributions. This is explained in the next

\begin{lemma} \label{lemma-Lp-Sub}
	Let $(M,g)$ be any Riemannian manifold. Consider the following $L^p$-Liouville property for subharmonic distributions, with $p\in(1,\infty)$:
	\begin{equation}\label{Lp-Sub}\tag{$L-\displaystyle{Sub}_p$}
		\begin{cases}
			\Delta u  \geq 0  \text{ on }M\\
			u \geq 0 \text{ a.e. on }M\\
			u \in L^{p}(M),
		\end{cases}
		\quad \Longrightarrow \quad
		u \equiv const \text{ a.e. on } M.
	\end{equation}
	Then
	\[
	\eqref{Lp-Sub} \quad \Longrightarrow \eqref{P}.
	\]
\end{lemma}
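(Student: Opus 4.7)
The strategy is to rewrite the hypothesis $(-\Delta+1)u\geq 0$ as a Schr\"odinger inequality for $v=-u$, then use the Brezis--Kato inequality (Proposition~\ref{prop-BK}) to convert it into a subharmonicity statement on the positive part, and finally invoke \eqref{Lp-Sub} to conclude. A small extra argument rules out nonzero constants at the end.

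In detail, suppose $u\in L^p(M)$ with $1<p<+\infty$ satisfies $(-\Delta+1)u\geq 0$ distributionally. Set $v=-u\in L^p(M)$; then $v$ satisfies $\Delta v - v\geq 0$ distributionally, i.e., $\Delta v\geq f$ with $f=v\in L^1_{loc}(M)$. By Proposition~\ref{prop-BK}, the positive part $v_+=u_-$ is a distributional solution of
\[
\Delta v_+ \;\geq\; \mathbf{1}_{\{v>0\}}\,v \;=\; v_+ \;\geq\; 0,
\]
so in particular $v_+$ is a non-negative subharmonic distribution on $M$. Moreover $0\leq v_+\leq |u|$ implies $v_+\in L^p(M)$.

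Applying the assumed Liouville property \eqref{Lp-Sub} to $v_+$ yields that $v_+\equiv c$ a.e.\ on $M$ for some constant $c\geq 0$. To conclude $c=0$, we reuse the distributional inequality $\Delta v_+\geq v_+$. Choosing any $0\leq \varphi\in C^\infty_c(M)$ not identically zero and testing gives
\[
0 \;=\; c\int_M \Delta\varphi\,\dv \;=\; \int_M v_+ \Delta\varphi\,\dv \;\geq\; \int_M v_+\varphi\,\dv \;=\; c\int_M\varphi\,\dv,
\]
which forces $c\leq 0$, hence $c=0$. Therefore $u_-=v_+=0$ a.e., i.e., $u\geq 0$ a.e., which is exactly \eqref{P}.

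The main work has essentially been done in the preceding sections: the only nontrivial inputs are the Brezis--Kato inequality (Proposition~\ref{prop-BK}), which handles the translation of the Schr\"odinger-type inequality into a purely subharmonic one on $u_-$, and the observation at the end that the distributional inequality $\Delta v_+\geq v_+$ persists even for a constant $v_+$ and forces that constant to vanish. Thus there is no serious obstacle; the lemma is essentially a packaging of the Brezis--Kato step with the Liouville hypothesis.
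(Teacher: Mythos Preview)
Your proof is correct and follows exactly the same route as the paper: set $v=-u$, apply the Brezis--Kato inequality (Proposition~\ref{prop-BK}) to obtain $\Delta v_+\geq v_+\geq 0$, invoke \eqref{Lp-Sub} to get $v_+$ constant, and conclude $v_+=0$. The only difference is that you explicitly justify why the constant must vanish via the test-function argument, whereas the paper simply asserts $(-u)_+=0$ directly from \eqref{Lp-Sub}; your added step is a welcome clarification but not a new idea.
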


\begin{proof}
	Suppose $u \in L^{p}(M)$ satisfies $\Delta u \leq u$. By Proposition \ref{prop-BK},
	\[
	\Delta (-u)_{+} \geq  (-u)_{+} \geq 0\text{ on }M,
	\]
	i.e. $(-u)_{+} \geq 0$ is a subharmonic distribution. Obviously, $(-u)_{+}\in L^{p}(M)$. It follows from \eqref{Lp-Sub} that $(-u)_{+} = 0$ a.e. on $M$. This means precisely that $u \geq 0$ a.e. on $M$, thus proving the validity of \eqref{P}.
\end{proof}

\section{Parabolicity, capacity and \texorpdfstring{\eqref{P}}{Pp} property}\label{sec:parabolic}

Recall that, given $\Omega\subset M$ a connected domain in
$M$ and $D\subset \Omega$ a compact set, for $1 \le p < \infty$, the $p$-capacity of $D$ in $\Omega$ is defined
by $\operatorname{Cap}_p(D, \Omega) := \inf \int_\Omega
|d\vp|^p$, where the infimum is among all Lipschitz functions compactly supported in $\Omega$ such that $\vp \ge 1$ on $D$. Moreover, $D$ is said to be $p$-polar if $\operatorname{Cap}_p(D, \Omega)=0$ for every $\Omega \Supset D$. Finally, $M$ is $p$-parabolic if there exists a compact set $D \subset M$ with non empty interior such that $\operatorname{Cap}_p(D, M)=0$.
In case $M$ has nonempty compact boundary $\partial M \not= \emptyset$, we call $M$ $p$-parabolic if its Riemannian double $\CD(M)$ is $p$-parabolic. This, in particular, applies to the ends  of a manifold with respect to a given smooth compact domain. These are understood in the wide sense of non-compact components obtained by removing the smooth compact set. Note also that any compact manifold with (or without) boundary is $p$-parabolic for every $1< q < +\infty$.

\begin{theorem}\label{th:caccio}
 Let $p\in(1,\infty)$. Let $M$ be an open connected (not necessarily complete) manifold with a finite number of ends $E_1,\dots, E_N$. If each end $E_j$ is $q_j$ parabolic for some  $\frac{2p}{p-1}<q_j\le\infty$, then \eqref{Lp-Sub} holds on $M$, so that in particular $M$ is $L^p$-PP.
\end{theorem}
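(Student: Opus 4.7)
The plan is to reduce \eqref{Lp-Sub} via Lemma~\ref{lemma-Lp-Sub} and then drive the Caccioppoli-type inequality of Theorem~\ref{prop:reg positive subharm} to zero, using a single global cutoff assembled end-by-end from the $q_j$-parabolicity data.

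First I would take $u \in L^p(M)$ with $u \geq 0$ and $\Delta u \geq 0$ distributionally, aiming to conclude that $u$ is a.e.\ constant. Theorem~\ref{prop:reg positive subharm} gives $u^{s/2} \in W^{1,2}_{loc}(M)$ for every $s > 1$, together with
\[
\frac{(s-1)^2}{s^2} \int_{\{\vp \geq 1\}} |\nabla u^{s/2}|^2 \leq \int_M u^s |\nabla \vp|^2 \leq \|u\|_{L^p(M)}^s\, \|\nabla\vp\|_{L^{2p/(p-s)}(M)}^2,
\]
for every $\vp \in \Lip_c(M)$, $0 \leq \vp \leq 1$, and every $1 < s \leq p$ (reading the second inequality as $\int u^p |\nabla \vp|^2 \leq \|u\|_{L^p}^p \|\nabla \vp\|_{L^\infty}^2$ in the limit $s = p$). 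The target is to produce $\vp_k$ with $\{\vp_k \geq 1\}$ exhausting $M$ and with the right-hand side tending to $0$; this will force $\nabla u^{s/2} \equiv 0$ a.e.\ on the connected $M$, whence $u$ is a.e.\ constant.

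To build the cutoffs I would fix a smooth compact domain $K_0 \subset M$ with $M \setminus K_0 = \bigsqcup_{j=1}^N E_j$, set $q := \max_j q_j \in (2p/(p-1), \infty]$, and define $s := p(q-2)/q$ (with $s = p$ when $q = \infty$), so that $s \in (1,p]$ and $2p/(p-s) = q$. Invoking the standard upward monotonicity of $q$-parabolicity in the exponent (cf.\ \cite{grig}) to upgrade each $q_j$-parabolic end $E_j$ to a $q$-parabolic one, I would select cutoffs $\eta_{j,k} \in \Lip_c(\overline{E_j})$, $0 \leq \eta_{j,k} \leq 1$, identically equal to $1$ on $\partial E_j$ and on compact subsets $K_{j,k}$ exhausting $E_j$, with $\|\nabla \eta_{j,k}\|_{L^q(E_j)} \to 0$ (the usual $L^\infty$ reading being in force when $q = \infty$). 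Gluing $\vp_k \equiv 1$ on $K_0$ and $\vp_k := \eta_{j,k}$ on $\overline{E_j}$ produces a well-defined $\vp_k \in \Lip_c(M)$ (the two definitions agree at value $1$ on each $\partial E_j$), with $\{\vp_k \geq 1\} \supset K_0 \cup \bigcup_j K_{j,k}$ exhausting $M$.

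Plugging $\vp_k$ into the Caccioppoli inequality, the right-hand side is at most $\|u\|_{L^p(M)}^s \bigl(\sum_j \|\nabla \eta_{j,k}\|_{L^q(E_j)}^q\bigr)^{2/q} \to 0$, while monotone convergence identifies the limit of the left-hand side with $\tfrac{(s-1)^2}{s^2}\int_M |\nabla u^{s/2}|^2$. Thus $\nabla u^{s/2} \equiv 0$ a.e., $u$ is a.e.\ constant, and \eqref{Lp-Sub} (hence \eqref{P}) holds on $M$. The main obstacle is that the single-exponent Caccioppoli requires a common $q$ for all ends while the $q_j$'s may differ; the upward monotonicity of $q$-parabolicity is precisely what dissolves this obstruction, and the endpoint $q = \infty$ is handled by the $L^\infty$ form of the final H\"older step.
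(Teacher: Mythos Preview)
Your argument has a genuine gap: the ``upward monotonicity of $q$-parabolicity in the exponent'' that you invoke to replace each $q_j$ by $q=\max_j q_j$ is false in general. There is no implication $q_j$-parabolic $\Rightarrow$ $q$-parabolic for $q>q_j$; a model surface $dr^2+f(r)^2\,d\theta^2$ with $f$ oscillating so that $\int^\infty f^{-1/(p-1)}\,dr=\infty$ while $\int^\infty f^{-1/(q-1)}\,dr<\infty$ is $p$-parabolic yet $q$-hyperbolic. Concretely, if one end is $q_1$-parabolic with $q_1$ only slightly above $\tfrac{2p}{p-1}$ and another end forces $q_2$ much larger, a cutoff with $L^{q_2}$-small gradient on the first end simply need not exist. (Taking $q=\min_j q_j$ instead fails too: $\mathbb{R}^3$ is $3$-parabolic but not $2$-parabolic, so neither direction of monotonicity holds.)

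The paper avoids any common exponent by a two-step bootstrap. First, for each end $E_J$ it applies \eqref{eq:caccioppoli-vp} not to $u$ but to the truncation $u_J$ equal to $(u-\|u\|_{L^\infty(U)})_+$ on $E_J$ and to $0$ elsewhere; since $u_J$ vanishes outside $E_J$, the right-hand side of the Caccioppoli inequality only sees $E_J$, so the end-specific exponent $s_J=p(q_J-2)/q_J$ together with cutoffs coming from the $q_J$-parabolicity of $E_J$ alone suffice to force $u_J\equiv 0$, i.e.\ $u$ is bounded on $E_J$. Second, this boundedness upgrades $u\in L^p(E_j)$ to $u\in L^{pq_j/(q_j-2)}(E_j)$ for every $j$, and now one can run Caccioppoli globally with $s=p$, splitting $\int_M u^p|\nabla\vp|^2$ end-by-end and applying H\"older on each $E_j$ with its own exponent $q_j$. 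The truncation in step one is the missing idea that lets the ends carry genuinely different exponents.
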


\begin{proof}
% \textcolor{red}{Without loss of generality we can suppose that $q_1=\min_{j=1}^Nq_j$.  SI USA?} 
Let $U$ be a compact set disconnecting $M$, so that $M=U\cup (\cup_{j=1}^N E_j)$ and let $u\in L^p(M)$ be a non-negative subharmonic function. Since $u\in L^{\infty}_{loc}$, we have that $\|u\|_{L^{\infty}(U)}<\infty$. Fix an end $E_J$ and define the new function 
	\[
	u_J:=\begin{cases}
		(u-\|u\|_{L^{\infty}(U)})_+,&\text{in }E_J,\\0,&\text{elsewhere}.
	\end{cases}
	\] 
	Note that also $u_J$ is non-negative, subharmonic and in $L^p(M)$. Let $\{M_k\}$ be an exhaustion of $M$, i.e. $M_k\Subset M_{k+1}$ are compact and $M=\cup_kM_k$, and assume wlog that $M_1\supset U$. By definition of $q_J$-parabolicity of the end $E_J$, we can find a sequence of smooth compactly supported cut-offs $\{\vp_k\}=\{\vp_{J,k}\}$ such that $\vp_k\equiv 1$ on $M_k\cup E_J^C$ and $\|\nabla\vp_k\|_{L^{q_J}(E_J)}\to 0$ as $k\to\infty$. Applying \eqref{eq:caccioppoli-vp} to $u_J$ with $\vp=\vp_k$ and $2p/(p-s)=q_J$, i.e. $s=\frac{q_J-2}{q_J}p$, we get that for all $k$:
    \begin{align}
	 \int_{\{\vp_k\ge1\}} |\nabla u_J^{s/2}|^2 &\le C\left(\int_{\operatorname{supp}(\nabla \phi_k)} u_J^p\right)^{s/p}\left(\int_M|\nabla \vp_k|^{q_J}\right)^{\frac 2{q_J}}\, ,
    \end{align}	
	Taking the limit as $k\to \infty,$ we deduce that $u_J$ is constant, hence null, on\label{key} $E_J$.	In particular $u$ is bounded on $E_J$, so that $u\in L^{\frac{pq_J}{q_J-2}}(E_J)$.
 	
     Now, by \eqref{eq:caccioppoli-vp} we get
 		\begin{align*}
 		\frac{(p-1)^2}{p^2}\int_{\{\vp\ge1\}} |\nabla u^{p/2}|^2 &\le\int_M   u^{p}|\nabla \vp|^{2} \\
 		&=\int_U   u^{p}|\nabla \vp|^{2} +\sum_{j=1}^N\int_{E_j}   u^{p}|\nabla \vp|^{2}
 	\end{align*}
 	for any $\vp\in C^{\infty}_c(M)$. Insert in this latter $\vp=\vp_k$, where $\{\vp_k\}$ is a family of cut-offs such that $\vp_k\equiv 1$ on $M_k$ and, for any $j=1,\dots,N$, $\|\nabla\vp_k\|_{L^{q_j}(E_j)}\to 0$ as $k\to\infty$. Thus,
 	\begin{align}\label{eq:caccioppoli-vp_1_ends} 
 		\int_{\{\vp_k\ge1\}} |\nabla u^{p/2}|^2 \le \frac{p^2}{(p-1)^2}\sum_{j=1}^N\left(\int_{E_j} u^{\frac{pq_j}{q_j-2}}\right)^{(q_j-2)/q_j}\left(\int_{E_j}|\nabla\vp_k|^{q_j}\right)^{2/q_j}
 	\end{align}
 goes to $0$ as $k\to\infty$. Hence $u$ is constant, and Lemma \ref{lemma-Lp-Sub} concludes the proof.
\end{proof}

Here we point out some immediate consequences of the previous theorem. 
First, we can interpret the $\infty$-parabolicity of the whole manifold as the geodesic completeness; see e.g. \cite{PS-Ensaios} and also \cite{AMP}. This implies the following  
\begin{corollary}\label{prop:Lp-Sub}
	Let $p\in(1,\infty)$. Let $(M,g)$ be a complete Riemannian manifold. Then \eqref{Lp-Sub}  holds, so that in particular $M$ is $L^p$-PP.
\end{corollary}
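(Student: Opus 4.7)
\textbf{Proof plan for Corollary \ref{prop:Lp-Sub}.} The plan is to derive the statement as a direct consequence of Theorem \ref{th:caccio}. The bridge between the hypotheses of the corollary (geodesic completeness) and those of the theorem ($q$-parabolicity of each end for some $q>2p/(p-1)$) is the well-known equivalence, recalled in the paper, that a connected Riemannian manifold is $\infty$-parabolic if and only if it is geodesically complete (see \cite{PS-Ensaios,AMP}). Since $\infty > 2p/(p-1)$ for every $p\in(1,\infty)$, this equivalence is exactly what is needed to trigger Theorem \ref{th:caccio}.

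First I would fix a relatively compact smooth domain $U\Subset M$ with non-empty interior (e.g.\ a small geodesic ball with smooth boundary). Because $\partial U$ is a compact smooth hypersurface, it has only finitely many connected components, and therefore $M\setminus U$ has only finitely many connected components $E_1,\dots,E_N$; these are the ends of $M$ relative to $U$. This produces the finite end decomposition required by Theorem \ref{th:caccio}.

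Next I would verify that each end $E_j$ is $\infty$-parabolic in the sense recalled in Section \ref{sec:parabolic}. Since $\overline{E_j}$ is a closed subset of $M$ with compact smooth boundary $\partial E_j\subset \partial U$, and $M$ is complete, the Riemannian double $\CD(\overline{E_j})$ is itself geodesically complete. By the completeness/$\infty$-parabolicity equivalence, $\CD(\overline{E_j})$ is $\infty$-parabolic, hence by definition $\overline{E_j}$ is $\infty$-parabolic. Thus each end qualifies with the choice $q_j=\infty$.

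Finally, applying Theorem \ref{th:caccio} with these $q_j=\infty>2p/(p-1)$ gives that the Liouville property \eqref{Lp-Sub} holds on $M$, and Lemma \ref{lemma-Lp-Sub} then yields the $L^p$-positivity preservation. The only very minor obstacle is the verification that $\infty$-parabolicity of $M$ transfers to each end: it is tempting to state this as automatic, but it in fact requires invoking the definition through the Riemannian double and the stability of completeness under passing to closed subsets with compact boundary. Once this is recorded, the rest is a mechanical application of Theorem \ref{th:caccio}.
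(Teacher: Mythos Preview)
Your proposal is correct and follows exactly the route the paper intends: the paper simply records that geodesic completeness is equivalent to $\infty$-parabolicity (citing \cite{PS-Ensaios,AMP}) and then invokes Theorem \ref{th:caccio}, while you have spelled out in more detail the finite-end decomposition and the verification that each end is $\infty$-parabolic via its Riemannian double. No new ideas are needed beyond what the paper already states.
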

\begin{remark} \label{rem-subquadratic} As it is clear from the proof of Corollary \ref{prop:Lp-Sub}, on any given complete Riemannian manifold and for any $p\in(1,\infty)$, the Liouville property \eqref{Lp-Sub} holds in the stronger form:
	\begin{equation*}
		\begin{cases}
			\Delta u  \geq 0  \text{ on }M\\
			u \geq 0 \text{ a.e. on }M\\
			u \in L_{loc}^{p}(M)\text{ and }\|u\|^p_{L^p(B_{2k}(o)\setminus B_k(o))}=o(k^2),
		\end{cases}
		\quad \Longrightarrow \quad
		u \equiv c \text{ a.e. on } M.
	\end{equation*}
	Accordingly, also the Positivity Preserving property holds in this class of functions larger than $L^p(M)$.
\end{remark}

\begin{remark}\label{rmk-Liouville}
	The endpoint cases $p=1$ and $p=+\infty$ must be excluded.
	The failure of \eqref{Lp-Sub} for these values of $p$ is well known. Namely, the hyperbolic space supports infinitely many bounded (hence positive) harmonic functions whereas, on the opposite side, positive, non-constant, $L^{1}$-harmonic functions on complete Riemann surfaces with (finite volume and) super-quadratic curvature decay to $-\infty$ was constructed by P. Li and R. Schoen in \cite{LS}. While the existence of these functions tells us nothing about the failure of the $L^{p}$-PP property, counterexamples also to this latter have been provided in \cite{BM}.
\end{remark}

Similarly, restating Theorem \ref{th:caccio} in the simplest case, we have:
\begin{corollary}
	Let $p\in(1,\infty)$. Let $M$ be an open (possibly incomplete) $q$-parabolic manifold for some $\frac{2p}{p-1}<q\le\infty$. Then $M$ is $L^p$-PP.
\end{corollary}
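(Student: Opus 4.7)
The plan is to reduce the statement to Theorem \ref{th:caccio} (really to its proof), but applied globally to $M$ itself rather than end by end. First, by Lemma \ref{lemma-Lp-Sub} it suffices to establish the Liouville-type property \eqref{Lp-Sub} on $M$. So I assume $u \in L^{p}(M)$ with $u \geq 0$ and $\Delta u \geq 0$ distributionally, and I aim to show that $u$ is constant.

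The next step is to set up the Caccioppoli test exponent. When $q < +\infty$, I choose $s = p(q-2)/q$, so that $2p/(p-s) = q$. The standing assumption $q > 2p/(p-1)$ is equivalent to $s > 1$, and clearly $s \leq p$, so $s$ is an admissible exponent in Theorem \ref{prop:reg positive subharm}. That theorem gives $u^{s/2} \in W^{1,2}_{loc}(M)$ together with the inequality
\begin{equation*}
\frac{(s-1)^{2}}{s^{2}} \int_{\{\varphi\geq 1\}} |\nabla u^{s/2}|^{2} \;\leq\; \Bigl(\int_{M} u^{p}\Bigr)^{s/p} \Bigl(\int_{M} |\nabla \varphi|^{q}\Bigr)^{2/q}
\end{equation*}
for every $\varphi \in C^{\infty}_{c}(M)$, and by the usual truncation/approximation this extends to $\varphi \in \mathrm{Lip}_{c}(M)$.

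Then I exploit the $q$-parabolicity of $M$ to manufacture the correct cut-offs: there exists an exhaustion $\{K_{k}\}$ of $M$ by compact sets together with $\varphi_{k} \in \mathrm{Lip}_{c}(M)$ with $0 \leq \varphi_{k} \leq 1$, $\varphi_{k} \equiv 1$ on $K_{k}$, and $\|\nabla \varphi_{k}\|_{L^{q}(M)} \to 0$ as $k \to +\infty$. This is the standard equivalent characterization of $q$-parabolicity (compare the discussion in the proof of Theorem \ref{th:caccio}). Plugging $\varphi = \varphi_{k}$ into the displayed inequality and letting $k \to +\infty$ forces $\nabla u^{s/2} = 0$ a.e., hence $u^{s/2}$ is locally constant on the connected manifold $M$, so $u$ is constant. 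For the endpoint $q = +\infty$, the $\infty$-parabolicity of $M$ is the same as geodesic completeness, so the conclusion is exactly Corollary \ref{prop:Lp-Sub}.

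The only genuinely non-routine issue I anticipate is the passage from the capacity-theoretic definition of $q$-parabolicity (vanishing of $\operatorname{Cap}_{q}(D,M)$ for some compact $D$ with nonempty interior) to the family of exhausting cut-offs with $\|\nabla \varphi_{k}\|_{L^{q}(M)} \to 0$; this is a standard fact but should be cited carefully, as it is what lets the argument run on $M$ itself instead of requiring a decomposition into finitely many ends.
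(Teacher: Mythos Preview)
Your proof is correct and follows exactly the approach the paper intends: the paper presents this corollary without proof, simply as ``restating Theorem~\ref{th:caccio} in the simplest case,'' and what you have written is precisely the global version of the first Caccioppoli step in that proof (the end-by-end decomposition and bootstrap in Theorem~\ref{th:caccio} are unnecessary when $M$ itself is $q$-parabolic). Your flag about the equivalence between vanishing $q$-capacity and the existence of exhausting cut-offs with $\|\nabla\varphi_k\|_{L^q}\to 0$ is well taken; the paper uses it implicitly in the proof of Theorem~\ref{th:caccio} as well.
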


The third corollary deals with manifolds of the form $(N\setminus K, h)$, where $(N,h)$ is a complete Riemannian manifold and $K$ is a compact set. Indeed, $p$-parabolicity of $N\setminus K$ is naturally related to the capacity of $K$ in $N$.
\begin{corollary}\label{coro:LpPP cap}
Let $p\in(1,\infty)$. Let $N$ be a complete Riemannian manifold and $K\subset N$ a compact set. Suppose that $K$ is $q$-polar for some $q>\frac{2p}{p-1}$. Then $M = N\setminus K$ is $L^p$-PP.
	\end{corollary}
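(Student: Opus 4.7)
The plan is to reduce to Theorem~\ref{th:caccio} by realizing $M = N\setminus K$ as an open manifold with finitely many ends, each $q_j$-parabolic for some $q_j > \frac{2p}{p-1}$. Two structurally different families of ends appear: those accumulating at $K$, for which the $q$-polarity hypothesis is tailor-made, and those escaping to infinity in $N$, for which the completeness of $N$ (equivalently $\infty$-parabolicity, as recalled after the theorem) provides what is needed. If $M$ is not connected, it suffices to argue on each component separately.

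First I would construct a disconnecting compact inside $M$. Let $\rho\in C^\infty(N)$ be a non-negative proper smoothing of $x\mapsto \dist(x,K)$ with $\{\rho=0\}=K$, and via Sard's theorem pick regular values $0<r_1<r_2$ so that $\{\rho\le r_1\}\subset\{\rho\le r_2\}$ are smooth compact neighborhoods of $K$. Set $U:=\{r_1\le \rho\le r_2\}\subset M$, a smooth compact domain. The complement $M\setminus U$ splits into an inner region $A=\{0<\rho<r_1\}$ and an outer region $B=\{\rho>r_2\}$. The outer region $B$ has finitely many connected components since its closed complement $\{\rho\le r_2\}$ is compact in $N$. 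The inner region $A$ can be arranged to have finitely many components by fattening $U$ within $\{\rho\le r_1\}$ if needed; the $q$-polar hypothesis forces $K$ to have empty interior and controlled local capacity, which allows one to choose such a shell cleanly so as to absorb pathologies near $K$.

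For each outer end $O$: it is a closed subset of the complete manifold $N$ with compact boundary, so its Riemannian double $\CD(O)$ inherits geodesic completeness, whence $O$ is $\infty$-parabolic. For each inner end $I$: the $q$-polarity of $K$ yields a sequence $\chi_j\in\Lip_c(\Omega)$ on a relatively compact $\Omega\Supset K$ contained in $\{\rho<r_1\}$ with $0\le \chi_j\le 1$, $\chi_j\equiv 1$ on $K$, and $\|\nabla \chi_j\|_{L^q(\Omega)}\to 0$. Multiplying $1-\chi_j$ by a fixed Lipschitz cutoff $\eta$ on $I$ that is $\equiv 1$ on a prescribed compact of $I$ and vanishes near $\partial U\cap \partial I$, one obtains, after extending by $\eta$ to the Riemannian double $\CD(I)$, a sequence of compactly supported Lipschitz functions with $L^q$-gradient norms tending to zero and whose level set $\{=1\}$ contains a fixed compact of positive measure. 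A diagonal argument (shrinking $\supp \chi_j$ and enlarging the $\equiv 1$ region of $\eta$) then yields $\operatorname{Cap}_q(D,\CD(I))=0$ for some compact $D$, i.e., $I$ is $q$-parabolic with $q>\frac{2p}{p-1}$. Theorem~\ref{th:caccio} then delivers \eqref{Lp-Sub} and hence \eqref{P} on $M$. The subtlest point I expect is the finite-ends reduction when $K$ is topologically wild (e.g.\ Cantor-like); this is a purely topological matter resolvable by working with a suitable neighborhood basis of $K$ and exploiting its capacitary smallness.
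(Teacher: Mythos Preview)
Your strategy---reduce to Theorem~\ref{th:caccio} by exhibiting $M=N\setminus K$ as a manifold with finitely many ends, the inner ones (near $K$) being $q$-parabolic via $q$-polarity and the outer ones being $\infty$-parabolic via completeness of $N$---is exactly the paper's. But you leave open precisely the point you flag as ``subtlest'': why the inner region has only finitely many components. ``Fattening $U$'' (thickening the shell) does not change the inner region $\{0<\rho<r_1\}$, and invoking ``capacitary smallness'' or ``a suitable neighborhood basis'' is not an argument. The actual fix is short: since $q>\tfrac{2p}{p-1}>2$, $q$-polarity forces Hausdorff dimension at most $n-q<n-1$, and a closed set of Hausdorff codimension strictly larger than $1$ cannot disconnect a connected open set; hence if the neighborhood $\{\rho<r_1\}$ of $K$ is chosen connected, there is exactly one inner end.

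The paper's execution is also cleaner on the $q$-parabolicity of that inner end. Rather than building cutoffs by hand from the polar sequence $\chi_j$ and passing to the double, it simply picks a smooth connected relatively compact $U\Subset N$ with $K\subset U$, observes that $\bar U$ is a compact manifold with boundary and hence $q$-parabolic, and invokes the standard fact that removing an interior $q$-polar set preserves $q$-parabolicity. This gives in one line that the single inner end $E_0=\bar U\setminus K$ is $q$-parabolic; the outer ends are the finitely many unbounded components of $N\setminus U$, each complete and hence $\infty$-parabolic, and Theorem~\ref{th:caccio} concludes.
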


\begin{proof}
Take a smooth relatively compact domain $U \Subset N$, with $K \subset U$. Note that $\bar U$ is $q$-parabolic as a compact manifold with smooth boundary. Removing interior $q$-polar sets does not affect the $q$-parabolicity of the space. Therefore, the end  $E_{0}= \bar D \setminus K$ of the open Riemannian manifold $M = N \setminus K$ is still $q$-parabolic. On the other hand, since $N$ is complete, $N \setminus D$ has a finite number of unbounded connected components $E_{1},\cdots,E_{k}$. Each of them is a complete, hence $\infty$-parabolic, end of $M$. A direct application of Theorem \ref{th:caccio} yields that $M$ is $L^{p}$-PP.
\end{proof}

	Suppose now that $\dim_{\mathcal H}(K)<q<n-\frac{2p}{p-1}$. Hence $\mathcal H^q(K)=0$. By standard potential theory this implies that $K$ is $(n-q)$-polar; see for instance \cite{HKM} or \cite[Theorem 3.5]{Tr}.
Accordingly, we have the following straightforward consequence of Corollary \ref{coro:LpPP cap}.
\begin{corollary}\label{coro:LpPP Haus}
	Let $p\in(1,\infty)$, let $N$ be an $n$-dimensional complete Riemannian manifold and let $K$ be compact set of $N$ such that $\dim_{\mathcal H}(K)<n-\frac{2p}{p-1}$. Then $N\setminus K$ is $L^p$-PP.
\end{corollary}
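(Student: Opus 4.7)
The plan is to deduce this corollary directly from Corollary \ref{coro:LpPP cap} by converting a Hausdorff dimension bound into a polarity statement. Concretely, the hypothesis $\dim_{\mathcal H}(K) < n - \frac{2p}{p-1}$ gives us a nontrivial gap, so we may pick some intermediate exponent $q$ with
\[
\dim_{\mathcal H}(K) < q < n - \frac{2p}{p-1}.
\]
By the very definition of Hausdorff dimension, this choice forces $\mathcal H^{q}(K) = 0$.

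The next step is to translate the vanishing of $\mathcal H^q(K)$ into capacitary information. By the classical fact from nonlinear potential theory recalled just before the statement (see \cite{HKM} or \cite[Theorem 3.5]{Tr}), on a complete $n$-dimensional Riemannian manifold, sets of vanishing $q$-dimensional Hausdorff measure are $(n-q)$-polar. Thus $K$ is $r$-polar for $r := n-q$. From the bound $q < n - \frac{2p}{p-1}$ we immediately get
\[
r = n - q > \frac{2p}{p-1}.
\]

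At this point all the hypotheses of Corollary \ref{coro:LpPP cap} are met: $N$ is complete, $K \subset N$ is compact, and $K$ is $r$-polar for some $r > \frac{2p}{p-1}$. Applying that corollary directly yields that $N \setminus K$ is $L^p$-Positivity Preserving, which is the conclusion.

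There is no genuine obstacle here, and the argument does not require any new estimate: the only delicate ingredient is the Hausdorff-measure-to-polarity implication, which is standard and already cited in the preceding paragraph. The statement is therefore best presented as a one-line corollary whose proof simply records the choice of $q$, invokes the potential-theoretic conversion, and cites Corollary \ref{coro:LpPP cap}.
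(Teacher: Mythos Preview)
Your proposal is correct and follows exactly the same route as the paper: pick $q$ with $\dim_{\mathcal H}(K)<q<n-\frac{2p}{p-1}$, use $\mathcal H^{q}(K)=0$ and the standard potential-theoretic implication to get that $K$ is $(n-q)$-polar with $n-q>\frac{2p}{p-1}$, and then invoke Corollary~\ref{coro:LpPP cap}. The paper records precisely this argument in the paragraph immediately preceding the statement.
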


A natural question arising from Corollary \ref{coro:LpPP Haus} is what happens in the threshold case $\frac{2p}{p-1}=q$ and below the threshold. In this case Hausdorff co-dimension $\frac{2p}{p-1}$ is not enough to preserve the \eqref{P} property, but uniform Minkowski control is. We start by proving the latter statement, and give an example of failure for Hausdorff co-dimension in example \ref{ex_not_good}.
\begin{proposition}\label{prop_Mink}
 Let $(N,h)$ be a complete Riemannian manifold, and let $E$ satisfy a uniform Minkowski-type estimate of the form
 \begin{gather}
  \vol\ton{\B r E} \leq C r^{\frac{2p}{p-1}}\, \qquad \text{for some } \ p>1\, .
 \end{gather}
The $N\setminus \overline E$ is an open manifold that satisfies the \eqref{Lp-Sub} property, and hence also the \eqref{P} property.
\end{proposition}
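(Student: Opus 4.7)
The plan is to reduce Proposition~\ref{prop_Mink} to the complete manifold case (Corollary~\ref{prop:Lp-Sub}) via a removable-singularity argument for $\overline E$. Let $0\leq u\in L^{p}(M)$ satisfy $\Delta u\geq 0$ distributionally on $M = N\setminus\overline E$. The Minkowski hypothesis forces
\begin{equation*}
\vol(\overline E)\leq \lim_{r\to 0^+}\vol\ton{\B{r}{\overline E}} \leq \lim_{r\to 0^+} Cr^{\frac{2p}{p-1}} = 0,
\end{equation*}
so $u$ defines an element of $L^{p}(N)$ (upon assigning arbitrary values on the null set $\overline E$). If one proves
\begin{equation*}
\int_N u\,\Delta\vp\,\dv \geq 0 \qquad \text{for every } 0\leq \vp\in C^{\infty}_{c}(N),
\end{equation*}
then Corollary~\ref{prop:Lp-Sub} applied on the complete manifold $N$ forces $u$ to be constant, so \eqref{Lp-Sub} holds on $M$, and Lemma~\ref{lemma-Lp-Sub} yields \eqref{P}.

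To effect the extension I would introduce smooth cut-offs $\chi_{\delta}:N\to[0,1]$ with $\chi_{\delta}\equiv 0$ on $\B{\delta}{\overline E}$ and $\chi_{\delta}\equiv 1$ outside $\B{1}{\overline E}$, obtained by a Greene-Wu type smoothing of $\log(d_{\overline E}/\delta)/\log(1/\delta)$ (where $d_{\overline E}(x)=\dist(x,\overline E)$), designed so that
\begin{equation*}
|\nabla\chi_{\delta}|\leq \frac{C}{d_{\overline E}\log(1/\delta)}, \qquad |\Delta\chi_{\delta}|\leq \frac{C}{d_{\overline E}^{2}\log(1/\delta)} \qquad \text{on }\{\delta\leq d_{\overline E}\leq 1\}.
\end{equation*}
For any $0\leq\vp\in C^{\infty}_{c}(N)$ the product $\chi_{\delta}\vp$ is a non-negative element of $C^{\infty}_{c}(M)$, so distributional subharmonicity on $M$ gives
\begin{equation*}
0 \leq \int_{M}u\,\Delta(\chi_{\delta}\vp)\,\dv = \int u\,\chi_{\delta}\,\Delta\vp\,\dv + 2\int u\,\nabla\chi_{\delta}\cdot\nabla\vp\,\dv + \int u\,\vp\,\Delta\chi_{\delta}\,\dv.
\end{equation*}
The first term converges to $\int_N u\,\Delta\vp\,\dv$ by dominated convergence. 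H\"older's inequality with exponents $(p,p/(p-1))$ bounds the remaining two by $C\|\nabla\chi_{\delta}\|_{L^{p/(p-1)}}$ and $C\|\Delta\chi_{\delta}\|_{L^{p/(p-1)}}$, and a coarea/Stieltjes computation using the Minkowski bound yields
\begin{equation*}
\int_{\{\delta\leq d_{\overline E}\leq 1\}}d_{\overline E}^{-\gamma}\,\dv \leq C + C\int_{\delta}^{1}r^{-\gamma-1+\frac{2p}{p-1}}\,dr,
\end{equation*}
which is uniformly bounded for the sub-critical $\gamma=p/(p-1)$ and grows like $\log(1/\delta)$ at the critical value $\gamma=2p/(p-1)$. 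Combining, the two error terms are respectively of order $O(\log(1/\delta)^{-1})$ and $O(\log(1/\delta)^{-1/p})$, both vanishing as $\delta\to 0$.

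The main obstacle is realising the pointwise bound on $\Delta\chi_{\delta}$: the distance $d_{\overline E}$ is only Lipschitz, and its distributional Laplacian contains singular contributions from the cut locus of $\overline E$ and terms coming from the Ricci curvature of $N$ via Laplace comparison. A Greene-Wu type mollification of $\log d_{\overline E}$ at a scale comparable to $d_{\overline E}$ supplies a smooth function with the required derivative bounds. A formally cleaner alternative would be to integrate the third term by parts using $u\in W^{1,2}_{loc}(M)$ (Theorem~\ref{prop:reg positive subharm} with $s=2$), thereby avoiding $\Delta\chi_{\delta}$ altogether; however, this transfers the burden to the control of $\|\nabla u\|_{L^{2}}$ on shrinking neighbourhoods of $\overline E$, where $\nabla u$ can genuinely blow up at a rate that exactly balances the decay of $\|\nabla\chi_{\delta}\|_{L^{2}}$, explaining why the exponent $2p/(p-1)$ in the Minkowski bound is critical and cannot be accommodated by the direct Caccioppoli argument of Theorem~\ref{th:caccio}.
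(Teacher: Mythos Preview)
Your overall strategy---extend the subharmonic distribution across $\overline E$ and then invoke Corollary~\ref{prop:Lp-Sub} on the complete manifold $N$---is exactly what the paper does (it packages the extension step as Lemma~\ref{lem:laplacian extension}). The difference is in the cut-off construction, and the paper's choice is considerably simpler than yours.

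You build a \emph{logarithmic} cut-off $\chi_{\delta}$ whose gradient is supported on the fixed band $\{\delta\le d_{\overline E}\le 1\}$ and then force the error terms to vanish by showing $\|\Delta\chi_{\delta}\|_{L^{p/(p-1)}}\to 0$. This works in principle, but---as you yourself flag---realising the pointwise bound $|\Delta\chi_{\delta}|\le C\,d_{\overline E}^{-2}/\log(1/\delta)$ requires a non-trivial Greene--Wu smoothing argument to handle $\Delta d_{\overline E}$ and the cut locus. The paper sidesteps this entirely: it takes a \emph{thin-annulus} cut-off $\psi_{k}$ with $\psi_{k}\equiv 1$ on $B_{1/k}(\overline E)$, $\operatorname{supp}\psi_{k}\subset B_{2/k}(\overline E)$, and the crude bounds $|\nabla\psi_{k}|\le ck$, $|\nabla^{2}\psi_{k}|\le ck^{2}$ (standard mollification of $d_{\overline E}$). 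With $q=p/(p-1)$ the Minkowski hypothesis gives
\[
\bigl(k^{2q}\,\vol(B_{2/k}(\overline E))\bigr)^{1/q}\le C,
\]
so the error term is bounded by $C\bigl(\int_{A_{k}}u^{p}\bigr)^{1/p}$, where $A_{k}=B_{2/k}(\overline E)\setminus B_{1/k}(\overline E)$. This tends to $0$ not because the cut-off norms decay, but because the domain $A_{k}$ shrinks and $u\in L^{p}$. In short: the paper trades your ``$\|\Delta\chi_{\delta}\|_{L^{q}}\to 0$ on a fixed region'' for ``$\|\Delta\psi_{k}\|_{L^{q}}$ bounded on a shrinking region'', which removes the need for any fine control on $\Delta d_{\overline E}$ and makes the construction of the cut-offs essentially trivial.
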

\begin{remark} 	In the assumptions of Proposition \ref{prop:LpPPMink} we have in particular that all $L^2$ harmonic functions on $N\setminus \overline E$ are necessarily constant. In the very special case where $E$ is a point, this result has been recently proved in \cite[Theorem 8]{HMW}.
\end{remark}

\begin{remark}
For several significant examples of sets with non-integer Hausdorff dimension the Minkowski content is controlled at the right dimensional scale (this is the case for instance for auto-similar fractal sets). Accordingly, in all these cases Proposition \ref{prop_Mink} gives also a simpler and more direct proof of Corollary \ref{coro:LpPP Haus}. Moreover, a recursive application of (the proof of) Proposition \ref{prop_Mink} permits also to deal with sets $K$ whose Minkowski content is not finite at the right dimension, yet suitably controlled. For instance, this occurs whenever 
		$K=\cup_{j=0}^N K_j$ can be decomposed as the union of a finite increasing family of compact sets $K_j\subset K_{j+1}$ such that $K_0$ and $K_{j+1}\setminus K_j$, $j=0,\dots,N-1$, have finite $(n-\frac{2p}{p-1})$-dimensional Minkowski content, as in the toy example $K=\{0\}\cup \{1/j\}_{j\ge 1}\subset \rr\subset\rr^n$. However, there exist compact sets for which the local Minkowski dimension is larger than the Hausdorff dimension around any point of $K$, so that they satisfy the $L^p$-PP property, but this latter can not be deduced though the technique introduced in Proposition \ref{prop_Mink}. Examples presenting this feature are auto-affine sets, \cite{Be,McM,Ba}, i.e., roughly speaking auto-similar type fractal sets of $\rr^n$ whose auto-similarity factor changes according to the direction.
\end{remark}

The proof of Proposition \ref{prop_Mink} is a direct consequence of the following possibly standard removable singularity lemma for $L^p$ functions.
\begin{lemma}\label{lem:laplacian extension}
 
 Let $(N,h)$ be a complete Riemannian manifold, and let $E$ satisfy a uniform Minkowski-type estimate of the form
 \begin{gather}
  \vol\ton{\B r E} \leq \C r^{\frac{2p}{p-1}}\, \qquad \text{for some } \ p>1 \ \ \text{and all }\ r\in (0;1] \, .
 \end{gather}
If $u\in L^p (N\setminus E)=L^p(N)$ and the distributional Laplacian of $u$ on $N\setminus \overline E$ is non-negative, then the distributional Laplacian of $u$ on $N$ is non-negative.
\end{lemma}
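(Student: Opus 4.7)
Fix an arbitrary $0\leq\varphi\in C^\infty_c(N)$; we need to show $\int_N u\,\Delta\varphi\,\dv\geq 0$. Since $\operatorname{vol}(B_r(E))\to 0$, the set $E$ has Lebesgue measure zero, hence $L^p(N\setminus E)=L^p(N)$ as the lemma assumes. The strategy is the classical removable-singularity trick: we multiply $\varphi$ by a cutoff $\chi_r$ vanishing in a neighborhood of $E$, test the hypothesis on $N\setminus\overline E$ with $\chi_r\varphi$, and show that the two commutator terms produced by the product rule tend to $0$ as $r\to 0$, exploiting the Minkowski estimate with the sharp exponent $2p/(p-1)$.

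The first step is to produce, for every sufficiently small $r\in(0,1]$, a smooth function $\chi_r\in C^\infty(N)$ with $0\leq\chi_r\leq 1$, $\chi_r\equiv 0$ on $B_r(E)$, $\chi_r\equiv 1$ on $N\setminus B_{2r}(E)$, and satisfying the quantitative bounds
\begin{equation*}
|\nabla\chi_r|\leq C/r,\qquad |\Delta\chi_r|\leq C/r^2,
\end{equation*}
on a fixed relatively compact neighborhood of $\operatorname{supp}\varphi$, with $C$ independent of $r$. Such a cutoff is obtained by composing a fixed one-dimensional bump $\eta:\mathbb R\to[0,1]$ (with $\eta\equiv 0$ on $(-\infty,1]$ and $\eta\equiv 1$ on $[2,\infty)$) with $\tilde\rho_E/r$, where $\tilde\rho_E$ is a Whitney-type smoothing of the $1$-Lipschitz distance $\rho_E=\operatorname{dist}(\cdot,E)$ satisfying $\tilde\rho_E\sim\rho_E$, $|\nabla\tilde\rho_E|\leq C$ and $|\Delta\tilde\rho_E|\leq C/\rho_E$ on the relevant chart.

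Since $\operatorname{supp}(\chi_r\varphi)\subset\operatorname{supp}(\varphi)\cap(N\setminus B_r(E))$ is a compact subset of $N\setminus\overline E$ and $\chi_r\varphi\geq 0$ is smooth, the hypothesis $\Delta u\geq 0$ on $N\setminus\overline E$ gives
\begin{equation*}
0\leq\int_N u\,\Delta(\chi_r\varphi)\,\dv=\int_N u\,\chi_r\Delta\varphi\,\dv+2\int_N u\,\langle\nabla\chi_r,\nabla\varphi\rangle\,\dv+\int_N u\,\varphi\,\Delta\chi_r\,\dv.
\end{equation*}
By dominated convergence the first term converges to $\int_N u\,\Delta\varphi\,\dv$ as $r\to 0$. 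For the remaining two error terms, note that $\nabla\chi_r$ and $\Delta\chi_r$ are supported in $A_r:=B_{2r}(E)\setminus B_r(E)$. H\"older's inequality together with the Minkowski bound gives
\begin{equation*}
\int_{A_r}|u|\,\dv\leq \|u\|_{L^p(B_{2r}(E))}\,\operatorname{vol}(B_{2r}(E))^{(p-1)/p}\leq C\,\|u\|_{L^p(B_{2r}(E))}\,r^{2},
\end{equation*}
because the exponent $2p/(p-1)$ is calibrated exactly so that $r^{2p/(p-1)\cdot(p-1)/p}=r^2$. Combining with $|\nabla\chi_r|\leq C/r$ yields $|\int u\,\langle\nabla\chi_r,\nabla\varphi\rangle|\leq Cr\,\|\nabla\varphi\|_\infty\|u\|_{L^p}\to 0$, while combining with $|\Delta\chi_r|\leq C/r^2$ yields $|\int u\,\varphi\,\Delta\chi_r|\leq C\,\|\varphi\|_\infty\,\|u\|_{L^p(B_{2r}(E))}$, which vanishes as $r\to 0$ by absolute continuity of the Lebesgue integral, since $\operatorname{vol}(B_{2r}(E))\to 0$. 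Letting $r\to 0$ in the displayed inequality concludes the proof.

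The main obstacle is the construction of cutoffs with the quadratic Laplacian bound $|\Delta\chi_r|\leq C/r^2$: a naive mollification of $\rho_E$ yields $|\Delta\tilde\rho_E|$ with uncontrolled blow-up near $E$, and one needs a Whitney-style smoothing at scale $\rho_E$ (or an equivalent local chart argument) to obtain the clean bound $|\Delta\tilde\rho_E|\leq C/\rho_E$. Once this is in hand, the balance between the $r^{-2}$ singularity of $\Delta\chi_r$ and the $r^2$ gain from $\operatorname{vol}(B_{2r}(E))^{(p-1)/p}$ is exactly neutral, and the $L^p$-absolute continuity of $u$ is what provides the final decay.
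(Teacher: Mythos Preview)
Your proposal is correct and follows essentially the same argument as the paper: both construct cutoffs $\chi_r$ (the paper uses $1-\psi_k$ with $k=r^{-1}$) obtained by smoothing the distance function to $E$, with $|\nabla\chi_r|\leq C/r$ and $|\Delta\chi_r|\leq C/r^2$, test the hypothesis against $\chi_r\varphi$, expand via the product rule, and use H\"older plus the Minkowski bound so that the $r^{-2}$ from $\Delta\chi_r$ exactly cancels the $r^2$ from $\operatorname{vol}(B_{2r}(E))^{(p-1)/p}$, with the final decay coming from $\|u\|_{L^p(B_{2r}(E))}\to 0$. Your write-up is in fact slightly more explicit than the paper's about the Whitney-type smoothing needed to get the Hessian/Laplacian bound on $\tilde\rho_E$, which the paper handles in one sentence.
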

\begin{proof}
 Notice that the uniform volume estimates imply that $E$ is a bounded set, and the estimates are stable under closure, meaning that $\vol\ton{\B r E} =\vol\ton{\B r {\overline E}}$. 

 Let $\psi_k$ be a sequence of cutoff functions with $\supp\ton{\psi_k}\subseteq \B{2k^{-1}}{E}$ and
 \begin{itemize}
  \item $\psi_k=1$ on $\B{k^{-1}}E$
  \item $\abs{\nabla \psi_k}_\infty\leq ck$
  \item $\abs{\nabla^2 \psi_k}_\infty \leq ck^2$. \end{itemize}
For instance, the $\psi_{k}$'s can be obtained by smoothing out the (Lip) distance function from $E$.  Let also $\vp\in C^\infty_c(N)$ be any non-negative test function. We have the distributional identity
 \begin{gather}
  \int_N \vp \Delta u =  \int_N \vp (1-\psi_k)\Delta u + \int_N \vp \psi_k \Delta u\, .
 \end{gather}
 Since $\vp (1-\psi_k)\in C^\infty_c(N\setminus \overline{E})$ and it is non-negative, by hypothesis $\int_N \vp (1-\psi_k)\Delta u\geq 0$. Moreover, we can estimate (here $A_k= \B {2k^{-1}}{E}\setminus \B {k^{-1}}{E}$):
 \begin{gather}
  \abs{\int_N \vp \psi_k \Delta u}\leq \int u \qua{\abs{\psi_k} \abs{\Delta \vp}_\infty +2\abs{\nabla \psi_k} \abs{\nabla \vp}_{\infty} + \abs{\Delta \psi_k}\abs{\vp}_\infty }\leq\\
  \notag \leq c \int u \qua{\abs{\psi_k} +\abs{\nabla \psi_k} + \abs{\Delta \psi_k}}\leq c\int_{\B {2k^{-1}}{E}} u +c \int_{A_k} k u + k^2 u \leq\\
  \notag \leq c\int_{\B {2k^{-1}}{E}} u +c \ton{\int_{A_k} u^p}^{\frac{1}{p}}\underbrace{\ton{k^{2q} \vol\ton{A_k}}^{\frac{1}{q}}}_{\leq \C}\leq c\qua{\int_{\B {2k^{-1}}{E}} u + \C \ton{\int_{A_k} u^p}^{\frac{1}{p}}}
 \end{gather}
where $q$ is the H\"older exponent $q=\frac{p}{p-1}$. Since $u\in L^p$, $\abs{\int_N \vp \psi_k \Delta u}\to 0$ as $k\to \infty$, and this concludes the proof.
\end{proof}

\begin{example}\label{ex_not_good}	We note that a control on the Hausdorff dimension in the threshold case is not enough. Namely, there exists a compact $K\subset \rr^4$ such that $\dim_{\mathcal H}(K)=0$ but $\rr^4\setminus K$ is not $L^p$-PP. 
	
	Such a $K$ can be constructed as a generalized Cantor set. Let $\cur{\alpha_j}_{j=1}^\infty$ be a sequence bounded between $0$ and $1$, and consider the generalized Cantor set $C_\alpha$ constructed by starting with $K_0=[0,1]$ and removing open middle $\alpha_1$-th of $K_0$ to produce $K_1$ and so on.

	Define $C_\alpha=\cap_{j=0}^\infty K_j$ and $K=C_\alpha\times\{0\}_{\rr^3}\subset \rr^4$. We choose $\alpha_j=\frac{10^{j}-2}{10^j}$. \footnote{The resulting set is a perfect set formed by all the numbers $x$ in $[0,1]$ such that in the decimal development of $x$ the digits between the $2^{j}$-th and the $(2^{j+1}-1)$-th are either all $0$ or all $9$.} Clearly, the $\dim_{\mathcal H}(C_\alpha)=0$ as it is smaller than $\log_{10^j}2=j^{-1}\log_{10}2$ for every $j\in\mathbb N$. Now, let $\mu$ be a Borel measure supported on $K$ such that $\mu(\rr^4)=\mu(K)=1$ and $\mu(I\times\{0\}_{\rr^3})=2^{-j}$ for every connected component $I$ of $K_j$. The measure $\mu$ can be constructed as follow. Let $\tilde{\mathcal H}^{\log_3^2}$ be the normalized $\log_32$-dimensional Hausdorff measure restricted to the standard mid-third Cantor set $C_{1/3}$, so that $\tilde{\mathcal H}^{\log_3^2}(C_{1/3})=1$. Namely, $C_{1/3}=C_{\alpha'}$ with $\alpha'_j=1/3$ for all $j$, and we name $K'_j$ the iterative steps in the construction of $C_{1/3}$. Consider a continuous increasing bijective function $f:[0,1]\to[0,1]$ constructed recursively as follows: for every $j\in\mathbb N$, $f$ maps $K'_{j+1}\setminus K'_j$ to $K_{j+1}\setminus K_j$. Define $\mu|_{C_\alpha}$ as the push-forward measure of $\tilde{\mathcal H}^{\log_3^2}$ through $f$ and extend it as zero to obtain a measure $\mu$ on the whole $\rr^4$. 

	An easy computation shows that $\diam(I)=\prod_{k=1}^{j}10^{-k}=10^{-\frac{j^2+j}{2}}$ for every connected component $I$ of $K_j$, while any two such components have distance at least $8$ times larger. Thus, for every $x\in\rr^4$, one has $\mu(B_\epsilon(x))\le 2^{-j}$ as soon as $\epsilon\le 4\cdot 10^{-\frac{j^2+j}{2}}$, i.e.
	$2\log_{10}(1/\epsilon)\ge  j^2+j-2\log_{10}(4)\ge j^2$, so that
	\[
	\mu(B_\epsilon(x))\le 2^{-\sqrt{\log_{10}(1/\epsilon^2)}}\le (\log^2(1/\epsilon))^{-2}
	\]
	for $\epsilon$ small enough. Applying a modified version of Lemma \ref{lem:valto-adam} (see Remark \ref{rmk:log growth}) we deduce that $\rr^4\setminus K$ is not $L^2$-PP. 
	
	Note that, in this example, $\mathcal H^0(K)=\infty$. We do not know if there exists examples of compact sets of $\rr^n$ of finite $(n-2p/(p-1))$-dimensional Hausdorff measure which are not $L^p$-PP.
\end{example}
 
In conclusion of this section we would like to point out how the proof of Lemma \ref{lem:laplacian extension} can be adapted to get straightforwardly an extension of \cite[Theorem 5]{GP} to the case where a set of small size is removed.

\begin{proposition}
\label{prop:density}
	Let $(N,h)$ be a complete Riemannian manifold. Fix $p>1$ and let $E$ satisfy a uniform Minkowski-type estimate of the form
	\begin{gather}
		\vol\ton{\B r E} \leq \C r^{q}\, \qquad \text{for some } \ q>p \ \ \text{and all }\ r\in (0;1] \, .
	\end{gather}
 Then the space $C^\infty_c(N\setminus E)$ is dense in the space 
	\[
	\widetilde W^{2,p}(N\setminus E):=\{u\in L^p(N\setminus E)\ :\ \Delta u\in L^p(N\setminus E)\}
	\]
	with respect to its canonical norm $\|f\|_{\widetilde W^{2,p}(N\setminus E)}^p=\|f\|^p_{L^{p}(N\setminus E)}+\|\Delta f\|^p_{L^{p}(N\setminus E)}$.
\end{proposition}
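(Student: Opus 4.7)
The plan is to adapt the cutoff computation from Lemma~\ref{lem:laplacian extension} and combine it with the density of $C^\infty_c(N)$ in $\widetilde W^{2,p}(N)$ established in \cite[Theorem 5]{GP} for complete Riemannian manifolds. Since the Minkowski hypothesis $\vol(B_r(E))\le \C r^q$ with $q>p>1$ forces $|E|=0$, given $u\in \widetilde W^{2,p}(N\setminus E)$ both $u$ and $f:=\Delta u$ extend across $E$ by zero to well defined elements of $L^p(N)$.

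In the first step I would show that the extended $u$ actually lies in $\widetilde W^{2,p}(N)$, namely that its distributional Laplacian on the whole $N$ coincides with the extended $f$. For any $\varphi\in C^\infty_c(N)$ and any $k$, the truncated test function $(1-\psi_k)\varphi$ (with $\psi_k$ the smoothed cutoff around $E$ constructed in the proof of Lemma~\ref{lem:laplacian extension}, i.e., $\psi_k\equiv 1$ on $B_{1/k}(E)$, $\supp(\psi_k)\subset B_{2/k}(E)$, $|\nabla\psi_k|\le ck$ and $|\nabla^2\psi_k|\le ck^2$) lies in $C^\infty_c(N\setminus E)$, so
\[
\int_N u\,\Delta\bigl[(1-\psi_k)\varphi\bigr]\,\dv \;=\; \int_N f\,(1-\psi_k)\varphi\,\dv
\]
holds by the definition of $\Delta u$ on $N\setminus E$. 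The right-hand side converges to $\int_N f\varphi\,\dv$ by dominated convergence as $k\to\infty$; expanding the Laplacian on the left-hand side via the product rule, the only nontrivial contributions are the cross-terms $\int u\,\varphi\,\Delta\psi_k$ and $\int u\,g(\nabla\psi_k,\nabla\varphi)$, which are bounded exactly as in Lemma~\ref{lem:laplacian extension}: H\"older's inequality combined with the Minkowski estimate $\vol(A_k)\le \C k^{-q}$ on $A_k=B_{2/k}(E)\setminus B_{1/k}(E)$, together with $\|u\|_{L^p(A_k)}\to 0$, forces both quantities to vanish in the limit. This yields $\int u\Delta\varphi=\int f\varphi$ for every $\varphi\in C^\infty_c(N)$, so $u\in \widetilde W^{2,p}(N)$.

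In the second step I would invoke \cite[Theorem 5]{GP} on the complete manifold $N$ to obtain a sequence $\{w_j\}\subset C^\infty_c(N)$ with $w_j\to u$ in $\widetilde W^{2,p}(N)$. To upgrade these approximants to elements of $C^\infty_c(N\setminus E)$ I would further truncate by setting $v_{j,k}:=(1-\psi_k)w_j$ and let $k\to\infty$ for each fixed $j$. A product-rule expansion
\[
\Delta(w_j\psi_k)\;=\;\psi_k\Delta w_j + w_j\Delta\psi_k + 2g(\nabla\psi_k,\nabla w_j),
\]
together with the uniform bounds on $w_j$, $\nabla w_j$, $\Delta w_j$ and the Minkowski estimate, shows $v_{j,k}\to w_j$ in $\widetilde W^{2,p}(N)$ as $k\to\infty$; a standard diagonal extraction $v_{j,k(j)}\to u$ in $\widetilde W^{2,p}(N\setminus E)$ then completes the argument.

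The hard part, both in step one and in step three, is the balance between $|\Delta\psi_k|\le ck^2$ and the shrinking annulus volume $\vol(A_k)\le \C k^{-q}$ appearing in the cross terms of the product-rule expansion. This is exactly where the Minkowski exponent $q>p$ enters, playing the same role as the critical Minkowski exponent in the key H\"older--Minkowski computation of Lemma~\ref{lem:laplacian extension}.
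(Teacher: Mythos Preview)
Your argument is essentially identical to the paper's own proof: both first extend $u$ to an element of $\widetilde W^{2,p}(N)$ by the cutoff technique of Lemma~\ref{lem:laplacian extension}, then invoke \cite[Theorem~5]{GP} on the complete manifold $N$ to produce approximants in $C^\infty_c(N)$, and finally multiply by $(1-\psi_k)$ and use the Minkowski bound on $\vol(A_k)$ to pass to $C^\infty_c(N\setminus E)$. The only difference is cosmetic: you spell out the extension step explicitly, whereas the paper simply refers back to the proof of Lemma~\ref{lem:laplacian extension}.
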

\begin{proof}
	Let $f\in \widetilde W^{2,p}(N\setminus E)$. Then $f	\in L^p(N)$, and by the proof of Lemma \ref{lem:laplacian extension} it is clear that the distributional Laplacian of $f$ on $N$, denoted again by $\Delta f$ is in $L^p(N)$.
	By a result due to O. Milatovic, there exists a sequence of functions $\eta_j$ in $C^\infty_c(N)$ which converges to $f$ in $\widetilde{W}^{2,p}(N)$ as $j\to\infty$; see \cite[Theorem 5]{GP}.
Fix $j$ and let $\psi_k$ be the cut-off functions introduced in the proof of Lemma \ref{lem:laplacian extension}. To conclude the proof, it suffices to show that $\psi_k\eta_j$ converges to $\eta_j$ in $\widetilde{W}^{2,p}(N\setminus E)$ as $k\to\infty$. To this end, compute 
	\begin{align*}
		\|\eta_j-\psi_k \eta_j\|_{\widetilde{W}^{2,p}(N\setminus E)}&= \|\eta_j(1-\psi_k)\|_{L^p(N\setminus E)}+\|(1-\psi_k)\Delta \eta_j\|_{L^p(N\setminus E)} \\
		&+ \| \eta_j\Delta\psi_k\|_{L^p(N\setminus E)} + 2\| |\nabla \eta_j||\nabla\psi_k|\|_{L^p(N\setminus E)}\nonumber
		\end{align*}
	The first two terms on the RHS vanish by the dominated convergence theorem. Moreover,
\begin{align*}
	&\| \eta_j\Delta\psi_k\|_{L^p(N\setminus E)}\le \|\eta_j\|_{L^\infty}\| \Delta\psi_k\|_{L^p(N\setminus E)}\le c \|\eta_j\|_{L^\infty} k^{2-2q/p}\to 0, \\&\| |\nabla \eta_j||\nabla\psi_k|\|_{L^p(N\setminus E)}\le \||\nabla \eta_j|\|_{L^\infty}\| |\nabla \psi_k|\|_{L^p(N\setminus E)}\le c \||\nabla \eta_j|\|_{L^\infty} k^{1-2q/p}\to 0, 
\end{align*}	
as $k\to\infty$.

\end{proof}

\section{Removing sets of large size}\label{sec_Kbig}
As one might expect, Corollary \ref{coro:LpPP Haus} doesn't hold if the set $K$ is too big. Counterexamples can be easily built by looking at solutions of $\Delta f= \mu$, where $\mu$ is the right dimensional Hausdorff measure restricted to $K$.

\begin{example}
 As a model example, consider $\R^n$ with $n\geq 3$ and a subset $K$ of the form $K= P \cap \overline{\B 1 0}$, where $P$ is a $k$-dimensional plane passing through the origin. If we focus on the manifold $\R^n\setminus K$, by Proposition \ref{prop_Mink} the \eqref{P} and \eqref{Lp-Sub} properties hold as long as $k\leq n-\frac{2p}{p-1}$.
 
 In the other cases, we can consider the measure $\mu_K= \CH^k|_K$ and consider the fundamental solutions of
 \begin{gather}
  \Delta u_1 = -\mu_K\, , \qquad \qquad -\Delta u_2 + u_2 = \mu_K\, .
 \end{gather}
These solutions $u_1$ and $u_2$ can be easily written in terms of the Green's function $\abs{x-y}^{2-n}$ and the Bessel potential $J_2(x-y)$:
\begin{gather}
 u_1(x)=\int_{\R^n} d\mu_K(y) \ \abs{x-y}^{2-n} \, , \qquad \qquad u_2(x)=\int_{\R^n} d\mu_K(y) \ J_2(x-y)
\end{gather}

By the estimates in \cite{adams1} (see also \cite[Lemma 10.12]{Po}), the functions $u_1$ and $u_2$ belong to $L^p(\R^n\setminus K) =L^p(\R^n)$, and they are clearly solutions of
\begin{gather}
 \Delta u_1 =0 \qquad \text{on} \ \ \R^n\setminus K\\
 \Delta u_2 -u_2=0 \qquad \text{on} \ \ \R^n\setminus K
\end{gather}
However, $u_1$ is not a constant, and $u_2$ is not non-negative (for example $u_2(x)\to -\infty$ as $x\to 0$). Thus $u_1$ is a counterexample to property \eqref{Lp-Sub} on $\R^n \setminus K$, and similarly $u_2$ is a counterexample to property \eqref{P} on $\R^n\setminus K$.

\end{example}

In general, we have the following.
\begin{proposition}\label{prop_no_good}
 Let $(N,h)$ be a complete Riemannian manifold of dimension $n\geq 2$, and $\emptyset\neq K\Subset N$ have Hausdorff dimension $k$. If $k>n-\frac{2p}{p-1}$ for some $p\in(1,\infty)$, then the open manifold $(N\setminus K,h)$ does not enjoy either the \eqref{Lp-Sub} or the \eqref{P} property. \end{proposition}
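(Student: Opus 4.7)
The plan is to construct explicit counterexamples on $N \setminus K$ mirroring the Euclidean model example preceding the proposition, replacing the global Riesz and Bessel kernels there by Dirichlet Green's functions on a bounded neighborhood of $K$. The starting point is a Frostman-type lemma: since $\dim_{\mathcal H}(K) = k > n - 2p/(p-1)$, one can choose $s \in (n-2p/(p-1),\, k)$ and obtain a non-trivial positive finite Borel measure $\mu$ supported on $K$ satisfying $\mu(B_r(x)) \leq C r^s$ for every $x \in N$ and every small $r>0$.

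Fix a smooth relatively compact domain $\Omega \Subset N$ containing $K$, and let $G_\lambda(x,y)$ denote the Dirichlet Green's function on $\Omega$ of the operator $-\Delta + \lambda$, for $\lambda \in \{0,1\}$. Both kernels are non-negative, symmetric, smooth outside the diagonal, vanish on $\partial\Omega$, and satisfy $0 \leq G_1 \leq G_0 \leq C\, d(x,y)^{2-n}$ locally. Define
\[
u_1(x) = \int_K G_0(x,y)\, d\mu(y), \qquad u_2(x) = -\int_K G_1(x,y)\, d\mu(y), \qquad x \in \Omega,
\]
and extend both by zero to all of $N$. On $\Omega$ one then has, in the distributional sense, $\Delta u_1 = -\mu$ and $(-\Delta + 1) u_2 = -\mu$, while $u_1 \geq 0 \geq u_2$ are continuous on $\overline\Omega$, smooth in $\Omega \setminus K$ (in particular across $\partial\Omega$ from the inside), and vanish on $\partial\Omega$.

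The analytic core is the integrability $u_1 \in L^p(N)$ (from which $u_2 \in L^p(N)$ follows since $G_1 \leq G_0$). A dyadic decomposition of the defining integral combined with the Frostman bound $\mu(B_r)\leq Cr^s$ yields the pointwise estimate $u_1(x) \leq C\, d(x,K)^{s-n+2}$ when $s < n-2$ (and $u_1$ bounded when $s \geq n-2$); the $L^p(\Omega)$ integrability of this blow-up is equivalent to the condition $s > n - 2p/(p-1)$, exactly as in the Euclidean case (cf.\ \cite{adams1} and the example above). To check that $u_1$ violates \eqref{Lp-Sub} on $N \setminus K$, I apply Green's second identity on $\Omega$ against an arbitrary test function $0 \leq \varphi \in C^\infty_c(N\setminus K)$: using that $u_1 = 0$ on $\partial\Omega$ and that $\varphi$ vanishes near $K = \supp(\mu)$,
\[
\int_N u_1\, \Delta\varphi \,\dv = - \int \varphi\, d\mu - \int_{\partial\Omega} \varphi\, \frac{\partial u_1}{\partial n}\, dS = - \int_{\partial\Omega} \varphi\, \frac{\partial u_1}{\partial n}\, dS \geq 0,
\]
because $\partial u_1/\partial n \leq 0$ on $\partial\Omega$ (outward normal, $u_1 > 0$ inside, $u_1 \equiv 0$ on $\partial\Omega$). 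Thus $u_1$ is a non-constant, non-negative, $L^p$-integrable distributional subharmonic function on $N \setminus K$, violating \eqref{Lp-Sub}. An entirely analogous boundary-layer computation shows that $(-\Delta + 1) u_2 \geq 0$ distributionally on $N \setminus K$, with $u_2 \leq 0$ non-trivial and in $L^p$, which contradicts \eqref{P}.

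The main obstacle is the Frostman lemma together with the sharp Adams-type $L^p$-estimate for the Green potential of an $s$-dimensional measure; both are classical in Euclidean space and transfer to the Riemannian setting via local coordinates, since $G_0$ agrees with the Euclidean Newtonian kernel up to a bounded smooth correction near the diagonal, and away from $K$ the potentials are smooth and compactly supported, making the global $L^p$-integrability on $N$ automatic.
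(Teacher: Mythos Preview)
Your overall architecture matches the paper's: Frostman measure on $K$, local Dirichlet Green/Bessel potentials on a smooth $\Omega \Supset K$, then check that the resulting potentials are $L^p$, nontrivial, and (sub)solutions on $N\setminus K$. Your boundary-layer computation for the zero extension across $\partial\Omega$ is in fact more explicit than what the paper writes down, and is a genuine plus.

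There is, however, a real gap in your $L^p$ step. The pointwise bound $u_1(x)\le C\,d(x,\supp\mu)^{\,s-n+2}$ is correct, but its $L^p(\Omega)$-integrability is \emph{not} equivalent to the condition $s>n-\tfrac{2p}{p-1}$. Integrating that bound via the layer--cake formula requires a Minkowski-type estimate
\[
\vol\big(B_r(\supp\mu)\big)\le C\,r^{\,n-m}
\]
and then gives finiteness only when $s>n-2-(n-m)/p$. Since for a general compact set the (upper) Minkowski dimension $m$ can exceed its Hausdorff dimension by an arbitrary amount (cf.\ the paper's own Example~\ref{ex_not_good}), no such bound is available from the hypothesis $\dim_{\mathcal H}(K)=k$ alone, and the argument breaks down precisely for the ``fat'' sets that make the proposition interesting.

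The fix is exactly what the paper does in Lemma~\ref{lem:valto-adam} (the Adams estimate you cite): one does \emph{not} pass through the pointwise bound but estimates $\big\|\int G_0(\cdot,y)\,d\mu(y)\big\|_{L^p}$ directly from the Frostman condition $\mu(B_r)\le Cr^s$, using Minkowski's integral inequality together with the elementary identity $\int_N \mu(B_r(x))\,dx \le C r^n\,\mu(N)$. That route uses only the measure bound, never the geometry of $\supp\mu$, and yields $u_1\in L^p$ for all $p\in\big(\tfrac{n}{n-2},\tfrac{n-s}{n-2-s}\big)$, which is exactly the range you want. If you replace your integrability paragraph by this argument, the rest of your proof goes through.
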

\begin{proof} 
		The proof of this proposition is fairly standard, and follows the ideas laid out in the previous example.
		
First, we remark that the result is trivial if 
$n=2$, as in this case for a fixed $y_0\in K$ the Bessel potential $J_2(x,y_0)$ is in $L^p$ and thus gives a counterexample to the \eqref{P} property. Similarly, the Green's function of a $2$ dimensional manifold is in $L^p$ for all $p\in (1,\infty)$, and can be used to produce a counterexample to the \eqref{Lp-Sub} property.

Moreover, if $k\ge n-2$, then we can replace $K$ with a subset $K'\subset K$ of Hausdorff dimension $k'\in (n-\frac{2p}{p-1},n-2)$. If $N\setminus K'$ does not enjoy the $L^p$-PP property, than \textit{a fortiori} also $N\setminus K$ does not enjoy it. 
Hence, in the following 
we can thus assume that
\begin{gather}\label{eq_kPDCD1K}
n\geq 3 \qquad \text{and}\qquad  n-2>k>n-\frac{2p}{p-1}\, .
\end{gather}

 Since all the ideas involved in the proof are local, it is convenient to assume that $K\Subset \Omega'\Subset \Omega$, where $\Omega$ is a relatively compact coordinate neighborhood and has smooth boundary. Thus we have at our disposal the fundamental solutions of $\Delta $ and $\Delta -1$ that are the Green's function $G(x,y)$ (with Dirichlet boundary conditions) on $\Omega$ and the Bessel potential $J_2(x,y)$ (with Dirichlet boundary conditions) on $\Omega$. For convenience, we choose the signs such that $G(x,y)\sim +c d(x,y)^{2-n}$, and similarly $J_2(x,y)\sim + c d(x,y)^{2-n}$, i.e.
\begin{gather}
 \Delta G(x,y)=-\delta_{x-y}\, , \qquad \Delta J_2(x,y)-J_2(x,y)=-\delta_{x=y}\, .
\end{gather}

 Recall that the Bessel potential can be defined for example with the Heat Kernel (see \cite[eq 4.2]{strichartz}) by:
\begin{gather}
 J_2(x,y)= \int_0^\infty dt \ e^{-t} H_t(x,y)\, .
\end{gather}
We recall the comparison with the Green's function:
\begin{gather}
 G(x,y) = \int_0^\infty dt H_t(x,y)\, .
\end{gather}
$J_{2}$ converges absolutely for almost all $x,y$ to a positive function, symmetric in $x,y$, and it is easy to see that, similarly to the Green's function, we have for all $x,y\in \Omega'$
\begin{gather}
 cd(x,y)^{2-n}\leq J_2(x,y)\leq C d(x,y)^{2-n}\, .
\end{gather}
Notice that the second inequality is valid also globally on $N$, see e.g. \cite[theorem 7.1]{LSW}.
\vspace{5mm}

\textbf{Measure estimates.} If $\operatorname{dim}(K)>k$, then the $k$-dimensional Hausdorff measure $\CH^k$ of $K$ is infinity. We want to show that there is a subset $S\subset K$ and a (potentially big) constant $C>1$ such that 
\begin{gather}\label{eq_PDMTDCB}
 \forall \, r ,\, x\, : \qquad \qquad \CH^k(\B r x \cap S) \leq Cr^k\, ,\\
\notag \exists x \ \ s.t. \ \ \limsup_{r\to 0} r^{-k}\CH^k(\B r x \cap S) \geq C^{-1}
\end{gather}

By the standard \cite[theorem 8.19]{mattilone} applied to $K$, there exists a compact subset $S_1\subset K$ with positive and finite $k$-dimensional Hausdorff measure. If we consider the upper density of the measure $\CH^k|_{S_1}$, i.e. the limit
\begin{gather}
 \Theta^{*,k}(x)=\limsup_{r\to 0} \frac{\CH^k(S_1\cap \B r x)}{\omega_k r^k}\, ,
\end{gather}
we have by \cite[theorem 6.2]{mattilone} that for $\CH^k$-almost all $x\in S_1$ 
\begin{gather}\label{eq_dens}
 2^{-k}\leq \Theta^{*,k}(x)\leq 1
\end{gather}
and $\Theta^{*,k}(x)=0$ for $\CH^k$-almost all $x\not\in S_1$.

Let $S_2\subseteq S_1$ be the subset where \eqref{eq_dens} holds, and let $r_u:S_2\to (0,\infty)$ be defined by
\begin{gather}
 r_u(x) = \sup \cur{r>0 \ \ s.t. \ \ \forall s\leq r: \ \ \CH^k(S\cap \B s x) \leq 7\omega_k s^k}\, .
\end{gather}
Since $r_u(x)>0$ for all $x\in S_2$, then the measurable subsets $\tilde S_i\subset S_2$ defined by
\begin{gather}
 \tilde S_i = \cur{x\in S_2 \ \ s.t. \ \ r_u(x)>i^{-1}}
\end{gather}
constitute a monotone sequence converging to $S_2$, and thus there exists some $\hat i$ such that 
\begin{gather}
 \CH^k(\tilde S_{\hat i})>\CH^k(S_1)/2>0\, .
\end{gather}
Now if we consider the subset $S=\tilde S_{\hat i}$, we have that for all $x\in \B {\hat i^{-1}}S$:
\begin{gather}
 \CH^k(\B r x \cap S) \leq \begin{cases}
                            7\omega_k (2r)^k & \text{if } \ r\leq \hat i^{-1}/2\, ,\\
                            \CH^k(S_1)<\infty& \text{if } \ r\geq \hat i^{-1}/2\, .
                           \end{cases}
\end{gather}
Thus we can conclude that there exists a constant $C$ (depending on $\hat i$) such that $\CH^k(\B r x \cap S) \leq Cr^k$ for all $x,r$. Moreover, since $\CH^k(S)>0$, by \cite[theorem 6.2]{mattilone}, $\Theta^{*,k}(S,x)>2^{-k}$ for $\CH^k$-almost all $x\in S$, and thus there exists a point $\bar x\in S$ such that 
\begin{gather}
 \limsup_{r\to 0} r^{-k}\CH^k(\B r {\bar x} \cap S) \geq 2^{-k}\, .
\end{gather}

Thus we have proved all the desired properties in \eqref{eq_PDMTDCB}. For convenience, from now on we will use the notation
\begin{gather}
 \mu_K(E):= \CH^k(S\cap E)\, 
\end{gather}
to indicate a measure satisfying \eqref{eq_PDMTDCB}.

\medskip

\textbf{Counterexamples to the \ref{Lp-Sub} and \ref{P} properties.} We will prove that the fundamental solutions of
 \begin{gather}\label{e:fundamental}
  \Delta u_1 = -\mu_K\, , \qquad \qquad -\Delta u_2 + u_2 = -\mu_K\, .
 \end{gather}
with Dirichlet boundary conditions satisfy $u_1,u_2\in L^p$, $u_1(x)$ is positive but not constant and $u_2(x)$ is not bounded from below.

\medskip

\textbf{Lower bounds.} First of all, we notice that if $x$ is a point of density for $\mu_K$, i.e. if $\limsup_{r\to 0} r^{-k} \mu(\B r x )\geq c>0$, then 
\begin{gather}
 \limsup_{y\to x} u_1(y)=\limsup_{y\to x} \int G(z,y)d\mu_K(z)\geq \limsup_{y\to x} \int_{\B r x} G(z,y)d\mu_K(z)\, .
\end{gather}
Given that there are infinitely many radia $r_i\to 0$ such that $\mu_K(\B {r_i}{x})\geq c r_i^{k}$, we have that if $d(x,y)\sim r_i$:
\begin{gather}
 u_1(x) \geq c r_i^{2-n} r_i^{k} \to +\infty\, ,
\end{gather}
where we used \eqref{eq_kPDCD1K}. In a similar way, we obtain that around a density point $x$ we have
\begin{gather}
 \limsup_{y\to x} u_2(x) =-\infty\, .
\end{gather}

The most complicated part of the proof are the $L^p$ estimates, but, up to trivial adaptation to the Riemannian setting, these are contained in \cite{adams1} (see also \cite[Lemma 10.12]{Po}). For the reader's convenience, we report a proof in the following lemma, which will conclude the proof of this proposition.
\end{proof}

 \begin{lemma}\label{lem:valto-adam}
  Let $0<s< n-2$ and $\mu$ be a positive measure with support in $\B 1 P$ for some $P$, and suppose that 
  \begin{gather}\label{e:morrey growth}
   \mu(\B r x)\leq C r^s\, , \qquad \forall x,r\, .
  \end{gather}
  Suppose also that $\operatorname{Vol}(\B r x)\leq C r^n$ for all $x\in \Omega'$ and $r\leq 2$. Then
  \begin{gather}
   \int G(x,y) d\mu(y)\in L^p\, , \qquad p\in \ton{\frac{n}{n-2},\frac{n-s}{n-2-s}}\, ,\\
   \int J_2(x,y)d\mu(y)\in L^p\, , \qquad p\in \ton{\frac{n}{n-2},\frac{n-s}{n-2-s}}\, .
  \end{gather}
 \end{lemma}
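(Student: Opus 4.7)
The plan is to reduce both integrability statements to a single bound on the model Riesz-type potential
\[
I_2\mu(x) := \int_N d(x,y)^{2-n}\, d\mu(y),
\]
and then derive this bound via Adams's trace inequality and a duality argument. Since both $G$ and $J_2$ are non-negative and satisfy $G(x,y), J_2(x,y) \leq C\,d(x,y)^{2-n}$ on the bounded region containing $\supp(\mu)$, the problem reduces to showing $I_2\mu \in L^p$ for $p \in (n/(n-2),(n-s)/(n-2-s))$. The assumed Euclidean-type volume bound $\vol(B_r(x)) \leq C r^n$ further allows one to work in a fixed relatively compact coordinate chart around $\supp(\mu)$, where the intrinsic distance and volume are bi-Lipschitz equivalent to their Euclidean counterparts; in particular $\mu$ still satisfies $\mu(B_r(x)) \leq C' r^s$ in the chart, and the problem becomes effectively one in $\mathbb{R}^n$.

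The key ingredient is Adams's trace inequality \cite{adams1} (cf.\ \cite[Lemma 10.12]{Po}) for Riesz potentials against Morrey measures: for every $p'$ with
\[
\frac{n-s}{2} < p' < \frac{n}{2}, \qquad q := \frac{sp'}{n-2p'},
\]
one has the bounded map
\[
\|I_2 g\|_{L^q(d\mu)} \leq C \|g\|_{L^{p'}(dx)},\qquad \forall\, g\in L^{p'}(dx).
\]
A short parameter check (using $0<s<n-2$) shows that $q > 1$ throughout this range, and since $\mu$ is a finite measure, H\"older's inequality then yields $\|I_2 g\|_{L^1(d\mu)} \leq \mu(N)^{1/q'}\|I_2 g\|_{L^q(d\mu)} \leq C\|g\|_{p'}$. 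Combining this with Fubini's theorem, for every $g\in L^{p'}$ with $\|g\|_{p'}\leq 1$,
\[
\Big|\int_N I_2\mu(x) g(x)\, dx\Big| = \Big|\int_N I_2 g(y)\, d\mu(y)\Big| \leq C.
\]
Taking the supremum over such $g$, duality gives $\|I_2\mu\|_{L^p(dx)} \leq C$ for $p = p'/(p'-1)$. As $p'$ varies over $((n-s)/2, n/2)$, the dual exponent $p$ sweeps out exactly the interval $(n/(n-2), (n-s)/(n-2-s))$, which is the target range.

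For completeness, Adams's trace inequality itself is obtained by combining Hedberg's pointwise interpolation
\[
I_2 g(x) \leq C (\mathbf{M}g(x))^{1-2p'/n} \|g\|_{p'}^{2p'/n},
\]
where $\mathbf{M}$ denotes the Hardy--Littlewood maximal operator, with a covering-type estimate for the super-level sets $\{\mathbf{M}g > t\}$. The crucial observation is that every ball $B_r(x) \subset \{\mathbf{M}g > t\}$ satisfies $r \leq C(\|g\|_{p'}/t)^{p'/n}$, a ball-wise consequence of H\"older's inequality and the definition of $\mathbf{M}$. A Vitali selection, the Morrey growth $\mu(B_r) \leq C r^s$, and a standard layer-cake/distribution-function integration then produce $\mu(\{\mathbf{M}g>t\}) \leq C(\|g\|_{p'}/t)^{sp'/n}$ and hence $\|\mathbf{M}g\|_{L^{sp'/n}(d\mu)} \leq C\|g\|_{p'}$, which feeds back through the Hedberg estimate to close the bound.

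The main obstacle is precisely this last measure-theoretic passage: the classical weak-$(p',p')$ maximal inequality gives Lebesgue-measure control of $\{\mathbf{M}g>t\}$, and one must exploit the sharper radius bound on the covering balls combined with the Morrey growth to transfer the estimate to the $\mu$-side. Once this ingredient is available, the Riemannian setting introduces no new difficulties, since all the computations are carried out in a single chart in which the intrinsic quantities are uniformly comparable to their Euclidean counterparts.
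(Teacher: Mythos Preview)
Your argument is correct, and it reaches the same conclusion through a route that is genuinely different from the paper's. Both proofs start by reducing to the Riesz-type potential $I_2\mu(x)=\int d(x,y)^{2-n}\,d\mu(y)$ via the pointwise bounds $G,J_2\leq Cd^{2-n}$. From there, however, the paper proceeds by a direct, self-contained computation: it expresses $I_2\mu(x)$ as a layer-cake integral $\int_0^\infty r^{1-n}\mu(B_r(x))\,dr$, applies Minkowski's integral inequality to bound $\|I_2\mu\|_{q'}$ by $\int_0^\infty r^{1-n}\|\mu(B_r(\cdot))\|_{q'}\,dr$, estimates $\|\mu(B_r(\cdot))\|_{q'}$ using both the growth condition and Fubini, splits the integral at an arbitrary $\kappa>0$, and then optimizes in $\kappa$. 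Your approach instead invokes Adams's trace inequality $I_2:L^{p'}(dx)\to L^q(d\mu)$ as a black box and dualizes via Fubini; the parameter check that $p'\in((n-s)/2,n/2)$ corresponds exactly to $p\in(n/(n-2),(n-s)/(n-2-s))$ is clean and correct. The paper's method is more elementary and keeps explicit track of the constants (it actually produces $\|I_2\mu\|_p\leq C\mu(M)^{1-(n-2q)/(sq)}$), which matters for the logarithmic endpoint variant in Remark~\ref{rmk:log growth}. Your method is shorter once Adams is granted, and makes the role of duality transparent.

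One caution on your appended sketch of Adams's inequality: the passage from the weak-type distributional estimate $\mu(\{\mathbf{M}g>t\})\leq C(\|g\|_{p'}/t)^{sp'/n}$ to the strong-type bound $\|\mathbf{M}g\|_{L^{sp'/n}(d\mu)}\leq C\|g\|_{p'}$ is not automatic when $sp'/n\leq 1$, which occurs for small $s$. Adams's actual argument does not route through Hedberg in this way; it handles the Riesz potential directly with a splitting and the fractional maximal function. Since you are invoking the cited result \cite{adams1} rather than reproving it, this does not affect the validity of your main argument, but the sketch as written would not stand on its own.
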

\begin{proof}
 We consider the Green's function case, since the Bessel potential estimates are completely analogous. We have that 
 \begin{gather}
 \notag \int G(x,y)d\mu(y) \leq C \int d(x,y)^{2-n} d\mu(y)=C \int_0^\infty dt \ \mu \cur{ d^{2-n}>r } = C \int_0^\infty dr \ \mu \cur{ d<r^{\frac 1 {2-n}} }=\\
  = C (n-2)\int_0^\infty d\hat r  \ \hat r^{1-n}\ \mu \cur{ d<\hat r  } = C\int_0^\infty \frac {dr}{r} \frac 1 {r^{n-2}} \mu(\B r x)\, .
 \end{gather}
Let $1<q<\infty$ and $q'$ be its conjugate exponent. By (the continuous version of) Minkowski inequality
 \begin{gather}
  \norm{\int G(x,y)d\mu(y)}_{q'} \leq C\int_0^\infty \frac {dr}{r^{n-1}} \norm{\mu(\B r x)}_{L^{q'}(x)}\, .
 \end{gather}
We estimate very simply
\begin{gather}
 \ton{\mu\ton{\B r x} }^{\frac{q}{q-1}}= \ton{\mu\ton{\B r x} }^{\frac{1}{q-1}} \mu\ton{\B r x} \leq \min \ton{C r^s, \ \mu(M)}^{\frac 1 {q-1}}\mu\ton{\B r x}\, .
\end{gather}
Notice also that 
\begin{gather}
 \int_{M} dx \ \mu(\B r x) = \int_M dx \ \int_{\B r x} d\mu(y) = \int_M d\mu(y) \int_{\B r y} dy \leq C r^n \mu(M)\, .
\end{gather}
Putting these two together we get
\begin{gather}
 \norm{\mu}_{q'}^{q'}= \int_M dx \ \mu\ton{\B r x}^{\frac q{q-1}} \leq \min \ton{C r^s, \ \mu(M)}^{\frac 1 {q-1}}\int_M \mu\ton{\B r x}\leq \\
 \notag \leq C \min \ton{C\mu(M) r^{\frac s{q-1} +n}, \ \mu(M)^{q'} r^n}\, ,\\
 \notag \norm{\mu}_{q'}\leq C \min \ton{C\mu(M)^{\frac {q-1}{q}} r^{\frac s{q} +\frac{n(q-1)}{q}}, \ \mu(M) r^\frac{n(q-1)}{q}}
\end{gather}
Thus, if 
\begin{gather}\label{eq_cond_ns}
 2q-n<0\, , \qquad s+2q-n>0\, ,
\end{gather}
we get $\forall \kappa>0$:
\begin{gather}\label{e:k}
 \norm{\int G(x,y)d\mu(y)}_{q'} \leq C\mu(M)^{\frac{q-1}{q}} \int_0^\kappa r^{\frac {s-n}{q}+1} + C\mu(M) \int_\kappa^\infty  r^{1-\frac n q} =\\
\notag = C\mu(M)^{\frac{q-1}{q}} \int_0^\kappa r^{\frac {s+q-n}{q}} + C\mu(M) \int_\kappa^\infty  r^{\frac {q-n} q} = C \mu(M)^{\frac{q-1}{q}} \kappa^{\frac {s+2q-n}{q}} + C \mu(M) \kappa^{\frac {2q-n} q}
\end{gather}
By choosing the ``best'' k, the one which minimizes this last expression, which is $k=c \mu(M) ^{\frac 1 s}$, we get
\begin{gather}
 \norm{\int G(x,y)d\mu(y)}_{q'} \leq C \mu(M)^{\frac{q-1}{q}} \kappa^{\frac {s+2q-n}{q}} + C \mu(M) \kappa^{\frac {2q-n} q} =C \mu(M)^{1-\frac{n-2q}{sq}}\, .
\end{gather}

Choosing $q'=p$, we get the result. Notice that since $q$ must satisfy \eqref{eq_cond_ns}, i.e., $q\in \ton{\frac {n-s}{2},\frac n 2}$ we have that $p\in \ton{\frac{n}{n-2}, \frac{n-s}{n-s-2}}$.

Notice also that the exponent $1-\frac{n-2q}{sq}>0$ if $q>\frac{n-s}{2}$.
\end{proof}

\begin{remark}\label{rmk:log growth}
Note that Lemma \ref{lem:valto-adam} remains true also for $p=\frac{n-s}{n-2-s}$ if one assume instead of $\eqref{e:morrey growth}$ the strongest condition
  \begin{gather}
	\mu(\B r x)\leq C r^s(1+\abs{\log(r)})^{-2}\, , \qquad \forall x,r, \ \ 0\le s< n-2\, .
\end{gather}
 Indeed, under this assumption one can choose $\kappa=\infty$ in \eqref{e:k}.
	\end{remark}

\section{\texorpdfstring{Essential self-adjointness and $L^{p}$}{Lp}-operator cores for Schr\"odinger operators}\label{subsec:core}

The $L^{p}$-PP property first appeared, in a somewhat implicit form, in the seminal paper \cite{Ka} where T. Kato addressed the problem of the self-adjointness of Sch\"odinger operators with singular potentials in Euclidean spaces. Its validity on Riemannian manifolds was later systematically investigated with the aim of extending Kato's results to covariant Sch\"odinger operators with singular potentials on vector bundles; see e.g. \cite{BMS, Gu-JGEA} and references therein. More generally, a basic problem in the $L^{p}$ spectral theory of Schr\"odinger opeators is to understand under which conditions on the potential and on the underlying manifold, the space of smooth compactly supported functions is an $L^{p}$ core of the operator, namely, it is dense in the domain of its maximal realization. Again, in Euclidean space this is a classical result of Kato (see \cite{Ka2}), in the Riemannian case (under bounded geometry hypothesis) see \cite{Mi}, and in the
case of geodesically complete manifolds with lower bounded Ricci curvature see \cite[Appendix A] {GP}.

When considered in this framework, the results in our paper can be exploited to prove $L^{p}$ spectral properties for a class of Schr\"odinger operators on possibly incomplete manifolds. As we shall see in a moment, what we get looks relevant even for the Laplace-Beltrami operator.\smallskip

Assuming that $V \in L^{p}_{loc}(M)$ and $V \geq 0$, we define $(\Delta - V)_{\min,p}$ as the closure of $(\Delta-V)|_{C^{\infty}_{c}(M)}$ in $L^{p}(M)$. Furthermore, $(\Delta - V)_{\max,p}$ is defined as
\[
(\Delta - V)_{\max,p} u := (\Delta -V)u,\quad  \forall u \in \{w \in L^{p}(M) :  Vw\in L^{1}_{loc}(M) \text{ and }(\Delta -V)w \in L^{p}(M)\}.
\]

\begin{proposition}\label{coro:batu}
	Let $M$ satisfy the assumptions of either Proposition \ref{prop:LpPP cap} or Proposition \ref{prop:LpPPMink}. Assume that $p \in (1,+\infty)$, $V\geq 0$ and $V \in L^{p}_{loc}(M)$.
\begin{itemize}
\item [(a)] Then the operator $(\Delta - V)_{\min,p}$ generates a contraction semigroup in $L^{p}(M)$.
\item [(b)] Furthermore, $(\Delta - V)_{\min,p} = (\Delta - V)_{\max,p}$ or, in other words, $C^{\infty}_{c}(M)$ is an operator core for $(\Delta - V)_{\max,p}$.
\item [(c)] In particular, if $p=2$, then the operator $\Delta -V$ is essentially self-adjoint on $C^{\infty}_{c}(M)$.
\end{itemize}
\end{proposition}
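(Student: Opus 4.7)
The approach combines the $L^p$-Positivity Preservation on $M$, granted by the hypotheses through Proposition~\ref{prop:LpPP cap} or Proposition~\ref{prop:LpPPMink}, with classical functional-analytic arguments \`a la Kato. The plan is to establish (i) dissipativity of the minimal realization, (ii) uniqueness for the maximal realization by invoking the Brezis--Kato inequality and \eqref{P}, and (iii) surjectivity of $1 - (\Delta - V)_{\min,p}$ onto $L^p(M)$. These three ingredients then combine via the Lumer--Phillips theorem to yield (a), and together with the identification of the maximal and minimal domains give (b) and (c).

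\emph{Dissipativity and uniqueness.} For $u \in C^\infty_c(M)$ real-valued, integration by parts produces
\begin{equation*}
\int_M (1 - \Delta + V) u \cdot |u|^{p-2} u \, \dv = \|u\|_p^p + (p-1)\int_M |u|^{p-2}|\nabla u|^2 \, \dv + \int_M V|u|^p \, \dv \geq \|u\|_p^p,
\end{equation*}
and H\"older's inequality then yields $\|(1 - (\Delta - V))u\|_p \geq \|u\|_p$, a bound that extends by closure to the whole domain of $(\Delta - V)_{\min,p}$. For uniqueness in the maximal domain, suppose $u \in L^p(M)$ with $Vu \in L^1_{loc}(M)$ satisfies $(1 - \Delta + V)u = 0$ distributionally; then $\Delta u = (1+V)u \in L^1_{loc}(M)$. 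The Brezis--Kato inequality (Proposition~\ref{prop-BK}) applied with $f = (1+V)u$ produces
\begin{equation*}
\Delta u_+ \geq 1_{\{u > 0\}}(1+V)u = (1+V)u_+ \geq u_+ \quad \text{on } M,
\end{equation*}
that is, $(-\Delta + 1)(-u_+) \geq 0$ distributionally on $M$, with $-u_+ \in L^p(M)$. Invoking \eqref{P} gives $-u_+ \geq 0$, whence $u_+ \equiv 0$; replacing $u$ with $-u$ yields $u_- \equiv 0$, so $u \equiv 0$.

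\emph{Surjectivity.} The delicate step is showing that for every $f \in L^p(M)$ there exists $w$ in the domain of $(\Delta - V)_{\min,p}$ with $(1 - \Delta + V)w = f$. I would argue via exhaustion: fix relatively compact smooth subdomains $M_k \Subset M_{k+1}$ exhausting $M$; solve the Dirichlet problem $(1 - \Delta + V)w_k = f$ on $M_k$ (for instance by approximating $V$ with its bounded truncations $V \wedge m$ and passing to the monotone variational limit, using the dissipativity bound $\|w_k\|_p \leq \|f\|_p$); then choose cutoffs $\chi_k \in C^\infty_c(M_{k+1})$ with $\chi_k \equiv 1$ on $M_k$ and set $v_k := \chi_k w_{k+1}$, which is compactly supported and locally $W^{2,p}$, hence in the domain of $(\Delta - V)_{\min,p}$ via standard convolution approximation. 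Closedness of the minimal operator permits passing to the limit. This step is the main technical obstacle: the commutator terms $[-\Delta, \chi_k]w_{k+1} = -2\nabla\chi_k\cdot\nabla w_{k+1} - w_{k+1}\Delta\chi_k$ must be controlled in $L^p$-norm, which is cleanest after reducing to non-negative data (so that the maximum principle makes $w_k$ monotone increasing) and choosing cutoffs adapted to the decay of $w_k$.

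\emph{Conclusions.} Dissipativity and surjectivity imply, through Lumer--Phillips, that $(\Delta - V)_{\min,p}$ generates a contraction semigroup on $L^p(M)$, proving (a). For (b): given $u$ in the domain of $(\Delta - V)_{\max,p}$, set $f := (1 - \Delta + V)u \in L^p(M)$; by surjectivity there is $w$ in the domain of $(\Delta - V)_{\min,p}$ with $(1 - \Delta + V)w = f$, so $u - w$ belongs to the maximal domain and distributionally satisfies $(1 - \Delta + V)(u-w) = 0$; the uniqueness established above forces $u = w$, hence the two domains coincide. For (c), when $p=2$ the operator $\Delta - V$ on $C^\infty_c(M)$ is symmetric, its closure is $(\Delta - V)_{\min,2}$, and its $L^2$-adjoint is $(\Delta - V)_{\max,2}$, which agree by (b); the closure is therefore self-adjoint and $\Delta - V$ is essentially self-adjoint on $C^\infty_c(M)$.
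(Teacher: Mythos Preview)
The paper's route is different in character: instead of a self-contained Lumer--Phillips argument, it obtains (a) and (c) by invoking \cite[Proposition~2.9]{Gu-JGEA} once the $L^p$-PP property is available, and for (b) it first deduces $\Delta_{\min,p}=\Delta_{\max,p}$ from the argument of \cite[Theorem~5]{GP} (using that $\Delta_{\min,q}$, $q=p/(p-1)$, generates a contraction semigroup) and then appeals to \cite[Sections~2--3]{Mi}. Your dissipativity computation and your uniqueness step are both correct, and the latter---Brezis--Kato combined with \eqref{P}---is exactly the mechanism through which positivity preservation enters, matching the spirit of those references.

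The genuine gap is surjectivity. The exhaustion sketch does not close as written: to place $v_k=\chi_k w_{k+1}$ in $\operatorname{dom}(\Delta-V)_{\min,p}$ you need $C^\infty_c$-approximants converging in graph norm, but with $V$ only in $L^p_{loc}$ the passage $Vv_k^\varepsilon\to Vv_k$ in $L^p$ does not follow from $W^{2,p}$ convergence of the mollified $v_k^\varepsilon$; and the commutator $[\Delta,\chi_k]w_{k+1}$ requires $L^p$ gradient control on $w_{k+1}$ that you have not supplied. The standard alternative is duality: dissipativity makes the range of $1-(\Delta-V)_{\min,p}$ closed, so surjectivity amounts to $\ker\bigl(1-(\Delta-V)_{\min,p}\bigr)^*=\{0\}$, i.e.\ to killing distributional $L^{p'}$ solutions of $(1-\Delta+V)v=0$---but then your own Brezis--Kato argument, run in $L^{p'}$, requires the $L^{p'}$-PP property, which the stated hypotheses (formulated for the specific exponent $p$) do not provide. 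This $p$ versus $p'$ asymmetry is precisely what the references cited by the paper negotiate, and is why the paper defers to them rather than unpacking the surjectivity step in place.
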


\begin{proof}
 (c) Follows from \cite[Proposition 2.9 (a)]{Gu-JGEA} together with the $L^{2}$-PP property of Proposition   \ref{prop:LpPP cap} or Proposition \ref{prop:LpPPMink} above.
 
 (a) Follows from  \cite[Proposition 2.9 (b)]{Gu-JGEA} together with the $L^{p}$-PP property of Proposition   \ref{prop:LpPP cap} or Proposition \ref{prop:LpPPMink} above.
 
 (b) Note that (a) with $V \equiv 0$ tells us that $\Delta_{\min,q}$ generates a contraction semigroup in $L^{q}(M)$, where $q$ is the H\"older conjugate of $p$. Now, we can repeat the argument in the proof of \cite[Theorem 5]  {GP} to show that $\Delta_{\min,p}=\Delta_{\max,p}$. Having established (a) and having $\Delta_{\min,p}=\Delta_{\max,p}$ at our disposal, it remains to remember the $L^{p}$-PP property of Proposition   \ref{prop:LpPP cap} or Proposition \ref{prop:LpPPMink}  and observe that the argument of Sections 2 and 3 of \cite{Mi} are applicable in our context. This proves property (b).
\end{proof}

	A quick comparison with the current literature permits to put our result in perspective and highlight its novelties.\smallskip
	
	In \cite{CdV}, the essential self-adjointness of the Laplace-Beltrami operator is proved on a punctured manifold and in \cite{Ma} the study is extended to incomplete Riemannian manifolds of the form $M= N \setminus K$ where the removed set $K$ is a smooth closed submanifold $K$ of co-dimension greater than $3$. In the very recent \cite{HMS-preprint} (see also \cite{HMS-journal}) the case of singular removed sets $K$ (in the general setting of metric measure spaces) is considered. The authors show that for a closed (possibly non-compact)  set $K$ with  $\dim_{H}(K)<n-2p$,  the space  $C^{\infty}_{c}(M)$ is an $L^{p}$-operator core of the Laplacian on $M$ (and, hence, when $p=2$ the Laplacian is essentially self-adjoint). It is also shown that this property fails if the singular set $K$ has zero-Riemannian measure and Hausdorff dimension $\dim_{H}(K)>n-2p$. The treshold case $n-2p$ is included in the picture by using a suitably log-correction in the Hausdorff measure. Thus, in a rather precise sense, our result completes those in \cite{HMS-preprint}. Indeed, for a fixed dimension, in that paper the $L^{p}$-operator core property for the Laplacian is established for small values of $p$ whereas we  are able to deal with large values $p \geq 2$. It should be also pointed out that combining \cite{HMS-preprint} with our Proposition \ref{coro:batu}  yields that, as somehow expected, $L^{p}$-positivity preservation and  $L^{p}$-operator core properties for the Laplacian are distinct concepts.

In a different but related direction, in \cite{Ma, GM} the authors investigate spectral properties of the Gaffney Laplacian $\Delta_G : W^{1,2}(M) \to L^{2}(M)$ on incomplete manifolds, obtaining conditions for its self-adjointness in terms of probabilistic properties like parabolicity and stochastic completeness and related to the Minkowski content of the removed part, somehow in the spirit of our Proposition \ref{prop:LpPPMink}.

	In the context of Schr\"odinger operators, under our assumptions on $V$ and for $p \not=2$, Proposition \ref{coro:batu} is new even in the case when $M$ is geodesically complete. Indeed, note that properties (a) and (b) for a geodesically complete manifold were also obtained in \cite{Mi2019}, but under the more stringent hypothesis $0\leq V \in L^{\infty}_{loc}(M)$. When $M$ is geodesically complete, Proposition \ref{coro:batu} (iii) is contained in (various) known self-adjointness results; see for instance the main result of \cite{GuPo}, where the authors allow $V$ to have a negative part belonging to the Kato class on $M$.

   Schr\"odinger operators on geodesically incomplete manifolds were also considered in  \cite{MiTu}  where the authors
require no assumptions on the geometry of $M$, but ask for a controlled behaviour of the potential $V$ near the Cauchy boundary of $M$.

\section{Acknowledgments} We are indebted to Andrea Bonfiglioli and Ermanno Lanconelli for clarifying some points about 
their article \cite{BL} and to Peter Sj\"ogren for sharing with us some useful comments about the Brelot-Hervé theory of subharmonic functions and the smoothing approximation procedure. We also thanks Jun Masamune for the relevant references \cite{HMS-journal, HMS-preprint} and to Ognien Milatovic for a  careful reading of the first draft of the paper and for several suggestions that improved both the presentation and the result of Section \ref{subsec:core}. Finally, we are very grateful to Batu G\"uneysu for pointing out to us the BMS conjecture, and for several fruitful discussions on the topic during the last years.

Partially supported by INdAM-GNAMPA.

\bibliographystyle{aomalpha}
\bibliography{PVV_LpPP}

\providecommand{\bysame}{\leavevmode\hbox to3em{\hrulefill}\thinspace}
\providecommand{\noopsort}[1]{}
\providecommand{\mr}[1]{\href{http://www.ams.org/mathscinet-getitem?mr=#1}{MR~#1}}
\providecommand{\zbl}[1]{\href{http://www.zentralblatt-math.org/zmath/en/search/?q=an:#1}{Zbl~#1}}
\providecommand{\jfm}[1]{\href{http://www.emis.de/cgi-bin/JFM-item?#1}{JFM~#1}}
\providecommand{\arxiv}[1]{\href{http://www.arxiv.org/abs/#1}{arXiv~#1}}
\providecommand{\doi}[1]{\url{https://doi.org/#1}}
\providecommand{\MR}{\relax\ifhmode\unskip\space\fi MR }
% \MRhref is called by the amsart/book/proc definition of \MR.
\providecommand{\MRhref}[2]{%
  \href{http://www.ams.org/mathscinet-getitem?mr=#1}{#2}
}
\providecommand{\href}[2]{#2}
\begin{thebibliography}{HMW21}

\bibitem[Ada71]{adams1}
\bgroup\scshape{}D.~R. Adams\egroup{}, Traces of potentials arising from
  translation invariant operators,  \emph{Ann. Scuola Norm. Sup. Pisa Cl. Sci.
  (3)} \textbf{25} (1971), 203--217. \mr{287301}.

\bibitem[Anc09]{An}
\bgroup\scshape{}A.~Ancona\egroup{}, Elliptic operators, conormal derivatives
  and positive parts of functions,  \emph{J. Funct. Anal.} \textbf{257} no.~7
  (2009), 2124--2158, With an appendix by Ha\"{\i}m Brezis.
  \doi{10.1016/j.jfa.2008.12.019}.

\bibitem[AMP21]{AMP}
\bgroup\scshape{}D.~a.~J. Ara\'{u}jo\egroup{},
  \bgroup\scshape{}L.~Mari\egroup{}, and \bgroup\scshape{}L.~F.
  Pessoa\egroup{}, Detecting the completeness of a {F}insler manifold via
  potential theory for its infinity {L}aplacian,  \emph{J. Differential
  Equations} \textbf{281} (2021), 550--587. \mr{4216961}.
  \doi{10.1016/j.jde.2021.02.005}.

\bibitem[Aub82]{AUGREEN}
\bgroup\scshape{}T.~Aubin\egroup{}, \emph{Nonlinear analysis on manifolds.
  {M}onge-{A}mp\`ere equations}, \emph{Grundlehren der mathematischen
  Wissenschaften [Fundamental Principles of Mathematical Sciences]}
  \textbf{252}, Springer-Verlag, New York, 1982. \mr{681859}.
  \doi{10.1007/978-1-4612-5734-9}.

\bibitem[Bar07]{Ba}
\bgroup\scshape{}K.~Bara\'{n}ski\egroup{}, Hausdorff dimension of the limit
  sets of some planar geometric constructions,  \emph{Adv. Math.} \textbf{210}
  no.~1 (2007), 215--245. \mr{2298824}.  \doi{10.1016/j.aim.2006.06.005}.

\bibitem[Bed84]{Be}
\bgroup\scshape{}T.~Bedford\egroup{}, \emph{Crinkly curves, Markov partitions
  and dimension}, Ph.D. thesis, Univ. Warwick, Coventry, 1984.

\bibitem[BS18]{BS}
\bgroup\scshape{}D.~Bianchi\egroup{} and \bgroup\scshape{}A.~G. Setti\egroup{},
  Laplacian cut-offs, porous and fast diffusion on manifolds and other
  applications,  \emph{Calc. Var. Partial Differential Equations} \textbf{57}
  no.~1 (2018), Art. 4, 33. \doi{10.1007/s00526-017-1267-9}.

\bibitem[BM22]{BM}
\bgroup\scshape{}A.~Bisterzo\egroup{} and \bgroup\scshape{}L.~Marini\egroup{},
  The ${L}^\infty$-positivity preserving property and stochastic completeness,
  \emph{Potential Anal.} (2022).
  \doi{https://doi.org/10.1007/s11118-022-10041-w}.

\bibitem[BL13]{BL}
\bgroup\scshape{}A.~Bonfiglioli\egroup{} and
  \bgroup\scshape{}E.~Lanconelli\egroup{}, Subharmonic functions in
  sub-{R}iemannian settings,  \emph{J. Eur. Math. Soc. (JEMS)} \textbf{15}
  no.~2 (2013), 387--441. \mr{3017042}.  \doi{10.4171/JEMS/364}.

\bibitem[BMS02]{BMS}
\bgroup\scshape{}M.~Braverman\egroup{},
  \bgroup\scshape{}O.~Milatovich\egroup{}, and
  \bgroup\scshape{}M.~Shubin\egroup{}, Essential selfadjointness of
  {S}chr\"{o}dinger-type operators on manifolds,  \emph{Uspekhi Mat. Nauk}
  \textbf{57} no.~4(346) (2002), 3--58. \doi{10.1070/RM2002v057n04ABEH000532}.

\bibitem[Bre84]{Br}
\bgroup\scshape{}H.~Brezis\egroup{}, Semilinear equations in {${\bf R}^N$}
  without condition at infinity,  \emph{Appl. Math. Optim.} \textbf{12} no.~3
  (1984), 271--282. \mr{768633}.  \doi{10.1007/BF01449045}.

\bibitem[DL08]{dl_book_tangent}
\bgroup\scshape{}C.~De~Lellis\egroup{}, \emph{Rectifiable sets, densities and
  tangent measures}, \emph{Zurich Lectures in Advanced Mathematics}, European
  Mathematical Society (EMS), Z\"{u}rich, 2008. \mr{2388959}.
  \doi{10.4171/044}.

\bibitem[Fug92]{Fu}
\bgroup\scshape{}B.~Fuglede\egroup{}, Some properties of the {R}iesz charge
  associated with a {$\partial$}-subharmonic function,  \emph{Potential Anal.}
  \textbf{1} no.~4 (1992), 355--371. \mr{1245891}.  \doi{10.1007/BF00301788}.

\bibitem[Gri99]{grig}
\bgroup\scshape{}A.~Grigor'yan\egroup{}, Analytic and geometric background of
  recurrence and non-explosion of the {B}rownian motion on {R}iemannian
  manifolds,  \emph{Bull. Amer. Math. Soc. (N.S.)} \textbf{36} no.~2 (1999),
  135--249. \mr{1659871}.  \doi{10.1090/S0273-0979-99-00776-4}.

\bibitem[GM13]{GM}
\bgroup\scshape{}A.~Grigor'yan\egroup{} and
  \bgroup\scshape{}J.~Masamune\egroup{}, Parabolicity and stochastic
  completeness of manifolds in terms of the {G}reen formula,  \emph{J. Math.
  Pures Appl. (9)} \textbf{100} no.~5 (2013), 607--632. \mr{3115827}.
  \doi{10.1016/j.matpur.2013.01.015}.

\bibitem[G{\"u}n16]{Gu-JGEA}
\bgroup\scshape{}B.~G{\"u}neysu\egroup{}, Sequences of {L}aplacian cut-off
  functions,  \emph{J. Geom. Anal.} \textbf{26} no.~1 (2016), 171--184.
  \doi{10.1007/s12220-014-9543-9}.

\bibitem[G{\"u}n17]{Gu-survey}
\bgroup\scshape{}B.~G{\"u}neysu\egroup{}, The {BMS} conjecture,  in \emph{Ulmer
  Seminare}, \textbf{20}, 2017, Preprint available at
  {https://arxiv.org/pdf/1709.07463.pdf}, pp.~97--101.

\bibitem[G17]{Gu-book}
\bgroup\scshape{}B.~G\"{u}neysu\egroup{}, \emph{Covariant {S}chr\"{o}dinger
  semigroups on {R}iemannian manifolds}, \emph{Operator Theory: Advances and
  Applications} \textbf{264}, Birkh\"{a}user/Springer, Cham, 2017.
  \doi{10.1007/978-3-319-68903-6}.

\bibitem[GP19]{GP}
\bgroup\scshape{}B.~G\"{u}neysu\egroup{} and
  \bgroup\scshape{}S.~Pigola\egroup{}, {$L^p$}-interpolation inequalities and
  global {S}obolev regularity results,  \emph{Ann. Mat. Pura Appl. (4)}
  \textbf{198} no.~1 (2019), 83--96, With an appendix by Ognjen Milatovic.
  \mr{3918620}.  \doi{10.1007/s10231-018-0763-7}.

\bibitem[GPSV]{GPSV-preprint}
\bgroup\scshape{}B.~G\"uneysu\egroup{}, \bgroup\scshape{}S.~Pigola\egroup{},
  \bgroup\scshape{}P.~Stollmann\egroup{}, and
  \bgroup\scshape{}G.~Veronelli\egroup{}, A new notion of subharmonicity on
  locally smoothing spaces, and a conjecture by {B}raverman, {M}ilatovic,
  {S}hubin,  \emph{Preprint 2022}.

\bibitem[GP13]{GuPo}
\bgroup\scshape{}B.~G\"{u}neysu\egroup{} and \bgroup\scshape{}O.~Post\egroup{},
  Path integrals and the essential self-adjointness of differential operators
  on noncompact manifolds,  \emph{Math. Z.} \textbf{275} no.~1-2 (2013),
  331--348. \mr{3101810}.  \doi{10.1007/s00209-012-1137-2}.

\bibitem[HKM93]{HKM}
\bgroup\scshape{}J.~Heinonen\egroup{},
  \bgroup\scshape{}T.~Kilpel\"{a}inen\egroup{}, and
  \bgroup\scshape{}O.~Martio\egroup{}, \emph{Nonlinear potential theory of
  degenerate elliptic equations}, \emph{Oxford Mathematical Monographs}, The
  Clarendon Press, Oxford University Press, New York, 1993, Oxford Science
  Publications. \mr{1207810}.

\bibitem[Her62]{He}
\bgroup\scshape{}R.-M. Herv\'{e}\egroup{}, Recherches axiomatiques sur la
  th\'{e}orie des fonctions surharmoniques et du potentiel,  \emph{Ann. Inst.
  Fourier (Grenoble)} \textbf{12} (1962), 415--571. \mr{139756}.  Available at
  \url{http://www.numdam.org.unimib.idm.oclc.org/item?id=AIF_1962__12__415_0}.

\bibitem[HKM17]{HMS-journal}
\bgroup\scshape{}M.~Hinz\egroup{}, \bgroup\scshape{}S.~Kang\egroup{}, and
  \bgroup\scshape{}J.~Masamune\egroup{}, Probabilistic characterizations of
  essential self-adjointness and removability of singularities,  \emph{Mat.
  Fiz. Kompyut. Model.} no.~3(40) (2017), 148--162. \mr{3706135}.
  \doi{10.15688/mpcm.jvolsu.2017.3.11}.

\bibitem[HMS22]{HMS-preprint}
\bgroup\scshape{}M.~Hinz\egroup{}, \bgroup\scshape{}J.~Masamune\egroup{}, and
  \bgroup\scshape{}K.~Suzuki\egroup{}, Removable sets and ${L}^p$-uniqueness on
  manifolds and metric measure spaces, 2022. \doi{10.48550/ARXIV.2204.01378}.

\bibitem[HMW21]{HMW}
\bgroup\scshape{}B.~Hua\egroup{}, \bgroup\scshape{}J.~Masamune\egroup{}, and
  \bgroup\scshape{}R.~K. Wojciechowski\egroup{}, Essential self-adjointness and
  the {$L^2$}-{L}iouville property,  \emph{J. Fourier Anal. Appl.} \textbf{27}
  no.~2 (2021), Paper No. 26, 27. \mr{4231682}.
  \doi{10.1007/s00041-021-09833-2}.

\bibitem[Kat72]{Ka}
\bgroup\scshape{}T.~Kato\egroup{}, Schr\"{o}dinger operators with singular
  potentials,  \emph{Israel J. Math.} \textbf{13} (1972), 135--148 (1973).
  \doi{10.1007/BF02760233}.

\bibitem[Kat86]{Ka2}
\bgroup\scshape{}T.~Kato\egroup{}, {$L^p$}-theory of {S}chr\"{o}dinger
  operators with a singular potential,  in \emph{Aspects of positivity in
  functional analysis ({T}\"{u}bingen, 1985)}, \emph{North-Holland Math. Stud.}
  \textbf{122}, North-Holland, Amsterdam, 1986, pp.~63--78. \mr{859719}.

\bibitem[LS84]{LS}
\bgroup\scshape{}P.~Li\egroup{} and \bgroup\scshape{}R.~Schoen\egroup{},
  {$L^p$} and mean value properties of subharmonic functions on {R}iemannian
  manifolds,  \emph{Acta Math.} \textbf{153} no.~3-4 (1984), 279--301.
  \doi{10.1007/BF02392380}.

\bibitem[LSW63]{LSW}
\bgroup\scshape{}W.~Littman\egroup{}, \bgroup\scshape{}G.~Stampacchia\egroup{},
  and \bgroup\scshape{}H.~F. Weinberger\egroup{}, Regular points for elliptic
  equations with discontinuous coefficients,  \emph{Ann. Scuola Norm. Sup. Pisa
  Cl. Sci. (3)} \textbf{17} (1963), 43--77.

\bibitem[MRS22]{MRS}
\bgroup\scshape{}L.~Mari\egroup{}, \bgroup\scshape{}M.~Rigoli\egroup{}, and
  \bgroup\scshape{}A.~G. Setti\egroup{}, On the {$1/H$}-flow by {$p$}-{L}aplace
  approximation: new estimates via fake distances under {R}icci lower bounds,
  \emph{Amer. J. Math.} \textbf{144} no.~3 (2022), 779--849. \mr{4436145}.
  \doi{10.1353/ajm.2022.0016}.

\bibitem[MV21]{MV}
\bgroup\scshape{}L.~Marini\egroup{} and \bgroup\scshape{}G.~Veronelli\egroup{},
  \emph{Some functional properties on Cartan-Hadamard manifolds of very
  negative curvature}, Preprint available at
  https://arxiv.org/pdf/2105.09024.pdf, 2021.

\bibitem[Mas99]{Ma}
\bgroup\scshape{}J.~Masamune\egroup{}, Essential self-adjointness of
  {L}aplacians on {R}iemannian manifolds with fractal boundary,  \emph{Comm.
  Partial Differential Equations} \textbf{24} no.~3-4 (1999), 749--757.
  \mr{1683058}.  \doi{10.1080/03605309908821442}.

\bibitem[Mat95]{mattilone}
\bgroup\scshape{}P.~Mattila\egroup{}, \emph{Geometry of sets and measures in
  {E}uclidean spaces}, \emph{Cambridge Studies in Advanced Mathematics}
  \textbf{44}, Cambridge University Press, Cambridge, 1995, Fractals and
  rectifiability. \mr{1333890}.  \doi{10.1017/CBO9780511623813}.

\bibitem[McM84]{McM}
\bgroup\scshape{}C.~McMullen\egroup{}, The {H}ausdorff dimension of general
  {S}ierpi\'{n}ski carpets,  \emph{Nagoya Math. J.} \textbf{96} (1984), 1--9.
  \mr{771063}.  \doi{10.1017/S0027763000021085}.

\bibitem[Mil06]{Mi}
\bgroup\scshape{}O.~Milatovic\egroup{}, On {$m$}-accretive {S}chr\"{o}dinger
  operators in {$L^p$}-spaces on manifolds of bounded geometry,  \emph{J. Math.
  Anal. Appl.} \textbf{324} no.~2 (2006), 762--772. \mr{2265078}.
  \doi{10.1016/j.jmaa.2005.12.061}.

\bibitem[Mil19]{Mi2019}
\bgroup\scshape{}O.~Milatovic\egroup{}, The {$m$}-accretivity of covariant
  {S}chr\"{o}dinger operators with unbounded drift,  \emph{Ann. Global Anal.
  Geom.} \textbf{55} no.~4 (2019), 657--679. \mr{3951751}.
  \doi{10.1007/s10455-018-09645-6}.

\bibitem[MT16]{MiTu}
\bgroup\scshape{}O.~Milatovic\egroup{} and \bgroup\scshape{}F.~Truc\egroup{},
  Self-adjoint extensions of differential operators on {R}iemannian manifolds,
  \emph{Ann. Global Anal. Geom.} \textbf{49} no.~1 (2016), 87--103.
  \mr{3454026}.  \doi{10.1007/s10455-015-9482-0}.

\bibitem[Ni07]{Ni}
\bgroup\scshape{}L.~Ni\egroup{}, Mean value theorems on manifolds,  \emph{Asian
  J. Math.} \textbf{11} no.~2 (2007), 277--304. \mr{2328895}.
  \doi{10.4310/AJM.2007.v11.n2.a6}.

\bibitem[PS14]{PS-Ensaios}
\bgroup\scshape{}S.~Pigola\egroup{} and \bgroup\scshape{}A.~G. Setti\egroup{},
  \emph{Global divergence theorems in nonlinear {PDE}s and geometry},
  \emph{Ensaios Matem\'{a}ticos [Mathematical Surveys]} \textbf{26}, Sociedade
  Brasileira de Matem\'{a}tica, Rio de Janeiro, 2014. \mr{3236356}.

\bibitem[PV21]{PV1}
\bgroup\scshape{}S.~Pigola\egroup{} and \bgroup\scshape{}G.~Veronelli\egroup{},
  \emph{${L}^{p}$ Positivity Preserving and a conjecture by {M}. {B}raverman,
  {O}. {M}ilatovic and {M}. {S}hubin}, 2021. \doi{10.48550/ARXIV.2105.14847}.

\bibitem[Pon16]{Po}
\bgroup\scshape{}A.~C. Ponce\egroup{}, \emph{Elliptic {PDE}s, measures and
  capacities}, \emph{EMS Tracts in Mathematics} \textbf{23}, European
  Mathematical Society (EMS), Z\"{u}rich, 2016, From the Poisson equations to
  nonlinear Thomas-Fermi problems. \doi{10.4171/140}.

\bibitem[PW67]{PW}
\bgroup\scshape{}M.~H. Protter\egroup{} and \bgroup\scshape{}H.~F.
  Weinberger\egroup{}, \emph{Maximum principles in differential equations},
  Prentice-Hall, Inc., Englewood Cliffs, N.J., 1967.

\bibitem[Sj73]{Sj}
\bgroup\scshape{}P.~Sj\"{o}gren\egroup{}, On the adjoint of an elliptic linear
  differential operator and its potential theory,  \emph{Ark. Mat.} \textbf{11}
  (1973), 153--165. \mr{346173}.  \doi{10.1007/BF02388513}.

\bibitem[Str83]{strichartz}
\bgroup\scshape{}R.~S. Strichartz\egroup{}, Analysis of the {L}aplacian on the
  complete {R}iemannian manifold,  \emph{J. Functional Analysis} \textbf{52}
  no.~1 (1983), 48--79. \mr{705991}.  \doi{10.1016/0022-1236(83)90090-3}.

\bibitem[Tro99]{Tr}
\bgroup\scshape{}M.~Troyanov\egroup{}, Parabolicity of manifolds,
  \emph{Siberian Adv. Math.} \textbf{9} no.~4 (1999), 125--150. \mr{1749853}.

\bibitem[CdV82]{CdV}
\bgroup\scshape{}Y.~Colin~de Verdi\`ere\egroup{}, {P}seudo-laplaciens. {I},
  \emph{Ann. Inst. Fourier (Grenoble)} \textbf{32} no.~3 (1982), xiii,
  275--286. \mr{688031}.  Available at
  \url{http://www.numdam.org.unimib.idm.oclc.org/item?id=AIF_1982__32_3_275_0}.

\end{thebibliography}

\end{document}